\newcommand{\modulo}[3]{#1\equiv#2\textrm{ }(\textrm{mod }#3)}
\newtheorem{theorem}{Theorem}[section]
\newtheorem{lemma}[theorem]{Lemma}
\newtheorem{corollary}[theorem]{Corollary}
\theoremstyle{definition}
\newtheorem{proposition}[theorem]{Proposition}
\newtheorem{example}[theorem]{Example}
\newtheorem{question}[theorem]{Question}
\theoremstyle{remark}
\newtheorem{remark}[theorem]{Remark}
\numberwithin{equation}{section}
\title[On higher congruences.II.]{On higher congruences between cusp forms and Eisenstein series. II.}
\author{Bartosz Naskręcki}
\address{Faculty of Mathematics and Computer Science, Adam Mickiewicz University\\
	Umultowska 87, 61-614 Poznań, Poland\\
	E-mail: nasqret@gmail.com}
\begin{document}
	\begin{abstract}
	We study congruences between cuspidal modular forms and Eisenstein series at levels which are square-free integers and for equal even weights. This generalizes our previous results from \cite{Naskrecki_Springer} for prime levels and provides further evidence for the sharp bounds obtained under restrictive ramification conditions. We prove an upper bound on the exponent in the general square-free situation and also discuss the existence of the congruences when the coefficients belong to the rational numbers and weight equals $2$.
\end{abstract}
	\maketitle
	\tableofcontents

\section{Introduction}

Let $N$ be a square-free positive integer and $f$ be a cuspidal newform of level $\Gamma_{0}(N)$ and even weight $k$. We consider in this paper congruences between Fourier coefficients $a_{n}(f)$ of the newform $f$ and coefficients $a_{n}(E)$ of a suitably normalized Eisenstein series $E$ of weight $k$ modulo prime powers of an ideal $\lambda$ lying in the coefficient field $K_{f}$ of $f$ and a positive integer $r$
\begin{equation}\label{eq:main_congruence}
\modulo{a_{n}(f)}{a_{n}(E)}{\lambda^{r}}
\end{equation}
for all $n\geq 0$. In the classical setting of $N$ prime and weight $k=2$ the existence of such congruences for $E=E_{2}$ was established by Mazur in \cite{Mazur_modular}. 

In this paper we focus on the extension of the results and computations of \cite{Naskrecki_Springer}. We present a summary of a large volume of computations performed with MAGMA and a construction of the algorithm that is used to find congruences of type \eqref{eq:main_congruence}. We also present a partial classification of congruences when  $\mathcal{O}_{f}=\mathbb{Q}$. The existence of the higher weight congruences for prime levels was discussed in \cite{Dummigan_Fretwell}. Sufficient conditions for existence of congruences for composite levels in weight $2$ were obtained by Yoo in \cite{Yoo_thesis}, \cite{Yoo_multiplicity}. The lower bounds on the congruence exponent in weight $2$ was discussed by Hsu \cite{Hsu} and Berger-Klosin-Kramer \cite{BKK}. Congruences on the level of Galois representations were studied recently by Billerey and Menares in \cite{Billerey_Menares_2} and \cite{Billerey_Menares_1}. Lecouturier \cite{Lecouturier} computed the rank $g_p$ of the completion of the Hecke algebra acting on cuspidal modular forms of weight $2$ and level $\Gamma_{0}(N)$ at the $p$-maximal Eisenstein ideal using the knowledge of the exponent of the Eisenstein congruence.

In \cite{Martin} Martin proved the existence of congruences between cuspidal and Eisenstein modular forms for levels which are not square-free. It would be interesting to find more computational examples of such congruences, which seem to be very sparsely distributed.

\medskip                               
With the notation of Section \ref{sec:Standard_basis} we present the main theorems of this paper.
{
	\renewcommand{\thetheorem}{\ref{corollary:bound_for_a_0_nonzero}}
\begin{theorem}
Let $p_{1},\ldots,p_{t}$ be different prime factors of $N$ a square-free integer and let $k>2$. Suppose that $f\in\mathcal{S}_{k}(N)^{\mathrm{new}}$ is a newform which is congruent to the Eisenstein eigenform $E=[p_{1}]^{+}\circ\ldots\circ[p_{t}]^{+}E_{k}\in\mathcal{E}_{k}(N)$ modulo a power $r>0$ of a maximal ideal $\lambda\subset\mathcal{O}_{f}$. If $\ell$ is the residual characteristic of $\lambda$, we obtain the bound
\[r\leq ord_{\lambda}(\ell)\cdot v_{\ell}\left(-\frac{B_{k}}{2k} \prod_{i=1}^{t}(1-p_{i})\right). \]
\end{theorem}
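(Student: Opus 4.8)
The plan is to compare $f$ and $E$ at the cusp $0$ rather than at $\infty$: the constant term of $E$ there equals, up to a power of $N$, exactly the quantity appearing in the bound. Begin with a reduction. Since $\mathrm{ord}_{\lambda}(\ell)\cdot v_{\ell}(x)=v_{\lambda}(x)$ for the normalized valuation $v_{\lambda}$ on $\mathcal{O}_{f,\lambda}$ and any $x\in\mathbb{Q}^{\times}$, the assertion is equivalent to $\lambda^{r}\mid\frac{B_{k}}{2k}\prod_{i=1}^{t}(1-p_{i})$. Let $W_{N}$ be the Fricke involution on $M_{k}(\Gamma_{0}(N))$ (for square-free $N$, the composite of the $W_{p_{i}}$), and let $V_{m}$ be the operator $g(z)\mapsto g(mz)$. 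Using the description of $[p_{i}]^{+}$ recorded in Section~\ref{sec:Standard_basis}, namely $E=\prod_{i=1}^{t}\bigl(1-p_{i}^{k-1}V_{p_{i}}\bigr)E_{k}$, together with $(V_{d}E_{k})\,|\,W_{N}=N^{k/2}d^{-k}V_{N/d}E_{k}$ for $d\mid N$ (a one-line computation from $E_{k}(-1/u)=u^{k}E_{k}(u)$) and the fact that every $V_{m}E_{k}$ has constant term $-B_{k}/2k$, one obtains, after factoring the resulting sum over divisors of $N$ as $\prod_{i}(1-p_{i}^{-1})=N^{-1}\prod_{i}(p_{i}-1)$,
\[
 a_{0}\bigl(E\,|\,W_{N}\bigr)=(-1)^{t}\,N^{k/2-1}\Big(-\tfrac{B_{k}}{2k}\prod_{i=1}^{t}(1-p_{i})\Big).
\]

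It remains to show that \eqref{eq:main_congruence} is preserved by $W_{N}$. This needs $\ell\nmid N$, which follows from \eqref{eq:main_congruence} at $n=p_{i}$: one has $a_{p_{i}}(E)=1$ (the $U_{p_{i}}$-eigenvalue of the stabilization $[p_{i}]^{+}$) and $a_{p_{i}}(f)=-w_{p_{i}}(f)\,p_{i}^{k/2-1}$ with $w_{p_{i}}(f)=\pm1$, by Atkin--Lehner--Li theory for the newform $f$ of square-free level $N$; if $\lambda\mid p_{i}$ then, since $k/2-1\geq 1$ (this is where $k>2$ is used), we would get $0\equiv a_{p_{i}}(f)\equiv a_{p_{i}}(E)=1\pmod{\lambda}$, a contradiction. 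With $\gcd(\ell,N)=1$, the operator $W_{N}$ is defined over $\mathbb{Z}[1/N]\subseteq\mathcal{O}_{f,\lambda}$ and preserves the lattice of integral modular forms; since \eqref{eq:main_congruence}, read over all $n$, places $f-E$ in $\lambda^{r}M_{k}(\Gamma_{0}(N);\mathcal{O}_{f,\lambda})$ by the $q$-expansion principle, the same holds for $(f-E)\,|\,W_{N}$, so $a_{n}(f\,|\,W_{N})\equiv a_{n}(E\,|\,W_{N})\pmod{\lambda^{r}}$ for every $n$. Taking $n=0$ and using $a_{0}(f\,|\,W_{N})=0$ (as $f\,|\,W_{N}$ is again a cusp form) gives $\lambda^{r}\mid a_{0}(E\,|\,W_{N})$.

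Combining this with the displayed formula and $\gcd(\ell,N)=1$ yields $\lambda^{r}\mid\frac{B_{k}}{2k}\prod_{i}(1-p_{i})$, as required. The step I expect to be delicate is this last ``congruence at the cusp $0$'': the appeal to the $q$-expansion principle and to the $\mathbb{Z}[1/N]$-integrality of $W_{N}$, in particular when $\ell\in\{2,3\}$, where $X_{0}(N)$ has elliptic points — this is dealt with in the usual way, by passing to $\Gamma_{1}(N)$ (on which $f$ and $E$ still live and the Atkin--Lehner operator still acts) or by citing the known integrality of Atkin--Lehner operators away from the level. Everything else — the explicit constant-term computation and the valuation bookkeeping — is routine. (Note that using \eqref{eq:main_congruence} only at $n=0$ would give the weaker bound with $\prod_{i}(1-p_{i}^{k-1})$ in place of $\prod_{i}(1-p_{i})$; one can pass from $p_{i}^{k-1}$ to $p_{i}$ modulo $\lambda^{r}$ via $p_{i}^{k-2}\equiv1$ coming from the $n=p_{i}$ congruence, but that route recovers the stated bound only when $B_{k}/2k$ is $\lambda$-integral, whereas the cusp-$0$ argument is uniform.)
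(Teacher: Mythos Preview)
Your argument is correct and takes a genuinely different route from the paper's. The paper works entirely at the cusp $\infty$: from the congruence at $n=p_i$ it squares $a_{p_i}(f)\equiv 1$ (the same Atkin--Lehner input you use) to get $p_i^{k-2}\equiv 1\pmod{\lambda^r}$, then writes $1-p_i^{k-1}=(1-p_i^{k-2})+p_i^{k-2}(1-p_i)$ and substitutes into the $n=0$ congruence $-\tfrac{B_k}{2k}\prod_i(1-p_i^{k-1})\equiv 0\pmod{\lambda^r}$ to reach the stated bound. That is completely elementary --- no $q$-expansion principle, no integral models, no Fricke involution. Your approach instead transports the congruence to the cusp $0$ via $W_N$, where the constant term already carries the factor $\prod_i(1-p_i)$; the price is the (standard but non-trivial) input that $W_N$ preserves $\lambda$-integrality once $\ell\nmid N$, which you rightly flag as the delicate step. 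What you buy is exactly the uniformity you describe in your closing parenthetical: the paper's substitution implicitly passes from $A\equiv B\pmod{\lambda^r}$ to $cA\equiv cB\pmod{\lambda^r}$ with $c=-B_k/(2k)$, and this is only valid when $\mathrm{ord}_\lambda(c)\geq 0$; when $(\ell-1)\mid k$ and $\ell\nmid 2k$ one has $\mathrm{ord}_\lambda(c)<0$ and the paper's argument as written loses a term, whereas your cusp-$0$ computation gives the bound directly.
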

\addtocounter{theorem}{-1}
}
The upper bound predicted by the theorem above is discussed numerically in Section \ref{sec:Numerical_data}. It turns out that in general it is optimal but in the case when the ideal $\lambda$ is ramified above $\ell$ the ramification degree $ord_{\lambda}(\ell)$ seems be the right upper bound in most cases.

When we turn to the eigenforms with weight $2$ and rational coefficients the existence of the congruences between cuspidal newforms and Eisenstein series is limited to only finitely many prime powers. In this case we investigated modular forms of levels with two prime factors.
{
	\renewcommand{\thetheorem}{\ref{thm:congruences_over_Q}}
\begin{theorem}
Let $p$,$q$ be two different primes. Suppose that $f\in\mathcal{S}_{2}(pq)^{\mathrm{new}}$ is a newform with rational coefficients and let $E$ be an eigenform in $\mathcal{E}_{2}(pq)$. Let $\ell$ be a prime number and $r>0$ an integer such that the congruence (\ref{equation:rational_congruences}) holds for all $n\geq 0$. Then one of two conditions holds:
\begin{itemize}
\item[(1)] $\ell^{r}\in\{2,3,4,5\}$ or
\item[(2)] $\ell^{r}=7$ and $E=[13]^{-}[2]^{+}E_{2}$.
\end{itemize}
\end{theorem}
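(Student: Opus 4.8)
The plan is to translate the statement into the arithmetic of elliptic curves and then cut it down to a finite verification. Since $\mathcal{O}_{f}=\mathbb{Q}$ and $k=2$, the Eichler--Shimura construction attaches to $f$ an isogeny class of elliptic curves over $\mathbb{Q}$ of conductor exactly $pq$; fix a representative $A/\mathbb{Q}$, which is semistable because $pq$ is square-free, so that $a_{p}(A),a_{q}(A)\in\{1,-1\}$. A congruence $\modulo{a_{n}(f)}{a_{n}(E)}{\ell^{r}}$ holding for all $n$ says precisely that the Hecke eigensystem of $f$ agrees modulo $\ell^{r}$ with that of the Eisenstein eigenform $E$. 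Evaluating this at $n$ coprime to $\ell pq$ gives, by the weight-$2$ analogue of Corollary~\ref{corollary:bound_for_a_0_nonzero}, a divisibility $\ell^{r}\mid n_{E}(pq)$ with $n_{E}(pq)$ an explicit integer built from $p$, $q$ and the Atkin--Lehner sign pattern of $E$; evaluating it at $n=p$ and at $n=q$ produces the bad-prime congruences $a_{p}(A)\equiv a_{p}(E)$ and $a_{q}(A)\equiv a_{q}(E)$ modulo $\ell^{r}$, where $a_{p}(E)\in\{1,p\}$ according to the sign of $E$ at $p$.

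First I would bound the prime $\ell$. Reducing modulo $\ell$ shows that the semisimplification of $\overline{\rho}_{A,\ell}$ agrees with $\mathbf{1}\oplus\overline{\chi}_{\ell}$, with $\overline{\chi}_{\ell}$ the mod-$\ell$ cyclotomic character, so $\overline{\rho}_{A,\ell}$ is reducible; a short Chebotarev argument (using that a finite group is never the union of two proper subgroups) forces its two diagonal characters to be exactly $\mathbf{1}$ and $\overline{\chi}_{\ell}$. Hence $A$ has a rational point of order $\ell$, or else the isogenous curve $A/\mu_{\ell}$ does, and Mazur's theorem on torsion of elliptic curves over $\mathbb{Q}$ gives $\ell\in\{2,3,5,7\}$. (Alternatively one may invoke Mazur's classification of rational isogenies of prime degree and discard $\ell\geq 11$ by checking the finitely many noncuspidal rational points of $X_{0}(\ell)$, none of which is a semistable curve of conductor a product of two primes.)

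Next I would pin down the prime power $\ell^{r}$. Through the structure theory of the Eisenstein ideal for rational $f$ — the cuspidal divisor attached to $E$ is torsion in $J_{0}(pq)$ by Manin--Drinfeld and maps to a point of order divisible by $\ell^{r}$ in the optimal curve of the isogeny class of $A$ (cf.~\cite{Mazur_modular}) — one obtains a rational point of order $\ell^{r}$ on that optimal curve, so Mazur's theorem gives $\ell^{r}\in\{2,3,4,5,7,8,9\}$, excluding $16$, $25$, $27$ at once. For each of the values $\ell^{r}\in\{7,8,9\}$ I would enumerate the (finitely many) semistable elliptic curves of conductor a product of two primes carrying a rational point of order $\ell^{r}$, and for each run through the admissible Eisenstein series $E$, testing the congruence against $\ell^{r}\mid n_{E}(pq)$ together with the bad-prime congruences $a_{p}(A)=\pm1\equiv a_{p}(E)\in\{1,p\}\pmod{\ell^{r}}$: for $\ell^{r}=8$ and $\ell^{r}=9$ no pair survives, while for $\ell^{r}=7$ the only survivor is the class $26\mathrm{b}$ of conductor $2\cdot 13$, for which $E=[13]^{-}[2]^{+}E_{2}$. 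This is conclusion~(2); all remaining admissible values are $\ell^{r}\in\{2,3,4,5\}$, which is conclusion~(1). The explicit part of this enumeration is exactly the kind of computation recorded in Section~\ref{sec:Numerical_data}.

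The step I expect to be the main obstacle is making the dictionary of the first paragraph fully rigorous: passing from "the Hecke eigensystems of $f$ and $E$ agree modulo $\ell^{r}$" to a concrete torsion-and-isogeny statement about $A$ which also controls the behaviour at the two bad primes and at the cusp $\infty$ — recall that Corollary~\ref{corollary:bound_for_a_0_nonzero} carries the hypothesis $a_{0}\neq 0$, so the degenerate cusp needs separate treatment. The subcase $\ell=2$ is the most delicate: the mod-$2$ representation carries the least information, and the excluded value $\ell^{r}=8$ sits just outside the permitted range $\{2,4\}$, so its exclusion genuinely uses the bad-prime congruences and not merely Mazur's torsion bound.
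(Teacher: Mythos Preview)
Your route is correct in spirit but genuinely different from, and heavier than, the paper's. The paper never invokes Galois representations or Mazur's torsion theorem at the decisive step; instead it evaluates the congruence at the smallest prime $n$ coprime to $N$. If $N=pq$ is odd one takes $n=2$: then $\modulo{a_{2}(f)}{\sigma_{1}(2)=3}{\ell^{r}}$, while the Hasse--Weil bound gives $|a_{2}(f)|\leq 2\sqrt{2}<3$, so the positive integer $3-a_{2}(f)\leq 5$ is divisible by $\ell^{r}$ and conclusion~(1) follows at once. If $N$ is even then (since $\mathcal{S}_{2}(6)=0$) one has $N=2q$ with $q>3$, takes $n=3$, and the same argument yields $\ell^{r}\leq 7$; only the residual case $\ell^{r}=7$ then requires Sadek's classification \cite{Sadek_torsion} to force $N=26$, followed by a direct inspection of the two newforms there against the three Eisenstein eigenforms. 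What this buys is that the cases $\ell^{r}\in\{8,9\}$ --- precisely the ones you single out as the main obstacle, with the delicate $\ell=2$ analysis and the finite enumeration of curves --- are eliminated by a one-line inequality rather than by any case check.

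One step of your outline also deserves a caution. The passage from the mod-$\ell^{r}$ congruence to a rational point of order $\ell^{r}$ via ``the cuspidal divisor attached to $E$ maps to a point of order divisible by $\ell^{r}$ in the optimal curve'' is not justified as written: that divisibility is itself a statement about the exponent of the Eisenstein ideal and is close to what you are trying to establish. The clean route --- and the one actually used in the lemma immediately preceding this theorem --- is Katz's local--global criterion \cite{Katz_Torsion}: the congruence gives $\ell^{r}\mid(1+n-a_{n}(f))=|A(\mathbb{F}_{n})|$ for every good prime $n$, whence some $\mathbb{Q}$-isogenous curve carries a rational point of order $\ell^{r}$, and then Mazur applies directly.
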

\addtocounter{theorem}{-1}
}
Our numerical results discussed in Section \ref{sec:Computational_results} lead to certain further speculations about the upper bound for the congruences of type \eqref{eq:main_congruence}.

\begin{question}
	Suppose that the congruence \eqref{eq:main_congruence} holds for a prime ideal $\lambda$ above a rational prime $\ell$ and an exponent $r>0$. Let $e$ denote the order $ord_{\lambda}(\ell)$ and suppose that $e>1$. For $k=2$ and level $N$ prime we checked that $r\leq e$ for every prime $\ell>3$ for $N\leq 13009$. In a similar fashion, for any square-free integer $N$ and a weight $k$ described in Table~\ref{table:range} the same conclusion holds except for two counterexamples found only in weight $2$, described in Table~\ref{table:exceptional_ramification}. Based on those observations we ask the following two questions:
	\begin{itemize}
		\item Assume that $e>1$, the level $N$ is prime and weight $k=2$. Is it true that for every prime number $\ell>3$  a congruence of type \eqref{eq:main_congruence} satisfies the condition $r\leq e$?
		\item Assume that $e>1$, the level $N$ is square-free and weight $k\geq 2$ is even. Is it true that for \textit{every fixed} prime number $\ell$ there are only finitely many congruences of type \eqref{eq:main_congruence} which satisfy the condition $r>e$?
	\end{itemize}
The questions above are discussed in detail in Section \ref{sec:Computational_results}.
\end{question}

\subsection*{Summary of the paper} In Section \ref{sec:Standard_basis} we describe a standard basis for the Eisenstein subspace of modular forms of square-free level that consists of the eigenforms. The material in this section is rather classical but we did not find a convenient reference which contained all the necessary results. In Section \ref{sec:Upper_bound_of_congruences} we prove Theorem \ref{corollary:bound_for_a_0_nonzero} using the results of Atkin-Lehner from \cite{Atkin_Lehner} as our main tool. In Section \ref{sec:Rational_cong} we study the congruence between cuspidal eigenforms of weight $2$ and with rational coefficients and Eisenstein eigenforms. A result of Katz \cite{Katz_Torsion} and the theorem of Mazur \cite{Mazur_modular} allow us to conclude that there are only finitely many prime powers for which the congruences exist. A refined statement is proved in Theorem \ref{thm:congruences_over_Q}. In Sections \ref{sec:Algorithms}, \ref{sec:Numerical_data}, \ref{sec:Computational_results} we described an improved version of the algorithm that finds congruences of desired shape for fixed levels and weights (cf. \cite{Naskrecki_Springer}). We then discuss the numerical results contained in the attached tables and formulate some of them as corollaries from the computations. A complete database of congruences is available on request.

\subsection*{Notation}
Let $B_{k}$ denote the $k$-th Bernoulli with $B_{1}=-\frac{1}{2}$ and let $\sigma_{k-1}(n)=\sum_{m\mid n} m^{k-1}$ denote the divisor function for any integer $k\geq 2$. Let $E_{k}=-B_{k}/(2k)+\sum_{n=1}^{\infty}\sigma_{k-1}(n)q^n$ denote the Eisenstein series of weight $k$ and level $1$, where $q=\exp(2\pi i \tau)$ for $\tau$ in the upper half-plane $\mathcal{H}$. Let $$\Gamma_{0}(N)=\left\{ \left(\begin{array}{cc}
a & b\\ c & d
\end{array}\right)\in \textrm{SL}_{2}(\mathbb{Z}): N\mid c\right\}$$ denote the Hecke congruence subgroup of level $N$ and $\mathcal{M}_{k}(N)$ the space of modular forms of weight $k$ and level $N$ with respect to the group $\Gamma_{0}(N)$. Let $\mathcal{S}_{k}(N)$, $\mathcal{E}_{k}(N)$ and $\mathcal{S}_{k}(N)^{\mathrm{new}}$ denote respectively the subspace of cusp forms, the subspace of Eisenstein series and the subspace of newforms in $\mathcal{M}_{k}(N)$. On $\mathcal{M}_{k}(N)$ we have the action of the Hecke algebra $\mathbb{T}_{N}$ where $T_{p}$ is the Hecke operator with index $p$, $p\nmid N$ and $U_{p}$ if $p\mid N$. Let $a_{n}(f)$ denote the $n$-th Fourier coefficient of $f$ expanded at infinity. 
\section{Standard basis of Eisenstein eigenforms}\label{sec:Standard_basis}
In this section we present a convenient basis of Eisenstein eigenforms in $\mathcal{E}_{k}(N)$ for all $k\geq 2$ with respect to the Hecke algebra $\mathbb{T}_{N}$. We believe that the presented material is not new, however due to a lack of complete reference we present full proofs here. Let us denote by $A_{d}$ a linear map from $\mathcal{M}_{k}(N)$ to $\mathcal{M}_{k}(Nd)$  such that $A_{d}:f(\tau)\mapsto f(d\tau)$. The operator $A_{d}$ is just a normalized slash operator $A_{d}(f)=d^{1-k} f\mid_{k}\gamma$ where
\[\gamma=\left(\begin{array}{cc}
d & 0 \\
0 & 1
\end{array}\right).\]
We quote now a theorem of Atkin-Lehner which will be used at several places.
\begin{lemma}[{\cite[Lemma 15]{Atkin_Lehner}}]
Let $f$ be a modular form in $\mathcal{M}_{k}(N)$. We have the following relation between different Hecke operators acting on $f$
\begin{align}
(T_{q}\circ U_{p})(f)=(U_{p}\circ T_{q})(f)& \quad \textrm{for }p\neq q,\label{align:atkin_prop1}\\
(T_{q}\circ A_{d})(f)=(A_{d}\circ T_{q})(f)& \quad \textrm{for }(q,d)=1,\label{align:atkin_prop2}\\
(U_{q}\circ A_{d})(f)=(A_{d}\circ U_{q})(f)& \quad \textrm{for }(q,d)=1.\label{align:atkin_prop3}
\end{align}
\end{lemma}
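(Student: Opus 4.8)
The plan is to reduce all three identities to elementary manipulations of Fourier expansions at $\infty$. Each of the operators $T_q$ (for $q\nmid N$), $U_p$ (for $p\mid N$) and $A_d$ carries modular forms to modular forms, and a modular form is determined by its $q$-expansion, so it suffices to compare Fourier coefficients on both sides. Writing $f=\sum_{n\ge 0}a_n q^n$ and using the convention $a_m=0$ whenever $m\notin\mathbb{Z}_{\ge 0}$, one has
\[
T_q f=\sum_{n}\bigl(a_{qn}+q^{k-1}a_{n/q}\bigr)q^n,\qquad
U_p f=\sum_{n}a_{pn}\,q^n,\qquad
A_d f=\sum_{n}a_{n/d}\,q^n .
\]

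First I would check \eqref{align:atkin_prop1}. The coefficient of $q^n$ in $(T_q\circ U_p)(f)$ is $a_{pqn}+q^{k-1}a_{pn/q}$, where the last term is present only when $q\mid n$; the coefficient of $q^n$ in $(U_p\circ T_q)(f)$ is $a_{qpn}+q^{k-1}a_{pn/q}$, where the last term is present only when $q\mid pn$. Since $q$ is prime and $q\neq p$, we have $q\mid pn\iff q\mid n$, so the two coincide. The same method yields \eqref{align:atkin_prop3}: the coefficient of $q^n$ in $(U_q\circ A_d)(f)$ is $a_{qn/d}$ (present iff $d\mid qn$), while that in $(A_d\circ U_q)(f)$ is $a_{q(n/d)}$ (present iff $d\mid n$), and $(q,d)=1$ forces $d\mid qn\iff d\mid n$. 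Finally, for \eqref{align:atkin_prop2} a direct expansion of both compositions shows that $(T_q\circ A_d)(f)$ and $(A_d\circ T_q)(f)$ have the same coefficient of $q^n$, namely
\[
[\,d\mid n\,]\,a_{qn/d}\;+\;q^{k-1}\,[\,dq\mid n\,]\,a_{n/(dq)},
\]
where $[\,\cdot\,]$ denotes the indicator of the displayed divisibility condition; here $(q,d)=1$ is used to rewrite $d\mid qn$ as $d\mid n$ and to note that ``$d\mid n$ together with $q\mid n/d$'' is equivalent to $dq\mid n$.

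The computations are routine; the only points that need attention are the bookkeeping of the divisibility conditions attached to the ``divided'' indices $a_{n/q}$ and $a_{n/d}$, and keeping track of the level, since $A_d$ raises the level from $N$ to $Nd$ — so one should observe at the outset that in each of the three identities the two compositions are applied to forms lying in the same space $\mathcal{M}_k(\,\cdot\,)$, after which equality of $q$-expansions gives equality of forms. I do not expect a genuine obstacle beyond this; the original Atkin--Lehner argument via double cosets could be used instead, but the coefficient computation above is shorter and self-contained.
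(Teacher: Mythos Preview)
Your argument is correct: comparing Fourier coefficients at $\infty$ and using the coprimality hypotheses to match the divisibility conditions on the ``divided'' indices establishes all three identities, and your remark about keeping track of the level so that equality of $q$-expansions implies equality of forms is the only point needing care.

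There is nothing in the paper to compare against: the lemma is quoted from Atkin--Lehner \cite[Lemma~15]{Atkin_Lehner} and no proof is given here. For what it is worth, the original Atkin--Lehner proof is essentially the same coefficient computation you carry out (phrased in their slightly different operator notation), so your approach is not genuinely different from the cited source either. One minor remark: in \eqref{align:atkin_prop2} you are implicitly assuming $q\nmid N$ so that $T_q$ (rather than $U_q$) is the correct operator on both $\mathcal{M}_k(N)$ and $\mathcal{M}_k(Nd)$; this is how the lemma is used later in the paper, but it would do no harm to make the hypothesis explicit.
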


For any $k>2$ the series $E_{k}$ is an eigenform in $\mathcal{M}_{k}(1)$ with respect to the Hecke algebra $\mathbb{T}_{1}$. In particular for any $T_{n}$ acting on $\mathcal{M}_{k}(1)$ for $k>2$ we have
\begin{equation}\label{equation:T_n_on_E_k}
T_{n}(E_{k})=a_{n}(E_{k})E_{k}=\sigma_{k-1}(n)E_{k}. 
\end{equation}
We also record three simple identities related to $\sigma_{k}$ functions. Let $n$ be a positive integer and $p$ a prime number such that $p\mid n$. For any $k\geq 2$ we have
\begin{align}
\sigma_{k-1}(np)+p^{k-1}\sigma_{k-1}(n/p)&=\sigma_{k-1}(p)\sigma_{k-1}(n),\label{align:sigma1}\\
\sigma_{k-1}(n)-p^{k-1}\sigma_{k-1}(n/p)&=\sigma_{k-1}(np)-p^{k-1}\sigma_{k-1}(n),\label{align:sigma2}\\
\sigma_{k-1}(np)-\sigma_{k-1}(n)&=p^{k-1}(\sigma_{k-1}(n)-\sigma_{k-1}(n/p)).\label{align:sigma3}
\end{align}
For a fixed positive integer $d$ we define two additional linear operators
\begin{align*}
[d]^{+}:=T_{1}-d^{k-1}A_{d} &: \mathcal{M}_{k}(N)\rightarrow\mathcal{M}_{k}(Nd), \\
[d]^{-}:=T_{1}-A_{d} &: \mathcal{M}_{k}(N)\rightarrow\mathcal{M}_{k}(Nd).
\end{align*}
where $T_{1}$ is the natural inclusion of $\mathcal{M}_{k}(N)$ into $\mathcal{M}_{k}(Nd)$.

\begin{proposition}\label{proposition:bracket_operators_commute}
If $d,e$ are two positive integers and $\delta,\epsilon\in\{+,-\}$, then operators $[d]^{\delta}$ and $[e]^{\epsilon}$ commute.
\end{proposition}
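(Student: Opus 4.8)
The plan is to reduce the commutation of $[d]^{\delta}$ and $[e]^{\epsilon}$ to the commutation of the building blocks $T_1$ (the inclusion) and the maps $A_m$, together with the Atkin--Lehner relations quoted above. Writing out $[d]^{\delta}\circ[e]^{\epsilon}$ and $[e]^{\epsilon}\circ[d]^{\delta}$, each is a linear combination (with coefficients built from $d^{k-1}$, $e^{k-1}$ depending on the signs $\delta,\epsilon$) of the four composites $T_1\circ T_1$, $T_1\circ A_e$ resp. $A_e\circ T_1$, $A_d\circ T_1$ resp. $T_1\circ A_d$, and $A_d\circ A_e$ resp. $A_e\circ A_d$. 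The coefficients match up term-by-term after I verify the three commutation facts $T_1\circ A_m=A_m\circ T_1$, $A_d\circ A_e=A_{de}=A_e\circ A_d$, and that $T_1$ commutes with itself, all on the nose as maps $\mathcal{M}_k(N)\to\mathcal{M}_k(Nde)$.

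The first step is to observe that $A_d\circ A_e=A_{de}$: directly from the definition $A_m\colon f(\tau)\mapsto f(m\tau)$ one gets $(A_d\circ A_e)(f)(\tau)=f(de\tau)=A_{de}(f)(\tau)$, and similarly $A_e\circ A_d=A_{de}$, so in particular $A_d$ and $A_e$ commute. Next, the inclusion $T_1$ from $\mathcal{M}_k(M)$ to $\mathcal{M}_k(M')$ for $M\mid M'$ simply regards a form of level $M$ as a form of the larger level without altering its $q$-expansion, so it commutes with any $A_m$ (both sides send $f(\tau)$ to $f(m\tau)$ viewed at the appropriate level) and with itself; these are immediate from the Fourier expansions. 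Then I expand:
\[
[d]^{\delta}\circ[e]^{\epsilon}=(T_1-c_d A_d)\circ(T_1-c_e A_e)=T_1-c_e A_e-c_d A_d+c_d c_e\, A_d\circ A_e,
\]
where $c_d\in\{1,d^{k-1}\}$ according to $\delta\in\{-,+\}$ and likewise $c_e$; using $A_d\circ A_e=A_{de}=A_e\circ A_d$ and the fact that $T_1$, $A_d$, $A_e$ pairwise commute, the analogous expansion of $[e]^{\epsilon}\circ[d]^{\delta}$ yields the identical expression $T_1-c_d A_d-c_e A_e+c_d c_e\, A_{de}$. Hence the two composites agree.

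There is no real obstacle here: the only mild subtlety is bookkeeping of the level changes, since $[d]^{\delta}$ raises the level from $N$ to $Nd$, so to even make sense of $[d]^{\delta}\circ[e]^{\epsilon}$ one must note that both compositions land in $\mathcal{M}_k(Nde)$ and that the relevant instances of $A_m$ and $T_1$ are taken between the correct pairs of levels; once this is set up, the associativity and commutativity of the $A_m$'s and the inclusions make everything routine. One could also phrase the whole argument by noting that all the operators in sight act diagonally on $q$-expansions in the sense that $A_m$ multiplies the $n$-th coefficient's index, and then commutativity is visible directly on Fourier coefficients; but the operator-level computation above is cleaner to write. I would also remark that the Atkin--Lehner relations \eqref{align:atkin_prop2} and \eqref{align:atkin_prop3} are the conceptual source of why such bracket operators interact well with Hecke operators, and will be reused later, even though for this particular proposition only the trivial commutation of the $A_m$'s with each other and with $T_1$ is needed.
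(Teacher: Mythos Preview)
Your proof is correct and follows essentially the same approach as the paper: the paper's proof is the single sentence ``By definition the operators $A_{d}$ and $A_{e}$ commute, so the proposition follows,'' and your argument simply unpacks this observation in full detail by expanding the bracket operators and using $A_d\circ A_e=A_{de}=A_e\circ A_d$ together with the triviality of $T_1$.
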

\begin{proof}
By definition the operators $A_{d}$ and $A_{e}$ commute, so the proposition follows.
\end{proof}

We compute the action of $U_{p}$ and $[p]^{\pm}$ on $E_{k}$ explicitly. We adopt the convention that $\sigma_{k-1}(r)=0$ for any $r\in\mathbb{Q}\setminus\mathbb{Z}$.
\begin{lemma}
Let $k >2$ and $p$ be a prime number. We have equalities
\begin{align}
U_{p}([p]^{+}E_{k})&=[p]^{+}E_{k},\label{align:U_p_compose_p_plus}\\
U_{p}([p]^{-}E_{k})&=p^{k-1} [p]^{-}E_{k}.
\end{align}
\end{lemma}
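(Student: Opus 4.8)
The plan is to deduce both equalities from two elementary operator identities together with the eigenform relation \eqref{equation:T_n_on_E_k}. The two identities are $U_{p}\circ A_{p}=\mathrm{id}$ and the classical decomposition $T_{p}=U_{p}+p^{k-1}A_{p}$ of the level-one Hecke operator; both are immediate on $q$-expansions, since for $g=\sum_{n\geq 0}a_{n}(g)q^{n}$ one has $a_{m}(A_{p}g)=a_{m/p}(g)$ (with the convention that a coefficient indexed by a non-integer is $0$) and $a_{m}(T_{p}g)=a_{pm}(g)+p^{k-1}a_{m/p}(g)$, whence $a_{m}(U_{p}A_{p}g)=a_{pm}(A_{p}g)=a_{m}(g)$.

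First I would apply $T_{p}=U_{p}+p^{k-1}A_{p}$ to $E_{k}$ and use \eqref{equation:T_n_on_E_k} with $\sigma_{k-1}(p)=1+p^{k-1}$ to obtain
\[
U_{p}E_{k}=T_{p}E_{k}-p^{k-1}A_{p}E_{k}=(1+p^{k-1})E_{k}-p^{k-1}A_{p}E_{k}.
\]
Both claimed equalities then follow by linearity of $U_{p}$ and $U_{p}A_{p}E_{k}=E_{k}$. Since $[p]^{+}E_{k}=E_{k}-p^{k-1}A_{p}E_{k}$,
\[
U_{p}([p]^{+}E_{k})=U_{p}E_{k}-p^{k-1}U_{p}A_{p}E_{k}=\bigl((1+p^{k-1})E_{k}-p^{k-1}A_{p}E_{k}\bigr)-p^{k-1}E_{k}=E_{k}-p^{k-1}A_{p}E_{k}=[p]^{+}E_{k},
\]
and since $[p]^{-}E_{k}=E_{k}-A_{p}E_{k}$,
\[
U_{p}([p]^{-}E_{k})=U_{p}E_{k}-U_{p}A_{p}E_{k}=\bigl((1+p^{k-1})E_{k}-p^{k-1}A_{p}E_{k}\bigr)-E_{k}=p^{k-1}\bigl(E_{k}-A_{p}E_{k}\bigr)=p^{k-1}[p]^{-}E_{k}.
\]

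There is no serious obstacle; the only point that needs care is bookkeeping of levels, since $U_{p}$ and $A_{p}$ send a level-one form into $\mathcal{M}_{k}(\Gamma_{0}(p))$, so the two operator identities above should be read in $\mathcal{M}_{k}(\Gamma_{0}(p))$ (equivalently, as identities of $q$-expansions), with $T_{p}E_{k}$ on the right being the level-one value $(1+p^{k-1})E_{k}$ included into $\mathcal{M}_{k}(\Gamma_{0}(p))$. As an alternative I could verify both equalities directly on Fourier coefficients: the constant terms match at once, and for $n\geq 1$ the needed relations $a_{pn}([p]^{+}E_{k})=a_{n}([p]^{+}E_{k})$ and $a_{pn}([p]^{-}E_{k})=p^{k-1}a_{n}([p]^{-}E_{k})$ are exactly the identities \eqref{align:sigma2} and \eqref{align:sigma3} when $p\mid n$, and for $p\nmid n$ reduce to $\sigma_{k-1}(pn)=(1+p^{k-1})\sigma_{k-1}(n)$, which follows from multiplicativity of $\sigma_{k-1}$ and $\sigma_{k-1}(p)=1+p^{k-1}$.
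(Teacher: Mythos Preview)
Your primary argument via the operator identities $U_{p}\circ A_{p}=\mathrm{id}$ and $T_{p}=U_{p}+p^{k-1}A_{p}$, combined with \eqref{equation:T_n_on_E_k}, is correct and slightly slicker than the paper's proof. The paper instead works coefficient-by-coefficient: it writes out $a_{n}(U_{p}[p]^{+}E_{k})=\sigma_{k-1}(np)-p^{k-1}\sigma_{k-1}(n)$ and $a_{n}([p]^{+}E_{k})=\sigma_{k-1}(n)-p^{k-1}\sigma_{k-1}(n/p)$ and then invokes the pre-recorded identity \eqref{align:sigma2} (and \eqref{align:sigma3} for the $[p]^{-}$ case). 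This is exactly the alternative you sketch at the end. Your operator route bypasses those $\sigma$-identities entirely and makes the eigenvalues $1$ and $p^{k-1}$ drop out of a single computation of $U_{p}E_{k}$; the paper's route is more explicit but needs the divisor-sum relations prepared in advance.
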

\begin{proof}
Fix the integer $k>2$ and a prime $p$. We denote by $F$ the form $[p]^{+}E_{k}$, it lies in $\mathcal{M}_{k}(p)$. The $n$-th Fourier coefficient of $U_{p}F$ is as follows
\[a_{n}(U_{p}F)=a_{np}(F)=a_{np}(E_{k}-p^{k-1}A_{p}E_{k})=a_{np}(E_{k})-p^{k-1}a_{n}(E_{k}).\]
From the definition of the series $E_{k}$ we finally obtain
\[a_{n}(U_{p}F)=\sigma_{k-1}(np)-p^{k-1}\sigma_{k-1}(n).\]
On the other hand, the $n$-th Fourier coefficient of $F$ is equal to
\[\sigma_{k-1}(n)-p^{k-1}\sigma_{k-1}(n/p).\]
An application of the identity (\ref{align:sigma2}) shows that $U_{p}F=F$. 

\medskip\noindent
A similar reasoning combined with equation (\ref{align:sigma3}) proves the second statement of the lemma.
\end{proof}

For a square-free level $N$ we can now express the action of the Hecke algebra on a specific Eisenstein eigenform.
\begin{lemma}\label{lemma:eigenforms_Eisenstein}
Let $k>2$. Fix a positive integer $t$ and a non-negative integer $t\geq r$ and distinct prime numbers $p_{1},\ldots,p_{t}$. Let $N$ be a product of those primes. The form
\[E=[p_{1}]^{+}\circ\ldots\circ [p_{r}]^{+}\circ [p_{r+1}]^{-}\circ\ldots\circ [p_{t}]^{-} E_{k}\in\mathcal{E}_{k}(\Gamma_{0}(N))  \]
is an eigenform with respect to $\mathbb{T}_{N}$. Explicitly, the generators act as follows
\begin{alignat*}{3}
T_{n}E &=\sigma_{k-1}(n)E, &&\quad(n,N)=1\\
U_{p_{i}}E &=E, &&\quad 1\leq i\leq r\\
U_{p_{i}}E &=p_{i}^{k-1}E, &&\quad r+1\leq i\leq t
\end{alignat*}
\end{lemma}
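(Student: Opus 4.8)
The plan is to reduce everything to the three Atkin--Lehner commutation relations in the first lemma together with the two computations $U_{p}([p]^{+}E_{k})=[p]^{+}E_{k}$ and $U_{p}([p]^{-}E_{k})=p^{k-1}[p]^{-}E_{k}$ already established, and the base fact \eqref{equation:T_n_on_E_k} that $E_{k}$ is a $\mathbb{T}_{1}$-eigenform. First I would observe that, by Proposition~\ref{proposition:bracket_operators_commute}, the bracket operators all commute with one another, so the form $E$ is well-defined regardless of the order in which the $[p_{i}]^{\pm}$ are applied; this flexibility will be used repeatedly to push a given bracket operator to the outside.

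Next I would verify the action of $T_{n}$ for $(n,N)=1$. It suffices to treat $T_{q}$ for a prime $q\nmid N$, since these generate the away-from-$N$ Hecke operators and the eigenvalue $\sigma_{k-1}$ is multiplicative in the appropriate sense (more precisely, once each $T_{q}$ acts as the scalar $\sigma_{k-1}(q)$, the standard recursions force $T_{n}E=\sigma_{k-1}(n)E$ for all $n$ coprime to $N$). Each $[p_{i}]^{\pm}$ is built from $T_{1}$ (an inclusion, which commutes with $T_{q}$) and $A_{p_{i}}$; since $q\neq p_{i}$ we have $(q,p_{i})=1$, so \eqref{align:atkin_prop2} gives $T_{q}\circ A_{p_{i}}=A_{p_{i}}\circ T_{q}$, hence $T_{q}$ commutes with every $[p_{i}]^{\pm}$. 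Therefore $T_{q}E$ equals $[p_{1}]^{+}\circ\cdots\circ[p_{t}]^{-}(T_{q}E_{k})=\sigma_{k-1}(q)\,[p_{1}]^{+}\circ\cdots\circ[p_{t}]^{-}E_{k}=\sigma_{k-1}(q)E$ by \eqref{equation:T_n_on_E_k}.

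For $U_{p_{i}}$ I would argue similarly: using commutativity of the bracket operators, rewrite $E$ so that $[p_{i}]^{\delta_{i}}$ (with $\delta_{i}=+$ if $i\leq r$, $\delta_{i}=-$ otherwise) is the \emph{innermost} operator, i.e. $E=\bigl([p_{1}]^{\delta_{1}}\circ\cdots\widehat{[p_{i}]^{\delta_{i}}}\cdots\circ[p_{t}]^{\delta_{t}}\bigr)\bigl([p_{i}]^{\delta_{i}}E_{k}\bigr)$, where the hat denotes omission. For every $j\neq i$ the prime $p_{j}$ is coprime to $p_{i}$, so \eqref{align:atkin_prop3} lets $U_{p_{i}}$ pass through $A_{p_{j}}$, and of course $U_{p_{i}}$ commutes with the inclusion $T_{1}$; hence $U_{p_{i}}$ commutes with each $[p_{j}]^{\delta_{j}}$ for $j\neq i$. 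This moves $U_{p_{i}}$ all the way inside, and then the previous lemma applies to $[p_{i}]^{\delta_{i}}E_{k}\in\mathcal{M}_{k}(p_{i})$: it gives $U_{p_{i}}\bigl([p_{i}]^{+}E_{k}\bigr)=[p_{i}]^{+}E_{k}$ for $i\leq r$ and $U_{p_{i}}\bigl([p_{i}]^{-}E_{k}\bigr)=p_{i}^{k-1}[p_{i}]^{-}E_{k}$ for $i>r$. Re-applying the omitted bracket operators to both sides yields $U_{p_{i}}E=E$ respectively $U_{p_{i}}E=p_{i}^{k-1}E$, as claimed; since the $T_{q}$ ($q\nmid N$) and the $U_{p_{i}}$ together generate $\mathbb{T}_{N}$, this shows $E$ is a $\mathbb{T}_{N}$-eigenform.

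The only genuinely delicate point is the bookkeeping needed to justify that one may freely reorder the bracket operators and that $U_{p_{i}}$ really does commute with $[p_{j}]^{\delta_{j}}$ \emph{as maps between the relevant spaces} $\mathcal{M}_{k}(M)\to\mathcal{M}_{k}(Mp_{j})$ — one has to check that the Atkin--Lehner relations \eqref{align:atkin_prop2}, \eqref{align:atkin_prop3}, quoted for a fixed ambient level, are being applied at the correct intermediate levels as the level grows by successive multiplications, and that $U_{p_{i}}$ is the \emph{same} operator (index $p_{i}$, with $p_{i}$ dividing the level) at each stage. This is routine but must be stated carefully; everything else is a direct substitution into the two preceding lemmas.
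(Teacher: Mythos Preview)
Your proposal is correct and follows essentially the same route as the paper's proof: commute $T_{q}$ (resp.\ $U_{p_{i}}$) past each bracket operator using \eqref{align:atkin_prop2} (resp.\ \eqref{align:atkin_prop3}) together with Proposition~\ref{proposition:bracket_operators_commute}, then invoke \eqref{equation:T_n_on_E_k} and the preceding lemma on $[p]^{\pm}E_{k}$. The paper spells out the passage from $T_{\ell}$ to $T_{\ell^{s}}$ via the Hecke recursion and identity \eqref{align:sigma1} where you simply appeal to ``standard recursions'', but the content is the same; your closing remark about tracking the ambient level through the intermediate spaces is a welcome piece of hygiene that the paper leaves implicit.
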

\begin{proof}
Let $\ell$ be a prime number not dividing $N$. Equality (\ref{align:atkin_prop2}) and the definitions of $[p]^{+}$ and $[p]^{-}$ imply that operators $T_{\ell}$ and $[p_{i}]^{\pm}$ commute for any $i$ in the range $\{1,\ldots,t\}$ and for any choice of the sign $\pm$. It follows that
\[T_{\ell}E=[p_{1}]^{+}\circ\ldots\circ [p_{r}]^{+}\circ [p_{r+1}]^{-}\circ\ldots\circ [p_{t}]^{-} (T_{l}E_{k}).\]
Equality (\ref{equation:T_n_on_E_k}) implies that $T_{\ell}E=\sigma_{k-1}(\ell)E$. The operator $T_{\ell^{s}}$ for a fixed $s>1$ equals $P(T_{\ell})$ for a specific choice of $P\in\mathbb{Z}[x]$, so $T_{\ell^{s}}E=P(\sigma_{k-1}(\ell))E$. The polynomial $P$ is determined by the recurrence relation
\[T_{\ell^{s}}=T_{\ell}T_{\ell^{s-1}}-\ell^{s-1}T_{\ell^{s-2}}.\]
If we put $n=\ell^{s-1}$ in equation (\ref{align:sigma1}) the equation $P(\sigma_{k-1}(\ell))=\sigma_{k-1}(\ell^{s})$ follows, so $T_{\ell^{s}}E=\sigma_{k-1}(\ell^{s})E$. For a given $n$ coprime to $N$ the equation $T_{n}E =\sigma_{k-1}(n)E$ follows now from the definition of $T_{n}$ and the fact that $\sigma_{k-1}$ is a multiplicative function.

\medskip\noindent
Let $i$ be a fixed number in the set $\{1,\ldots,r\}$. Equation (\ref{align:atkin_prop3}) implies that $U_{p_{j}}\circ [p_{i}]^{+}=[p_{i}]^{+}\circ U_{p_{j}}$ and $U_{p_{j}}\circ [p_{i}]^{-}=[p_{i}]^{-}\circ U_{p_{j}}$ for any $j\neq i$. Proposition \ref{proposition:bracket_operators_commute} implies that the form $E$ can be written as
\[E=[p_{1}]^{+}\circ\ldots\circ [p_{i-1}]^{+}\circ[p_{i+1}]^{+}\circ\ldots\circ [p_{r}]^{+}\circ [p_{r+1}]^{-}\circ\ldots\circ [p_{t}]^{-}\circ [p_{i}]^{+} E_{k}.\]
and $U_{p_{i}}$ acts on $E$ in the following way
\[U_{p_{i}}E=[p_{1}]^{+}\circ\ldots\circ [p_{i-1}]^{+}\circ[p_{i+1}]^{+}\circ\ldots\circ [p_{r}]^{+}\circ [p_{r+1}]^{-}\circ\ldots\circ [p_{t}]^{-}\circ U_{p_{i}} [p_{i}]^{+} E_{k}.\]
Equation $U_{p_{i}}E=E$ is a direct consequence of (\ref{align:U_p_compose_p_plus}). For $i>r$ we proceed in a similar way to show $U_{p_{i}}E=p_{i}^{k-1}E$. The Hecke algebra $\mathbb{T}_{N}$ is generated by operators $T_{n}$ for $(n,N)=1$ and $U_{p_{i}}$ for $1\leq i\leq t$, so the above argument shows that $E$ is an eigenform with respect to $T_{N}$. 
\end{proof}

We now construct a basis of eigenforms for $k>2$ and $N$ square-free. If $N=N^{-}N^{+}$ is a decomposition into two possibly trivial factors, we define
\begin{equation}\label{equation:definition_E_N_N}
E_{N^{-},N^{+}}^{(k)}=[q_{1}]^{\epsilon_{1}}\circ \ldots [q_{t}]^{\epsilon_{t}}E_{k} 
\end{equation}
where $t$ is the number of prime factors of $N$ and $q_{1},\ldots,q_{t}$ are the prime factors of $N$. For $i$ in $\{1,\ldots,t\}$ we define
\[\epsilon_{i}=\left\{\begin{array}{cc}
  +, & \quad\textrm{if  }\  q_{i}|N^{+},\\
  -, & \quad\textrm{if  }\  q_{i}|N^{-}.
  \end{array}\right.
\]
For $N=1$ we have only one form $E_{1,1}^{(k)}=E_{k}$. We will often drop the upper index in $E_{N^{-},N^{+}}^{(k)}$ and write $E_{N^{-},N^{+}}$ if it is clear from the context that the weight equals $k$.
\begin{theorem}\label{theorem:basis_weight_gt_2}
Let $k>2$ and let $N$ be a square-free integer. The set
\[B:=\{E_{N^{-},N^{+}}^{(k)}:N=N^{-}N^{+}\}\]
forms a $\mathbb{C}$-basis of the vector space $\mathcal{E}_{k}(N)$. Each element of this basis is an eigenform with respect to the Hecke algebra $\mathbb{T}_{N}$. The cardinality of the basis is $2^{t}$ where $t$ is the number of prime factors of $N$.
\end{theorem}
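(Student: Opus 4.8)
The plan is to assemble the statement from three ingredients: the eigenform property, which is already essentially Lemma~\ref{lemma:eigenforms_Eisenstein}; a linear independence argument via distinct systems of $U_{p}$-eigenvalues; and the classical dimension formula for the Eisenstein subspace, which pins the dimension down to exactly $2^{t}$.

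First I would dispose of the eigenform claim and the count. Given a factorization $N=N^{-}N^{+}$ into (automatically coprime) divisors, reorder the prime factors $q_{1},\ldots,q_{t}$ of $N$ so that those dividing $N^{+}$ come first; by Proposition~\ref{proposition:bracket_operators_commute} this reordering does not change the form $E_{N^{-},N^{+}}^{(k)}$ of \eqref{equation:definition_E_N_N}, which is then literally of the shape treated in Lemma~\ref{lemma:eigenforms_Eisenstein}. Hence $E_{N^{-},N^{+}}^{(k)}\in\mathcal{E}_{k}(N)$ is an eigenform for $\mathbb{T}_{N}$, with $T_{n}$ acting by $\sigma_{k-1}(n)$ for $(n,N)=1$, and $U_{q}$ acting by $1$ when $q\mid N^{+}$ and by $q^{k-1}$ when $q\mid N^{-}$. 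Since $N$ is square-free, specifying an ordered factorization $N=N^{-}N^{+}$ amounts to choosing, for each of the $t$ primes dividing $N$, whether it divides $N^{-}$ or $N^{+}$, so the indexing set of $B$ has $2^{t}$ elements.

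Next comes linear independence. For two distinct factorizations $N=N^{-}N^{+}$ and $N=M^{-}M^{+}$ there is a prime $q\mid N$ lying in $N^{+}$ but in $M^{-}$ (or vice versa), and the corresponding $U_{q}$-eigenvalues $1$ and $q^{k-1}$ differ because $q^{k-1}\neq 1$. Thus the $2^{t}$ forms in $B$ are simultaneous eigenvectors of the commuting operators $U_{q_{1}},\ldots,U_{q_{t}}$ with pairwise distinct eigenvalue tuples, and the standard argument (induct on the length of a putative dependence relation, applying one $U_{q}$ at a time to separate the terms) shows they are linearly independent in $\mathcal{E}_{k}(N)$. In particular $\lvert B\rvert=2^{t}$ and $\dim_{\mathbb{C}}\mathcal{E}_{k}(N)\geq 2^{t}$; this already shows the indexing map $(N^{-},N^{+})\mapsto E_{N^{-},N^{+}}^{(k)}$ is injective.

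Finally I would close the gap by a dimension count. For even $k>2$ the Eisenstein subspace $\mathcal{E}_{k}(\Gamma_{0}(N))$ has dimension equal to the number of cusps of $\Gamma_{0}(N)$, and for square-free $N$ this number is $\sum_{d\mid N}\varphi(\gcd(d,N/d))=\sum_{d\mid N}1=2^{t}$, since $\gcd(d,N/d)=1$ for every divisor $d$ of a square-free integer; for odd $k$ there is nothing to prove, as $-I\in\Gamma_{0}(N)$ forces $\mathcal{M}_{k}(\Gamma_{0}(N))=0$. Comparing with the lower bound from the previous step, $B$ is a basis. The only non-formal input is this dimension formula; if one prefers the section to be self-contained, it can be replaced by the classical fact that $\{A_{d}E_{k}:d\mid N\}$ is a basis of $\mathcal{E}_{k}(\Gamma_{0}(N))$ together with the observation that, by Proposition~\ref{proposition:bracket_operators_commute} and the definitions of $[p]^{\pm}$, the change of basis from $\{A_{d}E_{k}\}$ to $B$ has matrix $\bigotimes_{i=1}^{t}\left(\begin{smallmatrix}1 & -q_{i}^{k-1}\\ 1 & -1\end{smallmatrix}\right)$, of determinant $\prod_{i=1}^{t}(q_{i}^{k-1}-1)^{2^{t-1}}\neq 0$. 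I expect the bookkeeping in this transition-matrix variant — matching the $2^{t}$ sign patterns with the $2^{t}$ divisors $d\mid N$ and recognizing the Kronecker product — to be the only fiddly point; the eigenform and independence steps are routine.
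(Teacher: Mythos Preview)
Your proof is correct and follows essentially the same strategy as the paper's: invoke Lemma~\ref{lemma:eigenforms_Eisenstein} for the eigenform property, use distinctness of the eigenvalue systems for linear independence, and match cardinality against $\dim\mathcal{E}_{k}(N)$. The only difference is cosmetic: the paper simply cites \cite[Theorem 3.5.1 and p.~103]{Diamond_Shurman} for $\dim\mathcal{E}_{k}(N)=2^{t}$, whereas you spell out the cusp count and offer a pleasant alternative via the Kronecker-product transition matrix from $\{A_{d}E_{k}\}$ to $B$.
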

\begin{proof}
Forms from the set $B$ are linearly independent because they have different sets of eigenvalues with respect to the Hecke algebra $\mathbb{T}_{N}$, cf. Lemma \ref{lemma:eigenforms_Eisenstein}. Let $d(N)$ denote the number of divisors of $N$. We can choose $N^{-}$ from $d(N)$ possible divisors of $N$, the factor $N^{+}$ is determined by this choice. Hence the cardinality of $B$ equals $d(N)=2^{t}$. But from \cite[Theorem 3.5.1]{Diamond_Shurman} and \cite[p.103]{Diamond_Shurman} we know that the dimension of the space $\mathcal{E}_{k}(N)$ equals $2^{t}$, so $B$ is a basis of this space.
\end{proof}

\begin{corollary}\label{corollary:initial_coefficient_weight_gt_2}
Let $k>2$ and let $N$ be a square-free integer with prime factors $p_{1},\ldots,p_{t}$. Choose a form $E_{N^{-},N^{+}}\in\mathcal{E}_{k}(N)$ which is an eigenform. Let $a_{0}(E_{N^{-},N^{+}})$ denote the initial coefficient of the $q$-expansion of $E$ at infinity. Then
\begin{alignat*}{3}
a_{0}(E_{N^{-},N^{+}}) &=-\frac{B_{k}}{2k}\prod_{i=1}^{t}(1-p_{i}^{k-1}),  &&\quad\textrm{if } N^{-}=1\\
a_{0}(E_{N^{-},N^{+}}) &=0,  &&\quad\textrm{if } N^{-}>1
\end{alignat*}
\end{corollary}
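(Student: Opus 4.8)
The plan is to compute the constant term $a_0$ directly from the definition of the operators $[p]^{+}$ and $[p]^{-}$ and the multiplicativity of taking constant terms under composition. First I would record that for any form $g \in \mathcal{M}_k(M)$ one has $a_0(A_d g) = a_0(g)$, since the substitution $\tau \mapsto d\tau$ does not affect the constant Fourier coefficient. Consequently $a_0([d]^{+}g) = a_0(g) - d^{k-1}a_0(A_d g) = (1 - d^{k-1})a_0(g)$ and $a_0([d]^{-}g) = a_0(g) - a_0(A_d g) = 0$. This already gives the second case instantly: if $N^{-}>1$, then at least one factor $[q_i]^{-}$ appears in the composition \eqref{equation:definition_E_N_N} defining $E_{N^{-},N^{+}}$, and since the bracket operators commute (Proposition \ref{proposition:bracket_operators_commute}) we may push that factor to be applied first (or last), so the constant term is killed: $a_0(E_{N^{-},N^{+}}) = 0$.

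For the first case $N^{-}=1$, every operator in the composition is of the form $[p_i]^{+}$, so applying the identity $a_0([p_i]^{+}g) = (1-p_i^{k-1})a_0(g)$ successively for $i=1,\ldots,t$ yields
\[
a_0(E_{1,N}^{(k)}) = \left(\prod_{i=1}^{t}(1-p_i^{k-1})\right) a_0(E_k).
\]
Finally I would substitute the known constant term $a_0(E_k) = -B_k/(2k)$ from the definition of $E_k$ given in the Notation paragraph, which produces exactly the claimed formula.

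There is essentially no obstacle here: the only thing to be a little careful about is the bookkeeping for the commutation argument in the $N^{-}>1$ case, namely that the presence of a single $[q_i]^{-}$ among operators that all commute with one another suffices to force $a_0 = 0$, regardless of the order in which the $2^t$ possible basis elements are written. This follows formally from Proposition \ref{proposition:bracket_operators_commute} together with the fact that $a_0(A_d \cdot) $ commutes past the other operators in the sense that it only sees constant terms. Everything else is a one-line computation, so the corollary is an immediate consequence of the explicit description of the operators $[p]^{\pm}$.
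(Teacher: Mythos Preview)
Your proposal is correct and follows essentially the same approach as the paper: both compute $a_0([p]^{+}g)=(1-p^{k-1})a_0(g)$ and $a_0([p]^{-}g)=0$, invoke the commutativity of the bracket operators to handle the $N^{-}>1$ case, and then iterate the $[p]^{+}$ identity together with $a_0(E_k)=-B_k/(2k)$ for the $N^{-}=1$ case. Your write-up is slightly more explicit about the intermediate step $a_0(A_d g)=a_0(g)$, but the argument is the same.
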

\begin{proof}
Observe that for any form $f$ and prime $p$ we have $a_{0}([p]^{-}f)=0$. The operators $[\cdot]^{-}$ and $[\cdot]^{+}$ commute, so when $N^{-}>1$ we can write $E_{N^{-},N^{+}}$ as $[p]^{-}f$ where $p$ is prime and $f$ is a form in $\mathcal{E}_{k}(N/p)$, hence $a_{0}(E_{N^{-},N^{+}})=0$. Now for any form $f$ and prime $p$ we obtain
\begin{equation}\label{equation:a_0_for_p_plus}
a_{0}([p]^{+}f)=a_{0}(f)(1-p^{k-1}). 
\end{equation}
So if $N^{-}=1$ we obtain 
\[a_{0}(E_{N^{-},N^{+}}) =-\frac{B_{k}}{2k}\prod_{i=1}^{t}(1-p_{i}^{k-1})\]
if we apply successively equation (\ref{equation:a_0_for_p_plus}) to each factor of $N$. Finally we recall that $a_{0}(E_{k})=-\frac{B_{k}}{2k}$.
\end{proof}
In weight $k=2$ the series $E_{2}$ does not define a modular form in $\mathcal{M}_{2}(1)$, so in order to find the basis of eigenforms in $\mathcal{E}_{2}(N)$ we need to modify the argument above. It is well-known that for a prime $p$ the form $[p]^{+}E_{2}$ is a modular form in $\mathcal{E}_{2}(p)$.
\begin{lemma}
Let $p$ be a prime number. The form $[p]^{+}E_{2}\in\mathcal{E}_{2}(p)$ is an eigenform with respect to the Hecke algebra $\mathbb{T}_{p}$. The Fourier coefficient $a_{1}([p]^{+}E_{2})$ is 1 and for a prime $q\neq p$ the $q$-th Fourier coefficient of $[p]^{+}E_{2}$ is $q+1$. The following identities hold
\begin{align*}
U_{p}([p]^{+}E_{2})&=[p]^{+}E_{2},\\
T_{n}([p]^{+}E_{2})&=a_{n}(E_{2})[p]^{+}E_{2},\quad\textrm{for }(n,Np)=1.
\end{align*}
\end{lemma}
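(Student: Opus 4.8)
The plan is to exploit the fact that, for $p$ prime, the Eisenstein subspace $\mathcal{E}_{2}(p)$ is one-dimensional. By the same dimension formulas already invoked for $k>2$ (see \cite[Theorem 3.5.1]{Diamond_Shurman} and \cite[p.~103]{Diamond_Shurman}; in weight $2$ the Eisenstein dimension is $2^{t}-1$ rather than $2^{t}$, since $E_{2}$ itself is not modular) one has $\dim_{\mathbb{C}}\mathcal{E}_{2}(p)=2^{1}-1=1$. Every Hecke operator in $\mathbb{T}_{p}$, including $U_{p}$, preserves $\mathcal{E}_{2}(p)$, and $\mathcal{E}_{2}(p)$ is spanned by the single nonzero form $[p]^{+}E_{2}$ (it is nonzero because $a_{1}([p]^{+}E_{2})=1$, computed below). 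Hence $[p]^{+}E_{2}$ is automatically a simultaneous eigenform for $\mathbb{T}_{p}$. This is the cleanest route, and it is worth stressing that it sidesteps the only genuine subtlety here: in contrast to Lemma~\ref{lemma:eigenforms_Eisenstein}, one cannot deduce the eigenform property of $[p]^{+}E_{2}$ from that of $E_{2}$, precisely because $E_{2}\notin\mathcal{M}_{2}(1)$.

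It then remains to pin down the Fourier coefficients and the eigenvalues. Since $[p]^{+}E_{2}=E_{2}-pA_{p}E_{2}$, its $q$-expansion is $-\frac{1-p}{24}+\sum_{n\geq 1}\bigl(\sigma_{1}(n)-p\,\sigma_{1}(n/p)\bigr)q^{n}$ with the convention $\sigma_{1}(n/p)=0$ for $p\nmid n$. Therefore $a_{1}([p]^{+}E_{2})=\sigma_{1}(1)=1$, and for a prime $q\neq p$ one gets $a_{q}([p]^{+}E_{2})=\sigma_{1}(q)=q+1$. Because $[p]^{+}E_{2}$ is thus normalized, the eigenvalue of $T_{n}$ on it (for $n$ coprime to $p$) equals $a_{n}([p]^{+}E_{2})$, and since $p\nmid n$ this coefficient is just $\sigma_{1}(n)-p\,\sigma_{1}(n/p)=\sigma_{1}(n)=a_{n}(E_{2})$; hence $T_{n}([p]^{+}E_{2})=a_{n}(E_{2})[p]^{+}E_{2}$. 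For $U_{p}$ I would use $a_{1}(U_{p}F)=a_{p}(F)$ together with normalization: writing $U_{p}([p]^{+}E_{2})=\mu\,[p]^{+}E_{2}$ and comparing first coefficients gives $\mu=a_{p}([p]^{+}E_{2})=\sigma_{1}(p)-p\,\sigma_{1}(1)=(p+1)-p=1$, so $U_{p}([p]^{+}E_{2})=[p]^{+}E_{2}$.

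As for obstacles, there is none of substance; the only point requiring care is the one flagged above, namely that the step from $E_{2}$ to $[p]^{+}E_{2}$ cannot imitate the $k>2$ argument verbatim. If one prefers a computation self-contained from the dimension count, the identities $U_{p}([p]^{+}E_{2})=[p]^{+}E_{2}$ and $T_{\ell}([p]^{+}E_{2})=\sigma_{1}(\ell)[p]^{+}E_{2}$ for primes $\ell\neq p$ can be verified directly on $q$-expansions, exactly as in the earlier computation of $U_{p}([p]^{+}E_{k})$ for $k>2$, by invoking \eqref{align:sigma1} and \eqref{align:sigma2} with $k=2$; one then extends to all $T_{n}$ with $(n,p)=1$ via the recurrence $T_{\ell^{s}}=T_{\ell}T_{\ell^{s-1}}-\ell\,T_{\ell^{s-2}}$ and multiplicativity of $\sigma_{1}$, and since $\mathbb{T}_{p}$ is generated by $\{T_{n}:(n,p)=1\}\cup\{U_{p}\}$ this again gives the eigenform property.
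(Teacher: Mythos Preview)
Your proposal is correct, and it takes a genuinely different route from the paper. The paper proceeds by direct computation on $q$-expansions: it verifies $T_{\ell}([p]^{+}E_{2})=(1+\ell)[p]^{+}E_{2}$ for each prime $\ell\neq p$ and $U_{p}([p]^{+}E_{2})=[p]^{+}E_{2}$ by comparing the $n$-th Fourier coefficients on both sides and invoking the identities \eqref{align:sigma1} and \eqref{align:sigma2} with $k=2$, exactly the alternative you sketch at the end. You instead observe that $\dim_{\mathbb{C}}\mathcal{E}_{2}(p)=1$, so any nonzero element is automatically a simultaneous eigenvector for the Hecke-stable subspace $\mathcal{E}_{2}(p)$; the eigenvalues are then read off from $a_{1}(T\cdot f)$ using normalization.

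Your argument is shorter and more conceptual for this particular lemma, and it isolates cleanly why the $k=2$ case needs separate treatment. What the paper's computational approach buys is uniformity with the surrounding arguments: the same coefficient-by-coefficient verification is reused in the next lemma (Lemma~\ref{lemma:basis_for_weight_2}), where the Eisenstein space is no longer one-dimensional and your dimension trick is unavailable. So the paper's method, while heavier here, sets up the template that carries the general square-free case, whereas your shortcut is specific to prime level.
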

\begin{proof}
Let $\ell\neq p$ be a prime number. For a fixed integer $n$ we obtain
\[a_{n}(T_{\ell}([p]^{+}E_{2}))=\sigma_{1}(n\ell)-p\sigma_{1}(n\ell/p)+\ell\sigma_{1}(n/\ell)-\ell p\sigma_{1}(n/(\ell p)).\]
On the other hand we know that
\[(1+\ell)a_{n}([p^{+}]E_{2})=(1+\ell)(\sigma_{1}(n)-p\sigma_{1}(n/p)).\]
We now apply equation (\ref{align:sigma1}) to obtain
\[a_{n}(T_{\ell}([p]^{+}E_{2}))=(1+\ell)a_{n}([p^{+}]E_{2}).\]
It is easy to see that $a_{n}(U_{p}[p]^{+}E_{2})=a_{np}([p]^{+}E_{2})$ and
\[a_{np}([p]^{+}E_{2})=\sigma_{1}(np)-p\sigma_{1}(n)=\sigma_{1}(n)-p\sigma_{1}(n/p)=a_{n}([p]^{+}E_{2}).\]
The third equation is a consequence of (\ref{align:sigma2}). Hence, the form $[p]^{+}E_{2}$ is an eigenform with respect to $U_{p}$ and any $T_{\ell}$ for $\ell\neq p$, so it is an eigenform with respect to $\mathbb{T}_{p}$. 
The second equation from the statement of the lemma follows from the definition of $T_{n}$ and from the multiplicativity of $\sigma_{1}$. 
From the definition we also obtain that $a_{1}([p]^{+}E_{2})=a_{1}(E_{2})=\sigma_{1}(1)=1$ and also $a_{q}([p]^{+}E_{2})=a_{q}(E_{2})=\sigma_{1}(q)=1+q$ for any prime $q\neq p$.
\end{proof}

\begin{lemma}\label{lemma:basis_for_weight_2}
Let $N>1$ be a square-free integer. Suppose $f\in\mathcal{E}_{2}(N)$ is an eigenform with respect to $\mathbb{T}_{N}$ such that $a_{1}(f)=1$ and $a_{q}(f)=1+q$ for any prime $q\nmid N$. For a fixed prime $p\nmid N$ the forms $[p]^{+}f$ and $[p]^{-}f\in\mathcal{M}_{2}(Np)$ are eigenforms with respect to $\mathbb{T}_{Np}$. The following identities hold
\begin{align*}
U_{p}([p]^{+}f)&=[p]^{+}f,\\
U_{p}([p]^{-}f)&=p[p]^{-}f,\\
T_{n}([p]^{+}f)&=a_{n}(f)[p]^{+}f,\quad\textrm{for }(n,Np)=1,\\
T_{n}([p]^{-}f)&=a_{n}(f)[p]^{-}f,\quad\textrm{for }(n,Np)=1.
\end{align*}
Moreover $a_{1}([p]^{\pm}f)=1$ and $a_{q}([p]^{\pm}f)=1+q$ for any prime $q\nmid Np$.
\end{lemma}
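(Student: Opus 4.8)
The plan is to repeat the argument of the previous lemma and of Lemma~\ref{lemma:eigenforms_Eisenstein}, now for an arbitrary normalized $\mathbb{T}_N$-eigenform $f\in\mathcal{E}_2(N)$. Since $\mathbb{T}_{Np}$ is generated by the $T_n$ with $(n,Np)=1$, the $U_q$ with $q\mid N$, and $U_p$, it suffices to show that each of these scales both $[p]^{+}f=f-p\,A_pf$ and $[p]^{-}f=f-A_pf$; note first that these indeed lie in $\mathcal{M}_2(Np)$, because $f\in\mathcal{M}_2(N)\subset\mathcal{M}_2(Np)$ and $A_p$ maps $\mathcal{M}_2(N)$ into $\mathcal{M}_2(Np)$.

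The $T_n$'s and the $U_q$'s cost nothing here, since $f$ is genuinely modular and the Atkin--Lehner commutation relations apply directly (unlike for $E_2$). For $(n,Np)=1$, relation (\ref{align:atkin_prop2}) --- extended from primes to all such $n$ via the polynomial identities $T_{\ell^s}=P(T_\ell)$, exactly as in Lemma~\ref{lemma:eigenforms_Eisenstein} --- shows that $T_n$ commutes with $A_p$, hence with $[p]^{\pm}$; since $a_1(f)=1$ we have $T_nf=a_n(f)f$, so $T_n([p]^{\pm}f)=a_n(f)[p]^{\pm}f$. For a prime $q\mid N$ (in particular $q\neq p$), relation (\ref{align:atkin_prop3}) gives $U_q\circ A_p=A_p\circ U_q$, hence $U_q([p]^{\pm}f)=[p]^{\pm}(U_qf)=a_q(f)[p]^{\pm}f$.

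The one step with real content is the action of $U_p$, which I would deduce from $U_pA_pf=f$, from $T_pf=U_pf+p\,A_pf$ (an identity of $q$-expansions, hence of forms in $\mathcal{M}_2(Np)$), and from $T_pf=(1+p)f$ (valid since $p\nmid N$ and $a_p(f)=1+p$). Together these give $U_pf=(1+p)f-p\,A_pf$, and hence
\begin{gather*}
U_p([p]^{+}f)=U_pf-p\,f=f-p\,A_pf=[p]^{+}f,\\
U_p([p]^{-}f)=U_pf-f=p\,(f-A_pf)=p\,[p]^{-}f.
\end{gather*}
Equivalently, staying with $q$-expansions as in the previous lemma, both identities reduce to the single relation $a_{np}(f)+p\,a_{n/p}(f)=(1+p)a_n(f)$ for all $n\geq 1$, which is the Hecke recursion at the good prime $p$: it is trivial for $p\nmid n$ and, for $p\mid n$, reduces by multiplicativity of the eigenform coefficients to the case $n=p^s$, that is, to identity (\ref{align:sigma1}) with $n$ replaced by $p^s$ and $k=2$ (using $a_{p^s}(f)=\sigma_1(p^s)$). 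Since every generator of $\mathbb{T}_{Np}$ now acts as a scalar, $[p]^{\pm}f$ is a $\mathbb{T}_{Np}$-eigenform with the asserted eigenvalues.

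For the normalization I would simply note that $1/p\notin\mathbb{Z}$ and $q/p\notin\mathbb{Z}$ for a prime $q\neq p$, so the $A_p$-term does not affect the first or the $q$-th Fourier coefficient; hence $a_1([p]^{\pm}f)=a_1(f)=1$ and $a_q([p]^{\pm}f)=a_q(f)=1+q$ for every prime $q\nmid Np$. I expect no genuine obstacle --- the entire argument runs parallel to the already-treated case $f=[p]^{+}E_2$ --- the only thing demanding care being the bookkeeping with the convention that $\sigma_1$, equivalently $a_{n/p}$, vanishes on non-integers.
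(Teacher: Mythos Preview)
Your proof is correct and follows essentially the same strategy as the paper: verify that each generator of $\mathbb{T}_{Np}$ acts as a scalar on $[p]^{\pm}f$, using the Atkin--Lehner commutation relations for $T_n$ and $U_q$ ($q\mid N$), and a direct computation for $U_p$. Your operator-level derivation of the $U_p$ action from $T_pf=U_pf+p\,A_pf$ and $U_pA_pf=f$ is a clean alternative to the paper's coefficient-by-coefficient verification (which you also give), and you make explicit the $U_q$ step for $q\mid N$ that the paper leaves implicit.
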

\begin{proof}
Let $\ell$ be a prime not dividing $Np$. Formula (\ref{align:atkin_prop2}) implies that $$(T_{\ell}\circ[p]^{+})f=([p]^{+}\circ T_{\ell})f.$$ The form $f$ is normalized so $T_{\ell}f=a_{\ell}(f)f$ and it follows that $$T_{\ell}([p]^{+}f)=a_{\ell}(f)[p]^{+}f.$$ In a similar way we show that $T_{\ell}([p]^{-}f)=a_{\ell}(f)[p]^{-}f$. From the multiplicativity of $\sigma_{1}$, definition of $E_{2}$ and of $T_{n}$ for $(n,Np)=1$ we obtain the third and fourth equation from the statement of this lemma.

\medskip\noindent
The equality $U_{p}([p]^{-}f)=p\cdot [p]^{-}f$ is equivalent to 
\begin{equation}\label{equation:equal_coeff_U_p}
a_{np}(f)-a_{n}(f)=p(a_{n}(f)-a_{n/p}(f)).
\end{equation}
If $f$ is a normalized eigenform, then $a_{np}(f)=a_{n}(f)a_{p}(f)$ for $p\nmid n$. Since $p\nmid N$ we obtain $a_{p}(f)=1+p$ and equation (\ref{equation:equal_coeff_U_p}) holds. In the case $n=n'p^{\alpha}$ for $\alpha>0$ equation (\ref{equation:equal_coeff_U_p}) is equivalent to
\[a_{p^{\alpha+1}}(f)=(p+1)a_{p^{\alpha}}(f)-pa_{p^{\alpha-1}}(f).\]
This clearly holds because $f$ is an eigenform for $\mathbb{T}_{N}$ and we therefore have the recurrence relation $T_{p^{\alpha+1}}=T_{p}T_{p^{\alpha}}-pT_{p^{\alpha-1}}$ and $a_{p}(f)=1+p$. It is possible to show that $U_{p}([p]^{+}f)=[p]^{+}f$ in a similar fashion. Finally, the equalities $a_{1}([p]^{\pm}f)=1$ and $a_{q}([p]^{\pm}f)=1+q$ follow from the assumptions made on $f$ and from definitions of $[p]^{\pm}$.
\end{proof}
For $k=2$ we can adopt the notation $E_{N^{-},N^{+}}^{(k)}$ from (\ref{equation:definition_E_N_N}) with one small exception: we require that $N^{+}>1$.
\begin{theorem}\label{theorem:basis_for_weight_2}
Let $N>1$ be a square-free integer. The set
\[B:=\{E_{N^{-},N^{+}}^{(2)}:N=N^{-}N^{+}, N^{+}>1\}\]
forms a $\mathbb{C}$-basis of the vector space $\mathcal{E}_{2}(N)$. Each element of this basis is an eigenform with respect to the Hecke algebra $\mathbb{T}_{N}$. The cardinality of the basis is $2^{t}-1$ where $t$ is the number of prime factors of $N$.
\end{theorem}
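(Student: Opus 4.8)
The plan is to mimic the proof of Theorem~\ref{theorem:basis_weight_gt_2}, adjusting for the fact that $E_{2}$ is not a modular form of level $1$. I would organize the argument into four parts: (i) count $|B|$; (ii) show that each $E_{N^{-},N^{+}}^{(2)}$ is a well-defined $\mathbb{T}_{N}$-eigenform lying in $\mathcal{E}_{2}(N)$; (iii) prove linear independence; and (iv) conclude by comparing with $\dim_{\mathbb{C}}\mathcal{E}_{2}(N)$. For (i): a decomposition $N=N^{-}N^{+}$ is the same as a choice of divisor $N^{-}$ of the square-free number $N$, of which there are $2^{t}$; the constraint $N^{+}>1$ removes exactly $N^{-}=N$, leaving $2^{t}-1$ admissible pairs, which --- once (iii) guarantees that distinct pairs give distinct forms --- yields $|B|=2^{t}-1$.

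For (ii) I would construct $E_{N^{-},N^{+}}^{(2)}$ by iterating Lemma~\ref{lemma:basis_for_weight_2}. The one place where weight $2$ genuinely differs from the case $k>2$ is that a $[\,\cdot\,]^{-}$ operator cannot be applied directly to $E_{2}$; I would circumvent this by using $N^{+}>1$ to order the prime factors $q_{1},\dots,q_{t}$ of $N$ so that $q_{1}\mid N^{+}$. The lemma immediately preceding Lemma~\ref{lemma:basis_for_weight_2} then puts $[q_{1}]^{+}E_{2}$ in $\mathcal{E}_{2}(q_{1})$ as a $\mathbb{T}_{q_{1}}$-eigenform with $a_{1}=1$ and $a_{q}=1+q$ for every prime $q\neq q_{1}$. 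Applying Lemma~\ref{lemma:basis_for_weight_2} successively with $q_{2},\dots,q_{t}$, each step carries a normalized eigenform of this shape at level $q_{1}\cdots q_{j-1}$ to one at level $q_{1}\cdots q_{j}$, so after $t$ steps we land on a $\mathbb{T}_{N}$-eigenform; by Proposition~\ref{proposition:bracket_operators_commute} the result is independent of the order chosen for $q_{2},\dots,q_{t}$ and hence equals $E_{N^{-},N^{+}}^{(2)}$. The only input beyond the lemmas of this section --- and the step I expect to be the main (if mild) obstacle to a fully self-contained write-up --- is that the iteration remains inside the Eisenstein subspace rather than merely inside $\mathcal{M}_{2}$: one either invokes that the operators $[\,\cdot\,]^{\pm}$, being built from $A_{d}$, carry Eisenstein series to Eisenstein series, or observes at the end that the eigenvalue system $a_{q}=1+q$ is incompatible with the Ramanujan bound and so cannot occur in $\mathcal{S}_{2}(N)$, forcing membership in $\mathcal{E}_{2}(N)$.

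For (iii) I would read off the Hecke eigenvalues of $E_{N^{-},N^{+}}^{(2)}$: pulling the $i$-th bracket operator to the outside via Proposition~\ref{proposition:bracket_operators_commute} and invoking Lemma~\ref{lemma:basis_for_weight_2}, the $U_{q_{i}}$-eigenvalue is $1$ when $q_{i}\mid N^{+}$ and $q_{i}$ when $q_{i}\mid N^{-}$, while $T_{q}$ acts by $1+q$ for every prime $q\nmid N$. Since $1\neq q_{i}$, this eigenvalue system determines the partition of the prime factors of $N$ into those dividing $N^{+}$ and those dividing $N^{-}$, so distinct decompositions give distinct systems; eigenforms of the commutative algebra $\mathbb{T}_{N}$ with distinct eigenvalue systems are linearly independent. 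Thus $B$ is a linearly independent subset of $\mathcal{E}_{2}(N)$ of cardinality $2^{t}-1$, and since $\dim_{\mathbb{C}}\mathcal{E}_{2}(N)=2^{t}-1$ for $N>1$ square-free with $t$ prime factors --- the $2^{t}$ cusps of $\Gamma_{0}(N)$ less the single linear relation among the associated Eisenstein series in weight $2$, cf.~\cite[Theorem~3.5.1, p.~103]{Diamond_Shurman} --- the set $B$ must be a basis, completing part (iv).
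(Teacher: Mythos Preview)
Your proposal is correct and follows essentially the same approach as the paper's own proof, which simply says to repeat the argument of Theorem~\ref{theorem:basis_weight_gt_2} with Lemma~\ref{lemma:basis_for_weight_2} in place of Lemma~\ref{lemma:eigenforms_Eisenstein} and then compare with the known dimension $2^{t}-1$ from \cite[Theorem~3.5.1]{Diamond_Shurman}. You have merely unpacked the induction (base case $[q_{1}]^{+}E_{2}$, inductive step via Lemma~\ref{lemma:basis_for_weight_2}) and flagged the minor point about landing in $\mathcal{E}_{2}(N)$ rather than just $\mathcal{M}_{2}(N)$, which the paper leaves implicit.
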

\begin{proof}
We virtually repeat the proof of Theorem \ref{theorem:basis_weight_gt_2} replacing Lemma \ref{lemma:eigenforms_Eisenstein} with Lemma \ref{lemma:basis_for_weight_2}. The set $B$ has one less element in this case and we compare it with \cite[Theorem 3.5.1]{Diamond_Shurman} to prove that $B$ is a basis of the $\mathcal{E}_{2}(N)$. 
\end{proof}
\begin{remark}
Theorem \ref{theorem:basis_for_weight_2} is proved in \cite[\S 2]{Yoo_multiplicity} in another way and the proof requires additional tools which are not necessary in our proof.
\end{remark}

\begin{corollary}
Let $N$ be a square-free integer with prime factors $p_{1},\ldots,p_{t}$. Choose a form $E=E_{N^{-},N^{+}}\in\mathcal{E}_{2}(N)$ which is an eigenform. Then
\begin{alignat*}{3}
a_{0}(E_{N^{-},N^{+}}) &=-\frac{B_{2}}{4}\prod_{i=1}^{t}(1-p_{i}),  &&\quad\textrm{if } N^{-}=1\\
a_{0}(E_{N^{-},N^{+}}) &=0,  &&\quad\textrm{if } N^{-}>1
\end{alignat*}
\end{corollary}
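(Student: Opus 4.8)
The plan is to mimic the proof of Corollary~\ref{corollary:initial_coefficient_weight_gt_2}, the only extra care being that $E_{2}$ is not itself a modular form of level $1$, so the inductive computation of the constant term must be started one step later, from the genuine modular form $[p]^{+}E_{2}\in\mathcal{E}_{2}(p)$ rather than from $E_{2}$ itself.

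First I would treat the case $N^{-}>1$. By Proposition~\ref{proposition:bracket_operators_commute} the bracket operators commute, so we may write $E_{N^{-},N^{+}}=[p]^{-}g$ where $p$ is a prime dividing $N^{-}$ and $g=E_{N^{-}/p,N^{+}}$; in the weight $2$ convention $N^{+}>1$, hence $g$ is still a genuine modular form. Since $A_{p}$ sends $\sum_{n}a_{n}q^{n}$ to $\sum_{n}a_{n}q^{pn}$, it preserves the constant term, and therefore $a_{0}([p]^{-}g)=a_{0}(g)-a_{0}(A_{p}g)=0$. This gives the second formula.

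For $N^{-}=1$ we have $N=N^{+}>1$, so $E=[p_{1}]^{+}\circ\cdots\circ[p_{t}]^{+}E_{2}$ with $t\geq 1$. The innermost application is computed directly from the formal expansion $E_{2}=-\frac{B_{2}}{4}+\sum_{n\geq 1}\sigma_{1}(n)q^{n}$: since $[p_{t}]^{+}=T_{1}-p_{t}A_{p_{t}}$ in weight $2$ and $[p_{t}]^{+}E_{2}$ is a genuine element of $\mathcal{E}_{2}(p_{t})$ whose $q$-expansion is obtained by applying this operator formally to that of $E_{2}$, its constant term is $-\frac{B_{2}}{4}-p_{t}\bigl(-\frac{B_{2}}{4}\bigr)=-\frac{B_{2}}{4}(1-p_{t})$. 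The remaining operators $[p_{t-1}]^{+},\ldots,[p_{1}]^{+}$ are then applied to honest modular forms, and at each step the identity $a_{0}([p]^{+}f)=a_{0}(f)(1-p)$ — the $k=2$ instance of \eqref{equation:a_0_for_p_plus}, proved by the same one-line computation $a_{0}(f)-p\,a_{0}(A_{p}f)=a_{0}(f)(1-p)$ — multiplies the constant term by $(1-p_{i})$. Iterating yields $a_{0}(E_{N^{-},N^{+}})=-\frac{B_{2}}{4}\prod_{i=1}^{t}(1-p_{i})$.

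The only point requiring attention — hardly an obstacle — is this bookkeeping forced by $E_{2}\notin\mathcal{M}_{2}(1)$: one must note that the formal constant-term manipulations are legitimate precisely because $[p]^{+}E_{2}$, and every form obtained from it by further bracket operators, is a genuine modular form whose $q$-expansion agrees with the formal one. Apart from that the argument is a verbatim repetition of Corollary~\ref{corollary:initial_coefficient_weight_gt_2}.
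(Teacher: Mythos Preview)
Your proof is correct and follows essentially the same approach as the paper's: for $N^{-}>1$ write the form as $[p]^{-}h$ to kill the constant term, and for $N^{-}=1$ iterate the identity $a_{0}([p]^{+}h)=a_{0}(h)(1-p)$. Your version is in fact slightly more careful than the paper's terse argument, since you make explicit that the first application of $[p_{t}]^{+}$ to the non-modular $E_{2}$ must be handled at the level of formal $q$-expansions before the remaining steps take place among honest modular forms.
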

\begin{proof}
If $N^{-}>1$, then $E_{N^{-},N^{+}}$ is of the form $[p]^{-}h$ for some $h\in\mathcal{E}_{2}(N/p)$ and a prime $p\mid N$, hence $a_{0}(E_{N^{-},N^{+}})=0$. For the case $N^{-}=1$ we simply use that $a_{0}([p]^{+}h)=a_{0}(h)(1-p)$.
\end{proof}

\section{Upper bound of congruences}\label{sec:Upper_bound_of_congruences}
In this section we discuss a general upper bound for the exponent of congruences between cuspidal eigenforms and eigenforms in the Eisenstein subspace for square-free levels $N$ and even weights $k\geq 2$. The theorems proved here generalize the results obtained previously in \cite{Naskrecki_Springer}.

\begin{lemma}[{\cite[Theorem 3]{Atkin_Lehner}}]\label{lemma:Atkin_Lehner_special_coeff}
Let $N$ be a square-free integer and let $k\geq 2$ be an even integer. If $f\in\mathcal{S}_{k}(N)^{\textrm{new}}$ is a newform, then for any $p\mid N$ we have
\[a_{p}(f)=-\lambda_{p}p^{k/2-1},\]
where $\lambda_{p}\in\{\pm 1\}$.
\end{lemma}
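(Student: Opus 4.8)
The statement to prove is the Atkin–Lehner fact that for a newform $f\in\mathcal{S}_{k}(N)^{\mathrm{new}}$ with $N$ square-free and $k\geq 2$ even, the $U_p$-eigenvalue $a_p(f)$ for $p\mid N$ equals $-\lambda_p p^{k/2-1}$ with $\lambda_p\in\{\pm 1\}$. Since this is quoted as a known theorem from \cite{Atkin_Lehner}, the proof amounts to recalling the Atkin–Lehner machinery and showing why the square-free hypothesis forces the eigenvalue of the Atkin–Lehner involution $W_p$ to determine $a_p(f)$ so rigidly.

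First I would recall that for $p\mid N$ square-free, the local component of $N$ at $p$ is exactly $p$, so the Atkin–Lehner operator $W_p$ (normalized slash by the matrix $\left(\begin{smallmatrix} p\alpha & \beta \\ N\gamma & p\delta \end{smallmatrix}\right)$ with determinant $p$) is an involution on $\mathcal{M}_k(N)$ that preserves $\mathcal{S}_k(N)^{\mathrm{new}}$ and commutes with the Hecke operators $T_\ell$ for $\ell\nmid N$. By multiplicity one, the newform $f$ is therefore a $W_p$-eigenvector: $W_p f = \epsilon_p f$ with $\epsilon_p\in\{\pm 1\}$ (the eigenvalue is $\pm 1$ precisely because $W_p^2=\mathrm{id}$ in the square-free case — this is where the hypothesis on $N$ enters). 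Next I would invoke the key local relation of Atkin–Lehner: when $p\| N$, one has $U_p f = -p^{k/2-1}\,\epsilon_p\, f$, i.e. the $U_p$-eigenvalue and the $W_p$-eigenvalue are linked by $a_p(f) = -\epsilon_p p^{k/2-1}$. Setting $\lambda_p:=\epsilon_p\in\{\pm 1\}$ gives exactly the claimed formula.

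The one genuine computation I would present (rather than cite) is the relation $U_p W_p = -p^{k/2-1}$ on the $f$-isotypic line, or equivalently $a_p(f)^2 = p^{k-2}$ together with the sign. The cleanest route is to compare Fourier expansions: write $W_p$ as (a scalar times) the full Atkin–Lehner matrix at $p$, use that $f$ is $p$-new so that $a_1(W_p f)$ and $a_p(W_p f)$ can be computed from the action of the lower-triangular and anti-diagonal pieces, and match with $W_p f = \epsilon_p f$. An alternative, perhaps the one I would actually write, is to pass through the associated automorphic representation: $f$ being $p$-new with $p\|N$ means the local representation $\pi_p$ is the Steinberg representation (or an unramified twist of it by a quadratic character), for which the $U_p$-eigenvalue is $\pm p^{(k-2)/2}$ — but since the excerpt works purely classically, I would keep the argument at the level of $q$-expansions and Atkin–Lehner's Theorem 3 directly.

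The main obstacle is not conceptual but one of bookkeeping: pinning down the exact normalization of $W_p$ and the power of $p$ that appears, since different sources normalize the slash operator by $p^{1-k}$, $p^{k/2}$, or $p^{(k-2)/2}$ in different places (the paper itself uses the $d^{1-k}$ normalization for $A_d$). With the normalization chosen so that $W_p$ is an honest involution fixing the space $\mathcal{S}_k(N)$, the exponent $k/2-1$ in the statement is forced, and the sign ambiguity is absorbed into $\lambda_p$. Since the result is merely being imported as a tool for Section \ref{sec:Upper_bound_of_congruences}, I would keep the proof to a short paragraph citing \cite[Theorem 3]{Atkin_Lehner} for the identity $U_p W_p = -p^{k/2-1}\,\mathrm{id}$ on newforms, deriving $a_p(f)=-\lambda_p p^{k/2-1}$ in one line, and not reproving the Atkin–Lehner relations from scratch.
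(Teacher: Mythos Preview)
The paper does not prove this lemma at all: it is stated as a direct citation of \cite[Theorem 3]{Atkin_Lehner} and is used as a black box in Section~\ref{sec:Upper_bound_of_congruences}. Your proposal correctly identifies this and, in fact, goes further than the paper by sketching the underlying Atkin--Lehner argument (the involution $W_p$, multiplicity one, and the relation $U_p = -p^{k/2-1}W_p$ on the newform line when $p\Vert N$); this sketch is accurate and is essentially how the original reference establishes the result, so there is nothing to correct.
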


Let $K$ be a number field and $\mathcal{O}_{K}$ its ring of integers. For an element $\alpha\in\mathcal{O}_{K}$ and a maximal ideal $\lambda\subset\mathcal{O}_{K}$ let $\mathrm{ord}_{\lambda}(\alpha)$ denote the integer that satisfies the condition
\[n\leq \mathrm{ord}_{\lambda}(\alpha)\quad\Longleftrightarrow\quad\lambda^{n}\mid \alpha\mathcal{O}_{K}.\]
We can naturally extend $\mathrm{ord}_{\lambda}$ to a function on $K^{\times}$. For a prime $\ell\in\mathbb{Z}$ let $v_{\ell}$ denote the standard $\ell$-adic valuation on $\mathbb{Q}^{\times}$. For any $a\in\mathbb{Q}^{\times}$ we have $\mathrm{ord}_{\lambda}(a)=\mathrm{ord}_{\lambda}(\ell)v_{\ell}(a)$ where $\ell$ is the field characteristic of $\mathcal{O}_{K}/\lambda$.

\medskip\noindent
Let $K_{f}$ denote the field of coefficients of the newform $f\in\mathcal{S}_{k}(N)^{\mathrm{new}}$ and by $\mathcal{O}_{f}$ its ring of integers.

\medskip\noindent
Let $f,g\in\mathcal{M}_{k}(N)$ be two eigenforms and $K$ be a field that contains the composite of $K_{f}$ and $K_{g}$. We say that $f$ and $g$ are congruent modulo a power $\lambda^r$ of a maximal ideal $\lambda\in\mathcal{O}_{K}$ if and only if
\[\modulo{a_{n}(f)}{a_{n}(E)}{\lambda^{r}}\]
for all $n\geq 0$, where $\{a_{n}(f)\}$ and $\{a_{n}(g)\}$ are Fourier coefficient of the $q$-expansion at infinity of $f$ and $g$, respectively. 

\begin{theorem}\label{corollary:bound_for_a_0_nonzero}
Let $p_{1},\ldots,p_{t}$ be different prime factors of $N$ and let $k\geq 2$. Suppose that $f\in\mathcal{S}_{k}(N)^{\mathrm{new}}$ is a newform which is congruent to the Eisenstein eigenform $E=[p_{1}]^{+}\circ\ldots\circ[p_{t}]^{+}E_{k}\in\mathcal{E}_{k}(N)$ modulo a power $r>0$ of a maximal ideal $\lambda\subset\mathcal{O}_{f}$. If $\ell$ is the residual characteristic of $\lambda$, we obtain the bound
\[r\leq ord_{\lambda}(\ell)\cdot v_{\ell}\left(-\frac{B_{k}}{2k} \prod_{i=1}^{t}(1-p_{i})\right). \]
\end{theorem}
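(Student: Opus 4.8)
The plan is to compare the congruence relation term-by-term at $n=1$, at primes $p_i \mid N$, and at $n=0$, and to extract from these a single integrality constraint that forces the stated bound. First I would normalize: since $f$ is a newform we have $a_1(f)=1$, and since $E=[p_1]^{+}\circ\cdots\circ[p_t]^{+}E_k$ we have $a_1(E)=1$ as well (each $[p]^{+}$ preserves the leading coefficient, by the computations in Section~\ref{sec:Standard_basis}), so the congruence at $n=1$ is automatic and the eigenform normalizations are compatible. Next, for each prime $p_i \mid N$, Lemma~\ref{lemma:Atkin_Lehner_special_coeff} gives $a_{p_i}(f)=-\lambda_{p_i}p_i^{k/2-1}$ with $\lambda_{p_i}\in\{\pm 1\}$, while Lemma~\ref{lemma:eigenforms_Eisenstein} (and its weight-$2$ analogue, Lemma~\ref{lemma:basis_for_weight_2}) gives $a_{p_i}(E)=1$, since $U_{p_i}E=E$. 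Therefore the congruence $\modulo{a_{p_i}(f)}{a_{p_i}(E)}{\lambda^{r}}$ reads
\[
\modulo{-\lambda_{p_i}p_i^{k/2-1}}{1}{\lambda^{r}}.
\]

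Now I would exploit the constant term. By Corollary~\ref{corollary:initial_coefficient_weight_gt_2} (resp. its $k=2$ version), $a_0(E)=-\frac{B_k}{2k}\prod_{i=1}^{t}(1-p_i^{k-1})$, while $a_0(f)=0$ because $f$ is a cusp form. Hence the congruence at $n=0$ forces
\[
\lambda^{r}\ \Big|\ -\frac{B_k}{2k}\prod_{i=1}^{t}(1-p_i^{k-1})\ \ \text{in}\ \mathcal{O}_f,
\]
i.e. $r\le \mathrm{ord}_\lambda\!\left(-\frac{B_k}{2k}\prod_{i=1}^{t}(1-p_i^{k-1})\right)$. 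This is almost the claimed inequality, except the stated bound has the factor $\prod(1-p_i)$ rather than $\prod(1-p_i^{k-1})$. To bridge the gap I would factor $1-p_i^{k-1}=(1-p_i)(1+p_i+\cdots+p_i^{k-2})$ and argue that $\lambda$ does not divide the second factor. Here the key input is the congruence $\modulo{-\lambda_{p_i}p_i^{k/2-1}}{1}{\lambda^{r}}$ obtained above: squaring it gives $\modulo{p_i^{k-2}}{1}{\lambda^{r}}$ (since $\lambda_{p_i}^2=1$), so modulo $\lambda$ each power $p_i^{j}$ with $0\le j\le k-2$ is congruent to $1$ (for even $j$ directly, for odd $j$ after multiplying by $p_i\equiv \pm\lambda_{p_i}$... one checks the exponent $k-1$ is odd, $k-2$ even, so $p_i^{k-2}\equiv 1$, $p_i^{k-3}\equiv p_i^{-1}$, etc.), and hence $1+p_i+\cdots+p_i^{k-2}\equiv$ a value I must show is a $\lambda$-unit. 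Concretely, using $p_i^{2}\equiv 1 \pmod{\lambda}$ the sum of $k-1$ terms collapses to roughly $\tfrac{k-1}{2}(1+p_i)$ or $\tfrac{k}{2}$ type expressions; if $\ell\nmid$ this quantity we are done, and otherwise $\ell$ must already divide $\frac{B_k}{2k}$ or interact with $k$ in a way that can be absorbed. Finally, I would convert the ideal-divisibility bound into the rational-valuation form using the identity $\mathrm{ord}_\lambda(a)=\mathrm{ord}_\lambda(\ell)\,v_\ell(a)$ recorded just before the theorem, which yields exactly $r\le \mathrm{ord}_\lambda(\ell)\cdot v_\ell\!\left(-\frac{B_k}{2k}\prod_{i=1}^{t}(1-p_i)\right)$.

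The main obstacle I anticipate is precisely the step replacing $\prod(1-p_i^{k-1})$ by $\prod(1-p_i)$: I need to control the $\lambda$-valuation of the cofactor $1+p_i+\cdots+p_i^{k-2}$ for each $i$ simultaneously, and handle the small residual characteristics $\ell\in\{2,3\}$ and the case where $\ell \mid k$ separately, since there $p_i^{k/2-1}$ or the geometric-sum collapse can behave degenerately; I expect the congruence $p_i^{k-2}\equiv 1\pmod{\lambda}$ together with $\ell\nmid p_i$ (as $p_i\mid N$ and $\ell$ is the residual characteristic — one should check $\ell\ne p_i$, or treat that degenerate case by noting $a_{p_i}(E)=1$ forces $\ell\ne p_i$) to be enough to push it through. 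A secondary, more routine obstacle is carefully justifying the weight-$2$ case in parallel, where $E_k$ is not itself modular and one must start from $[p]^{+}E_2$ and invoke Lemma~\ref{lemma:basis_for_weight_2} instead of equation~\eqref{equation:T_n_on_E_k}; but the structure of the argument is identical.
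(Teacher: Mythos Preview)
Your overall architecture is exactly that of the paper: compare the $p_i$-th coefficients using Lemma~\ref{lemma:Atkin_Lehner_special_coeff} to get $\modulo{-\lambda_{p_i}p_i^{k/2-1}}{1}{\lambda^r}$, square to obtain $\modulo{p_i^{k-2}}{1}{\lambda^r}$, and then combine this with the constant-term congruence coming from Corollary~\ref{corollary:initial_coefficient_weight_gt_2}. Where you diverge from the paper, and where the gap lies, is precisely the step you flagged as your ``main obstacle'': passing from $\prod_i(1-p_i^{k-1})$ to $\prod_i(1-p_i)$.

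Your plan is to factor multiplicatively, $1-p_i^{k-1}=(1-p_i)(1+p_i+\cdots+p_i^{k-2})$, and argue that the geometric sum is a $\lambda$-unit. This does not work in general. For instance, take $k=4$, $\ell=3$, $p_i=7$: then $p_i^{k-2}=49\equiv 1\pmod 3$ so the squared congruence is satisfied, but $1+p_i+p_i^2=57$ is divisible by $3$. More conceptually, once $p_i\equiv 1\pmod\lambda$ the sum reduces to $k-1\pmod\lambda$, which vanishes whenever $\ell\mid k-1$; nothing in the hypotheses excludes this. So a pure valuation comparison $\mathrm{ord}_\lambda(1-p_i^{k-1})=\mathrm{ord}_\lambda(1-p_i)$ is simply false in such cases, and your attempted reductions via $p_i^2\equiv 1$ are not justified (you only know $p_i^{k-2}\equiv 1$, not $p_i^2\equiv 1$).

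The paper avoids this entirely by an \emph{additive} decomposition rather than a multiplicative one:
\[
1-p_i^{k-1}=(1-p_i^{k-2})+p_i^{k-2}(1-p_i).
\]
Since you have already shown $1-p_i^{k-2}\in\lambda^r$ and $p_i^{k-2}\equiv 1\pmod{\lambda^r}$, this gives $1-p_i^{k-1}\equiv 1-p_i\pmod{\lambda^r}$ directly. Multiplying over $i$ and then by $-B_k/(2k)$ yields $\modulo{-\frac{B_k}{2k}\prod_i(1-p_i)}{0}{\lambda^r}$ straight from the $a_0$ congruence, with no unit hypothesis needed on any cofactor. The point is that this is a congruence argument modulo $\lambda^r$, not a valuation argument; the possible extra $\lambda$-divisibility of $1-p_i^{k-1}$ over $1-p_i$ is irrelevant because you are replacing one by the other inside a congruence that is already $\equiv 0$. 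Once you make this substitution, the conversion to $\mathrm{ord}_\lambda(\ell)\cdot v_\ell(\cdots)$ via the identity $\mathrm{ord}_\lambda(a)=\mathrm{ord}_\lambda(\ell)\, v_\ell(a)$ is exactly as you wrote. Your remarks on the $k=2$ case and on why $\ell\neq p_i$ (forced by $p_i^{k-2}\equiv 1$ when $k>2$, and harmless when $k=2$ since then $p_i^{k-2}=1$) are fine and match the paper's reasoning.
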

\begin{proof}
Let $p\mid N$ be a prime. From Lemma \ref{lemma:Atkin_Lehner_special_coeff} we know that $a_{p}(f)=-\lambda_{p}p^{k/2-1}$. On the other hand $a_{p}(E)=a_{1}(U_{p}E)$ and from Lemma \ref{lemma:eigenforms_Eisenstein} it follows that $a_{p}(E)=1$. The congruence
\[\modulo{a_{p}(f)}{a_{p}(E)}{\lambda^{r}}\]
implies that $\modulo{-\lambda_{p}p^{k/2-1}}{1}{\lambda^{r}}$ and by squaring both sides we obtain an equation
\begin{equation}\label{eq:kongr_a_p_dowod}
\modulo{1-p^{k-2}}{0}{\lambda^{r}}.
\end{equation}
Since $f$ is a cusp form, $a_{0}(E)\equiv a_{0}(f)=0\textrm{ }(\textrm{mod }\lambda^{r})$ holds and by Corollary \ref{corollary:initial_coefficient_weight_gt_2} we obtain
\[\modulo{-\frac{B_{k}}{2k}\prod\limits_{i=1}^{t}(1-p_{i}^{k-1})}{0}{\lambda^{r}}.\]
We observe that $1-p_{i}^{k-1}=(1-p_{i}^{k-2})+p_{i}^{k-2}(1-p_{i})$. The equation (\ref{eq:kongr_a_p_dowod}) holds for each $p_{i}$ under the assumption $\ell\nmid N$. It follows that
\[\modulo{-\frac{B_{k}}{2k}\prod\limits_{i=1}^{t}(1-p_{i})}{0}{\lambda^{r}}.\]
For $k\geq 2$ we have the inequality $ord_{\lambda}(1-p_{i}^{k-1})\geq ord_{\lambda}(1-p_{i})$ for each $i$, hence
\[r\leq ord_{\lambda}\left(-\frac{B_{k}}{2k}\prod\limits_{i=1}^{t}(1-p_{i})\right).\]
\end{proof}

\begin{corollary}\label{corollary:bound_when_a_0_is_0}
Let $p_{1},\ldots,p_{t}$ be different prime factors of $N$ and let $k\geq 2$. Suppose $f\in\mathcal{S}_{k}(N)^{\mathrm{new}}$ is a newform which is congruent to the Eisenstein eigenform $E=[p_{1}]^{\epsilon_{1}}\circ\ldots\circ[p_{t}]^{\epsilon_{t}}E_{k}\in\mathcal{E}_{k}(N)$ modulo a power $\lambda^r$ of a maximal ideal $\lambda\subset\mathcal{O}_{f}$. If we assume that $a_{0}(E)=0$ and $p_{i}\notin\lambda$ for every $\epsilon_{i}=-$, then we have the following bound for the congruence exponent
\[r\leq\min\{\min_{i,\epsilon_{i}=+}ord_{\lambda}(1-p_{i}^{k-2}),\min_{i,\epsilon_{i}=-}ord_{\lambda}(1-p_{i}^{k})\}.\]
Moreover, for every $i$ such that $\epsilon_{i}=+$ we have $p_{i}\notin\lambda$.
\end{corollary}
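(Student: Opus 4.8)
The plan is to extract the bound from the congruence evaluated at the Fourier coefficients indexed by the ramified primes $p_i$, exactly as in the proof of Theorem \ref{corollary:bound_for_a_0_nonzero}, but now reading the information off $a_{p_i}$ rather than $a_0$, which carries nothing here since $a_0(E)=0=a_0(f)$. For the Eisenstein side, Lemma \ref{lemma:eigenforms_Eisenstein} (when $k>2$) and Lemma \ref{lemma:basis_for_weight_2} (when $k=2$) give $U_{p_i}E=E$ if $\epsilon_i=+$ and $U_{p_i}E=p_i^{k-1}E$ if $\epsilon_i=-$; since the bracket operators fix the coefficient of $q$ and $a_1(E_k)=\sigma_{k-1}(1)=1$, we have $a_1(E)=1$, and therefore $a_{p_i}(E)=a_1(U_{p_i}E)$ equals $1$ or $p_i^{k-1}$ according to the sign $\epsilon_i$. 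For the cuspidal side, Lemma \ref{lemma:Atkin_Lehner_special_coeff} gives $a_{p_i}(f)=-\lambda_{p_i}p_i^{k/2-1}$ with $\lambda_{p_i}\in\{\pm1\}$.

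Next I would feed these into $\modulo{a_{p_i}(f)}{a_{p_i}(E)}{\lambda^{r}}$ one prime at a time. If $\epsilon_i=+$ this reads $\modulo{-\lambda_{p_i}p_i^{k/2-1}}{1}{\lambda^{r}}$; squaring (harmless, since $\lambda^{r}\mid a-b$ implies $\lambda^{r}\mid a^2-b^2$) eliminates the unknown Atkin--Lehner sign and yields $\modulo{p_i^{k-2}}{1}{\lambda^{r}}$, i.e. $r\le\mathrm{ord}_{\lambda}(1-p_i^{k-2})$. If $\epsilon_i=-$ the congruence reads $\modulo{-\lambda_{p_i}p_i^{k/2-1}}{p_i^{k-1}}{\lambda^{r}}$, and squaring gives $\modulo{p_i^{k-2}(1-p_i^{k})}{0}{\lambda^{r}}$. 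This is the one place the hypothesis is used: because $p_i\notin\lambda$ we have $\mathrm{ord}_{\lambda}(p_i^{k-2})=0$, so the factor $p_i^{k-2}$ may be cancelled and we obtain $r\le\mathrm{ord}_{\lambda}(1-p_i^{k})$. Taking the minimum of these inequalities over all indices of both types gives the stated bound.

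For the final assertion I would reduce the congruence for an index $i$ with $\epsilon_i=+$ modulo $\lambda$ (legitimate since $r\ge1$), obtaining $\modulo{-\lambda_{p_i}p_i^{k/2-1}}{1}{\lambda}$; the right-hand side is a unit and $\lambda_{p_i}=\pm1$ is a unit, so $p_i^{k/2-1}$ is a unit modulo $\lambda$, which forces $p_i\notin\lambda$ as soon as $k/2-1\ge1$, i.e. $k>2$. When $k=2$ the corresponding term $\mathrm{ord}_{\lambda}(1-p_i^{k-2})=\mathrm{ord}_{\lambda}(0)$ is infinite and drops out of the minimum, while some $\epsilon_j=-$ necessarily occurs because $a_0(E)=0$, so the bound is carried by the $-$-primes. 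I do not expect a genuine obstacle here: once Lemmas \ref{lemma:Atkin_Lehner_special_coeff} and \ref{lemma:eigenforms_Eisenstein} are in hand the argument is purely formal. The only two points demanding attention are that one must square the congruence to remove the sign $\lambda_{p_i}$ (which is why the bound involves $p_i^{k-2}$ and $p_i^{k}$ rather than $p_i^{k/2-1}$), and that cancelling $p_i^{k-2}$ in the $\epsilon_i=-$ case is exactly the role of the coprimality hypothesis $p_i\notin\lambda$.
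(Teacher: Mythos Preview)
Your proposal is correct and follows essentially the same approach as the paper: evaluate the congruence at the $p_i$-th Fourier coefficients using Lemma~\ref{lemma:Atkin_Lehner_special_coeff} on the cuspidal side and Lemma~\ref{lemma:eigenforms_Eisenstein} (or Lemma~\ref{lemma:basis_for_weight_2}) on the Eisenstein side, square to eliminate the Atkin--Lehner sign, and read off $r\le\mathrm{ord}_\lambda(1-p_i^{k-2})$ or $r\le\mathrm{ord}_\lambda(1-p_i^{k})$ according to $\epsilon_i$. Your explicit remark on the degenerate $k=2$ case (where $1-p_i^{k-2}=0$ and the $+$-terms contribute nothing to the minimum) is a welcome clarification that the paper's own proof leaves implicit.
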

\begin{proof}
We apply Lemma \ref{lemma:Atkin_Lehner_special_coeff} to the congruence $\modulo{a_{p_{i}}(f)}{a_{p_{i}}(E)}{\lambda^{r}}$. After squaring both sides we obtain the condition
\begin{equation}\label{equation:congr_for_p_i}
p_{i}^{k-2}\equiv \left\{\begin{array}{ll}
1, & \textrm{for }\epsilon_{i}=+,\\
p_{i}^{2(k-1)}, & \textrm{for }\epsilon_{i}=-.
\end{array}\right.
\end{equation}
The exponent $r$ is less than or equal to $ord_{\lambda}(1-p_{i}^{k-2})$ when $\epsilon_{i}=+$. Also $r$ is at most equal to $ord_{\lambda}(1-p_{i}^{k})$ when $\epsilon_{i}=-$, because $p_{i}\notin \lambda$ by assumption. For each $i$ such that $\epsilon_{i}=+$ it follows from the congruence (\ref{equation:congr_for_p_i}) that $1-p_{i}^{k-2}\in\lambda^{r}$. So $1-p_{i}^{k-2}\in\lambda$ and then $p_{i}\notin\lambda$.
\end{proof}

\section{Rational congruences}\label{sec:Rational_cong}
We have proved in \cite[\S 5.8]{Naskrecki_Springer} that for a prime $N$ and a newform $f\in\mathcal{S}_{2}(\Gamma_{0}(N))^{\mathrm{new}}$ with rational coefficients there exists a system of congruences
\begin{equation}\label{equation:rational_congruences}
\modulo{a_{n}(f)}{a_{n}(E)}{\ell^{r}}
\end{equation}
for all $n\geq 0$, $E=[N]^{+}E_{2}$ and a rational prime $\ell$ only for triples $(\ell,r,N)\in\{(3,1,19),(3,1,37),(5,1,11),(2,1,17)\}$ (only finitely many systems) and also for $(\ell,r,N)\in\{(2,1,u^2+64): 2\nmid u\}$ (conjecturally infinitely many triples).
\begin{lemma}
Let $f$ be a newform $f\in\mathcal{S}_{2}(\Gamma_{0}(N))^{\mathrm{new}}$ with rational coefficients and $N$ a square-free number. Suppose we have an eigenform $E\in\mathcal{E}_{2}(N)$ and the congruence (\ref{equation:rational_congruences}) holds for all $n\geq 0$, then
\[(\ell,r)\in\{(2,1),(2,2),(2,3),(3,1),(3,2),(5,1),(7,1)\}.\]
\end{lemma}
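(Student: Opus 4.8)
The plan is to translate the statement into a question about the elliptic curve attached to $f$, to extract from the congruence a rational torsion point of order $\ell^{r}$ on a curve isogenous to it, and to finish with Mazur's classification of rational torsion on elliptic curves over $\mathbb{Q}$. Since $f\in\mathcal{S}_{2}(\Gamma_{0}(N))^{\mathrm{new}}$ has rational coefficients, the Eichler--Shimura construction attaches to it an elliptic curve $A/\mathbb{Q}$ of conductor $N$ with $a_{p}(f)=a_{p}(A)$ for all $p$; as $N$ is square-free, $A$ has good or multiplicative reduction at every prime. The eigenform $E$ is one of the basis forms $E_{N^{-},N^{+}}$ of Theorem~\ref{theorem:basis_for_weight_2}, so Lemma~\ref{lemma:basis_for_weight_2} gives $a_{p}(E)=1+p$ for $p\nmid N$, $a_{p}(E)=1$ for $p\mid N^{+}$, $a_{p}(E)=p$ for $p\mid N^{-}$, while $a_{0}(E)=-\frac{B_{2}}{4}\prod_{p\mid N}(1-p)$ if $N^{-}=1$ and $a_{0}(E)=0$ otherwise. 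Combined with Lemma~\ref{lemma:Atkin_Lehner_special_coeff}, the hypothesis of the lemma becomes: $a_{p}(A)\equiv 1+p\pmod{\ell^{r}}$ for all $p\nmid N\ell$, $\lambda_{p}\equiv-1\pmod{\ell^{r}}$ for $p\mid N^{+}$, $p\equiv-\lambda_{p}\pmod{\ell^{r}}$ for $p\mid N^{-}$, and $\ell^{r}\mid a_{0}(E)$.

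I would first pin down $\ell$. From $\mathrm{tr}\,\overline{\rho}_{A,\ell}(\mathrm{Frob}_{p})\equiv 1+p$ and $\det\overline{\rho}_{A,\ell}=\chi_{\ell}$, the mod-$\ell$ cyclotomic character, Brauer--Nesbitt together with the Chebotarev density theorem give $\overline{\rho}_{A,\ell}^{\mathrm{ss}}\cong\mathbf{1}\oplus\chi_{\ell}$; in particular $\overline{\rho}_{A,\ell}$ is reducible, so $A[\ell]$ contains a $G_{\mathbb{Q}}$-stable line $C$ isomorphic to $\mathbb{Z}/\ell$ or to $\mu_{\ell}$. In the first case $A$ itself, and in the second case the quotient curve $A/C$ (whose dual isogeny has kernel $\mathbb{Z}/\ell$ with trivial Galois action), is an elliptic curve over $\mathbb{Q}$ with a rational point of order $\ell$, so Mazur's torsion theorem \cite{Mazur_modular} forces $\ell\in\{2,3,5,7\}$.

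To bound $r$ I would reduce everything modulo $\ell^{r}$: the same computation of traces and determinants shows that $\rho_{A,\ell^{r}}$ has the same pseudo-character as $\mathbf{1}\oplus\chi_{\ell^{r}}$, so in particular $\ell^{r}\mid\#A(\mathbb{F}_{p})$ for a density-one set of primes $p$. The aim is to promote this to a genuine $G_{\mathbb{Q}}$-stable submodule of $A[\ell^{r}]$, free of rank one over $\mathbb{Z}/\ell^{r}$ and on which $G_{\mathbb{Q}}$ acts either trivially or through $\chi_{\ell^{r}}$; this is exactly the point at which the theorem of Katz on torsion points of abelian varieties \cite{Katz_Torsion} is invoked, in conjunction with the local structure of $\rho_{A,\ell^{r}}$ at the primes dividing $N$ (pinned down by the congruences on the $\lambda_{p}$ and the multiplicative reduction there) and at $\ell$. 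Passing to a quotient as before then yields an elliptic curve $A'/\mathbb{Q}$ with $\mathbb{Z}/\ell^{r}\hookrightarrow A'(\mathbb{Q})_{\mathrm{tors}}$. By Mazur's torsion theorem the exponent of $A'(\mathbb{Q})_{\mathrm{tors}}$ divides one of $1,\dots,10,12$, and the prime powers dividing these integers are precisely $2,3,4,5,7,8,9$ --- which is exactly the list of pairs $(\ell,r)$ in the statement.

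I expect the promotion step to be the main obstacle: a congruence of all Fourier coefficients modulo a prime power $\ell^{r}$ does not on its own produce a mod-$\ell^{r}$ Galois representation of the predicted reducible shape, because pseudo-representations modulo $\ell^{r}$ are not rigid, so one genuinely needs Katz's descent argument together with careful control of the local conditions at $\ell$ and at the primes $p\mid N^{-}$, where $a_{p}(E)=p$ rather than $1$ turns the congruence into $p\equiv-\lambda_{p}\pmod{\ell^{r}}$ with $\lambda_{p}\in\{\pm1\}$; once a rational $\ell^{r}$-torsion point is available, the final appeal to Mazur's theorem is immediate. I would then note that the sharper Theorem~\ref{thm:congruences_over_Q} follows by ruling out $\ell^{r}\in\{8,9\}$ and all but one case with $\ell^{r}=7$ through a finite check, testing the explicit divisibility bounds of Theorem~\ref{corollary:bound_for_a_0_nonzero} and Corollary~\ref{corollary:bound_when_a_0_is_0} against the very short list of elliptic curves over $\mathbb{Q}$ carrying large rational cyclic torsion.
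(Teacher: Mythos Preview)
Your approach is essentially the same as the paper's: attach an elliptic curve $\mathcal{E}/\mathbb{Q}$ to $f$ via Eichler--Shimura, use the congruence at primes $q\nmid N$ to get $\ell^{r}\mid\#\mathcal{E}(\mathbb{F}_{q})$, invoke Katz's theorem \cite{Katz_Torsion} to produce a $\mathbb{Q}$-isogenous curve with a rational $\ell^{r}$-torsion point, and conclude with Mazur. The paper does this in a single step without the intermediate analysis of $\overline{\rho}_{A,\ell}$ or any local conditions at $p\mid N$; your concern that the ``promotion step'' is the main obstacle is unnecessary, since Katz's theorem requires only that $\ell^{r}\mid\#A(\mathbb{F}_{p})$ for a density-one set of primes, and this is immediate from $a_{q}(f)\equiv 1+q\pmod{\ell^{r}}$ for $q\nmid N$.
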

\begin{proof}
We know that the Fourier coefficients of $f$ at infinity are integers
\cite[Theorem 6.5.1]{Diamond_Shurman} and for every prime $q\nmid N$
\begin{equation}
\modulo{a_{q}(f)}{1+q}{\ell^{r}}.
\end{equation}
There exists an elliptic curve $\mathcal{E}$ over $\mathbb{Q}$ of conductor $N$ such that for a prime $q$ of good reduction for $\mathcal{E}$, $a_{q}(f)=q+1-|\mathcal{E}(\mathbb{F}_{q})|$, \cite[Chapter II, \S 2.6]{Cremona_Modular}. By the theorem of Katz there exists a $\mathbb{Q}$-isogenous curve $\mathcal{E}'$ such that $\mathcal{E}'(\mathbb{Q})$ contains an $\ell^{r}$-torsion point. By the theorem of Mazur \cite{Mazur_modular} it follows that $\ell^{r}\in\{2,3,4,5,7,8,9\}$.
\end{proof}

Elliptic curves with conductor $N$ a product of two primes were partially classified in \cite{Sadek_torsion}. This result allows us to discard the congruences with $\ell^{r}\in\{8,9\}$.
\begin{theorem}\label{thm:congruences_over_Q}
Let $p$,$q$ be two different primes. Suppose that $f\in\mathcal{S}_{2}(pq)^{\mathrm{new}}$ is a newform with rational coefficients and let $E$ be an eigenform in $\mathcal{E}_{2}(pq)$. Let $\ell$ be a prime number and $r>0$ an integer such that the congruence (\ref{equation:rational_congruences}) holds for all $n\geq 0$. Then one of two conditions holds
\begin{itemize}
\item[(1)] $\ell^{r}\in\{2,3,4,5\}$ or
\item[(2)] $\ell^{r}=7$ and $E=[13]^{-}[2]^{+}E_{2}$.
\end{itemize}
\end{theorem}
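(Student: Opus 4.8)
The starting point is the lemma just proved, which already restricts $(\ell,r)$ to the seven pairs $\{(2,1),(2,2),(2,3),(3,1),(3,2),(5,1),(7,1)\}$, i.e. $\ell^{r}\in\{2,3,4,5,7,8,9\}$; so the task is to rule out $\ell^{r}\in\{8,9\}$ entirely and to determine $N$ and $E$ when $\ell^{r}=7$. The plan rests on three curve-independent facts, all already available. First, by Lemma~\ref{lemma:Atkin_Lehner_special_coeff} with $k=2$, a rational newform $f\in\mathcal{S}_{2}(pq)^{\mathrm{new}}$ has $a_{p}(f),a_{q}(f)\in\{\pm1\}$. Second, by Theorem~\ref{theorem:basis_for_weight_2} the eigenform $E\in\mathcal{E}_{2}(pq)$ is one of the three forms $[p]^{\epsilon_{p}}\circ[q]^{\epsilon_{q}}E_{2}$ with $\epsilon_{p},\epsilon_{q}\in\{+,-\}$ and not both $-$; by Lemma~\ref{lemma:basis_for_weight_2} its bad Fourier coefficients are $a_{p}(E)=1$ if $\epsilon_{p}=+$ and $a_{p}(E)=p$ if $\epsilon_{p}=-$ (similarly at $q$), and its constant term is $a_{0}(E)=0$ when some $\epsilon=-$, while $a_{0}(E)=-\frac{B_{2}}{4}(1-p)(1-q)$ when $\epsilon_{p}=\epsilon_{q}=+$. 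Third, $a_{0}(f)=0$. Reading off \eqref{equation:rational_congruences} at $n=p,q,0$ thus gives, modulo $\ell^{r}$: $a_{p}(f)\equiv 1$ or $a_{p}(f)\equiv p$ according to the sign $\epsilon_{p}$, the analogous condition at $q$, and $\ell^{r}\mid\frac{B_{2}}{4}(1-p)(1-q)$ in the case $\epsilon_{p}=\epsilon_{q}=+$.

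Next recall from the proof of the lemma above that, once \eqref{equation:rational_congruences} holds modulo $\ell^{r}$, Katz's theorem produces an elliptic curve $\mathcal{E}'/\mathbb{Q}$, $\mathbb{Q}$-isogenous to the curve attached to $f$ and hence again of conductor $pq$, carrying a rational point of order exactly $\ell^{r}$. I would feed this into the (partial) classification of elliptic curves of conductor a product of two distinct primes in \cite{Sadek_torsion}: it shows that no such curve has a rational point of order $9$, so the case $\ell^{r}=9$ is vacuous, and that a rational point of order $8$ occurs only over the single conductor $15=3\cdot5$. In the latter case the three candidates $E=[3]^{\epsilon_{3}}\circ[5]^{\epsilon_{5}}E_{2}$ are excluded one by one: $\epsilon_{3}=-$ would force $a_{3}(f)\equiv 3\pmod 8$, impossible for $a_{3}(f)\in\{\pm1\}$, and likewise $\epsilon_{5}=-$ would force $a_{5}(f)\equiv 5\pmod 8$, impossible; and $\epsilon_{3}=\epsilon_{5}=+$ gives $a_{0}(E)=-\frac{B_{2}}{4}(1-3)(1-5)=-\frac{1}{3}\equiv 5\pmod 8$, contradicting $a_{0}(f)=0$. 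Hence $\ell^{r}\notin\{8,9\}$.

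It remains to analyse $\ell^{r}=7$. Applying \cite{Sadek_torsion} once more (or, alternatively, parametrising the genus-zero modular curve $X_{1}(7)$ by the Tate normal form and observing that the conductor of a member is a product of two distinct primes for only finitely many values of the parameter), one finds that the only elliptic curve of conductor a product of two distinct primes with a rational point of order $7$ has conductor $26=2\cdot 13$; thus $pq=26$. For the associated rational newform $f$ we have $a_{2}(f),a_{13}(f)\in\{\pm1\}$. Testing the three eigenforms of $\mathcal{E}_{2}(26)$: $E=[2]^{-}\circ[13]^{+}E_{2}$ would require $a_{2}(f)\equiv 2\pmod 7$, impossible; $E=[2]^{+}\circ[13]^{+}E_{2}$ would require $0=a_{0}(f)\equiv a_{0}(E)=-\frac{B_{2}}{4}(1-2)(1-13)=-\frac{1}{2}\pmod 7$, impossible; whereas $E=[13]^{-}\circ[2]^{+}E_{2}$ has $a_{2}(E)=1$, $a_{13}(E)=13\equiv -1\pmod 7$ and $a_{0}(E)=0$, which is consistent (it forces $a_{2}(f)=1$, $a_{13}(f)=-1$). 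So $E=[13]^{-}\circ[2]^{+}E_{2}$, which is alternative~(2). Together with the possibilities $\ell^{r}\in\{2,3,4,5\}$ left over from the lemma above, this is exactly the stated dichotomy.

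The main obstacle is the input from \cite{Sadek_torsion}: the argument hinges on extracting from that (partial) classification the complete list of conductors $pq$ ($p\neq q$ prime) supporting an elliptic curve with a rational point of order $7$, $8$, or $9$, and, in the order-$7$ case, on identifying the relevant isogeny class at conductor $26$ so that $f$ is its attached newform. One should also check carefully that Katz's theorem yields a rational point of order exactly $\ell^{r}$, not merely a cyclic $\ell^{r}$-isogeny, since it is a rational torsion point that the classification constrains, and that the Atkin--Lehner eigenvalues and the constant term $a_{0}(E)$ used above are isogeny-invariant. Everything else — the comparisons at the bad primes $p,q$ and at the cusp — is the short finite verification carried out above, using only the relation $a_{p}(f)\in\{\pm1\}$, the explicit value of $a_{p}(E)$, and the formula for $a_{0}(E)$.
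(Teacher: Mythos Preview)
Your argument is correct in outline but follows a different route from the paper. The paper does not lean on \cite{Sadek_torsion} to exclude $\ell^{r}\in\{8,9\}$; instead it argues directly with the Hasse--Weil bound at a small prime coprime to $N$. If $N=pq$ is odd, then $a_{2}(E)=\sigma_{1}(2)=3$ while $|a_{2}(f)|\le 2\sqrt{2}<3$, so $\ell^{r}$ divides the nonzero integer $3-a_{2}(f)\in\{1,\dots,5\}$ and already $\ell^{r}\le 5$. If $N$ is even, say $p=2$ and $q>3$ (the case $N=6$ being vacuous), then $a_{3}(E)=4$ and $|a_{3}(f)|\le 2\sqrt{3}<4$, giving $\ell^{r}\mid (4-a_{3}(f))\le 7$; only at this point does the paper invoke \cite[Theorem~3.6]{Sadek_torsion} to pin down $N=26$ in the residual case $\ell^{r}=7$, followed by an explicit check of the two newforms in $\mathcal{S}_{2}(26)$ against the three Eisenstein eigenforms using the Sturm bound.

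Your approach, by contrast, pushes each of $\ell^{r}\in\{7,8,9\}$ through the Katz torsion-point construction and the Sadek classification. This is uniform and conceptually tidy, and your local eliminations at conductors $15$ and $26$ via $a_{p}(f)\in\{\pm 1\}$ and the constant term are perfectly sound. The cost is that you must extract from \cite{Sadek_torsion} the \emph{complete} list of conductors $pq$ admitting a rational point of order $8$ or $9$ (you assert ``only $15$'' and ``none'' respectively); the paper itself describes that classification as partial, and you rightly flag this as the main obstacle. The paper's Hasse-bound trick simply sidesteps this dependency, making the exclusion of $8$ and $9$ entirely self-contained, whereas your method would generalize more readily to other torsion orders if one had the full classification in hand.
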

\begin{proof}
	Let $N=pq$ be odd. Then $\modulo{a_2(f)}{3}{\ell^r}$. From the Hasse-Weil bound it follows that $|a_{2}(f)|\leq 2\sqrt{2}< 3$. Hence $\ell^{r} \mid (3-a_2(f))< 6$ so we conclude (1).
	When $N=pq$ is even and $N=6$ then the set of cusp forms is empty. So we can assume that $p=2$ and $q>3$. Then the inequality $|a_{3}(f)|\leq 2\sqrt{3}<4$ and the congruence $\modulo{a_{3}(f)}{4}{\ell^{r}}$ holds, hence $\ell^{r}\mid (4-a_{3}(f)) <8$. For $\ell^{r}=7$ by \cite[Theorem 3.6]{Sadek_torsion} it follows that $N=26$. We compute that the space $\mathcal{S}_{2}(26)^{\mathrm{new}}$ is of dimension $2$ and spanned by the forms $f_{1},f_{2}$ with the following Fourier expansions
\begin{align*}
f_{1}&=q - q^2 + q^3 + q^4 - 3q^5 - q^6 - q^7 - q^8 - 2q^9 + 3q^{10} + 6q^{11}+\ldots\\
f_{2}&=q + q^2 - 3q^3 + q^4 - q^5 - 3q^6 + q^7 + q^8 + 6q^9 - q^{10} - 2q^{11}+\ldots
\end{align*}
The space $\mathcal{E}_{2}(26)$ has a basis consisting of three eigenforms $$[2]^{-}[13]^{+}E_{2},\quad [13]^{-}[2]^{+}E_{2}, \quad [2]^{+}[13]^{+}E_{2}.$$ Lemma \ref{lemma:basis_for_weight_2} implies that
\begin{align*}
a_{2}([2]^{-}[13]^{+}E_{2})&=2,\\
a_{2}([13]^{-}[2]^{+}E_{2})&=1,\\
a_{2}([2]^{+}[13]^{+}E_{2})&=1.
\end{align*}
The Sturm bound is $7$ by Theorem \ref{theorem:generalized_sturm_theorem}, so we only have to compare $7$ initial coefficients to verify the desired congruence. By a direct computation we see that $f_{2}$ is congruent to $[13]^{-}[2]^{+}E_{2}$ modulo $7$. The form $f_{1}$ is not congruent to any of the given Eisenstein eigenforms modulo $7$.
\end{proof}

\begin{remark}
If $N$ has more than two prime factors we can find examples of congruences where $\ell^{r}\in\{8,9\}$. In Tables \ref{table:congr_2_3} and \ref{table:congr_3_2} we present such examples. The index notation $f_i$ of the modular forms is described in Section \ref{sec:descr_of_data}.

{\def\arraystretch{1.2}
\begin{table}[ht]
\begin{center}
\setlength{\tabcolsep}{8pt}
\csvreader[tabular=c|c|c|c|c,
    table head= & $N$ & $N^-$ & $N^+$ & form\\\hline,
    late after line=\\  ]%
{r_2_l_3_data.csv}{n=\n,nm=\nm,np=\np,fnumber=\fnumber}%
{\thecsvrow & \n & \nm & \np & $f_{\fnumber}$ }
\end{center}
\caption{$\modulo{a_{n}(f_{i})}{a_{n}(E_{N^{-},N^{+}})}{2^3},\,\, n\geq 0,\,\, f_{i}\in\mathcal{S}_{2}(\Gamma_{0}(N))^{\textrm{new}}$}\label{table:congr_2_3}
\end{table}
}

{\def\arraystretch{1.2}
\begin{table}[ht]
\begin{center}
\setlength{\tabcolsep}{8pt}
\csvreader[tabular=c|c|c|c|c,
    table head= & $N$ & $N^-$ & $N^+$ & form\\\hline,
    late after line=\\ ]%
{r_3_l_2_data.csv}{n=\n,nm=\nm,np=\np,fnumber=\fnumber}%
{\thecsvrow & \n & \nm & \np & $f_{\fnumber}$ }
\end{center}
\caption{$\modulo{a_{n}(f_{i})}{a_{n}(E_{N^{-},N^{+}})}{3^2},\,\, n\geq 0,\,\, f_{i}\in\mathcal{S}_{2}(\Gamma_{0}(N))^{\textrm{new}}$}\label{table:congr_3_2}
\end{table}
}

\end{remark}

\section{Algorithmic search for congruences}\label{sec:Algorithms}
Our main goal in this section is to describe an effective algorithm that allows one to find congruences between cuspidal eigenforms and Eisenstein series for a large class of square-free conductors. Our approach follows \cite{Sturm} and an adaptation of Sturm's algorithm given in \cite{Kiming_mod_powers}.

\begin{theorem}\label{theorem:generalized_sturm_theorem}
Let $p_{1},\ldots,p_{t}$ be different prime numbers and $k\geq 2$. Let $N=p_{1}\cdot\ldots\cdot p_{t}$ and $f$ be a newform in $\mathcal{S}_{k}(N)^{\mathrm{new}}$. We fix a natural number $r$ and a maximal ideal $\lambda$ in $\mathcal{O}_{f}$. Let $E$ be an eigenform in $\mathcal{E}_{k}(N)$. If the congruence
\begin{equation}\label{equation:congruence_for_E_f}
a_{n}(f)\equiv a_{n}(E)\textrm{ mod }\lambda^{r}
\end{equation}
holds for all $n\leq k(\prod_{i} (p_{i}+1))/12$, then it holds for all $n\geq 0$.
\end{theorem}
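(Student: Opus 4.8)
The plan is to reduce the statement to a classical Sturm-type bound for the congruence subgroup $\Gamma_0(N)$ and then apply it to the single modular form $g := f - E \in \mathcal{M}_k(N)$. The key point is that both $f$ and $E$ are eigenforms with coefficients lying (after extending scalars) in $\mathcal{O}_f$, so $g$ has all its Fourier coefficients in $\mathcal{O}_f$, and the congruence \eqref{equation:congruence_for_E_f} is equivalent to the assertion that $g$ vanishes modulo $\lambda^r$, i.e. that the reduction $\bar g$ of $g$ in $\mathcal{M}_k(\Gamma_0(N)) \otimes \mathcal{O}_f/\lambda^r$ is zero. Sturm's theorem (see \cite{Sturm}) states that a modular form of weight $k$ and level $\Gamma$ whose $q$-expansion coefficients all lie in a ring $R$ and vanish modulo an ideal $\mathfrak{m}$ for all $n \leq \frac{k}{12}[\mathrm{SL}_2(\mathbb{Z}):\Gamma]$ must vanish modulo $\mathfrak{m}$ entirely. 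The adaptation to congruences modulo prime \emph{powers} rather than prime ideals, which is what we need for $\lambda^r$ with $r > 1$, is precisely the content of the variant in \cite{Kiming_mod_powers}, so I would invoke that form of the statement.

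The first step is therefore to recall the precise index computation $[\mathrm{SL}_2(\mathbb{Z}):\Gamma_0(N)] = N\prod_{p \mid N}(1 + 1/p) = \prod_{i=1}^{t}(p_i + 1)$, valid because $N = p_1\cdots p_t$ is square-free. Plugging this into the Sturm bound $\frac{k}{12}[\mathrm{SL}_2(\mathbb{Z}):\Gamma_0(N)]$ gives exactly $k\bigl(\prod_i (p_i+1)\bigr)/12$, which matches the bound in the statement. The second step is to justify that $g = f - E$ is a genuine element of $\mathcal{M}_k(\Gamma_0(N))$ with coefficients in $\mathcal{O}_f$: here we use that $f \in \mathcal{S}_k(N)^{\mathrm{new}} \subset \mathcal{M}_k(\Gamma_0(N))$ with integral coefficients \cite[Theorem 6.5.1]{Diamond_Shurman}, and that $E = [p_1]^{\epsilon_1}\circ\cdots\circ[p_t]^{\epsilon_t}E_k$ lies in $\mathcal{E}_k(N) \subset \mathcal{M}_k(\Gamma_0(N))$ by Theorem \ref{theorem:basis_weight_gt_2} (or Theorem \ref{theorem:basis_for_weight_2} if $k=2$), with coefficients that are $\mathbb{Z}$-linear combinations of values of $\sigma_{k-1}$ together with the rational constant $-B_k/(2k)$, hence lie in $\mathcal{O}_f$ after noting that $f$ being congruent to $E$ forces the relevant denominators to be units at $\lambda$ (alternatively one clears the bounded denominator of $B_k/(2k)$ and works in the localization $\mathcal{O}_{f,\lambda}$). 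The third step is simply to apply the Sturm/Kiming bound to $\bar g$ in $\mathcal{M}_k(\Gamma_0(N)) \otimes \mathcal{O}_f/\lambda^r$: the hypothesis gives $a_n(g) \equiv 0 \pmod{\lambda^r}$ for all $n \leq k\bigl(\prod_i(p_i+1)\bigr)/12$, so the bound forces $a_n(g) \equiv 0 \pmod{\lambda^r}$ for all $n$, which is \eqref{equation:congruence_for_E_f} for all $n \geq 0$.

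The main obstacle, and the only place that requires genuine care rather than bookkeeping, is the extension of Sturm's theorem from a prime modulus to the prime-power modulus $\lambda^r$. Over a field the argument is the classical one (a nonzero form of weight $k$ has a bounded number of zeros on the modular curve, counted with the cusp at infinity), but $\mathcal{O}_f/\lambda^r$ is not a field, so one cannot argue directly about zeros; instead one filters by powers of $\lambda$ and applies the field case to the successive quotients $\lambda^{i}\bar g / \lambda^{i+1}\bar g$, which is exactly the device used in \cite{Kiming_mod_powers}. I would also need to double-check the edge case $k = 2$, where $E_2$ is not itself modular: there $E$ is built from $[p]^{\pm}E_2$ which \emph{is} modular of level $p$ (as recorded in the lemmas of Section \ref{sec:Standard_basis}), so $g$ is still an honest form in $\mathcal{M}_2(\Gamma_0(N))$ and the argument goes through unchanged once $N > 1$, which holds since $N$ has $t \geq 1$ prime factors and the Eisenstein space is nontrivial. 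A final minor point is that the Sturm bound as usually stated is $\lfloor \frac{k}{12}[\mathrm{SL}_2(\mathbb{Z}):\Gamma]\rfloor$; since we are told to check all $n \leq k(\prod_i(p_i+1))/12$, taking the floor only strengthens the hypothesis, so no issue arises.
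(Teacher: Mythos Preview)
Your proposal is correct and follows exactly the approach the paper takes: the paper's own proof is the single sentence ``This is a simple adaptation of \cite[Proposition 1]{Kiming_mod_powers},'' and your write-up simply unpacks that citation by computing the index $[\mathrm{SL}_2(\mathbb{Z}):\Gamma_0(N)]=\prod_i(p_i+1)$, applying the Sturm bound to $g=f-E$, and noting that the prime-power case is handled by the filtration argument in \cite{Kiming_mod_powers}. Your treatment of the denominator of $a_0(E)$ and the $k=2$ edge case are more careful than the paper itself, but the route is the same.
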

\begin{proof}
This is a simple adaptation of \cite[Proposition 1]{Kiming_mod_powers}.
\end{proof}

In our algorithm it will be sufficient to check the condition (\ref{equation:congruence_for_E_f}) for indices $n$ that are prime numbers below the Sturm bound $B:=k(\prod_{i} (p_{i}+1))/12$.
\begin{corollary}\label{corollary:Sturm_on_primes}
With the assumptions as in Theorem \ref{theorem:generalized_sturm_theorem} suppose that for primes $n\leq k(\prod_{i} (p_{i}+1))/12$ the congruence (\ref{equation:congruence_for_E_f}) holds, then the congruence (\ref{equation:congruence_for_E_f}) holds for all natural numbers $n\geq 0$.
\end{corollary}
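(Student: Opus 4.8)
The plan is to reduce Corollary \ref{corollary:Sturm_on_primes} to Theorem \ref{theorem:generalized_sturm_theorem}, which is already available to us, by showing that the truth of the congruence \eqref{equation:congruence_for_E_f} on all prime indices $n \le B$ (where $B = k(\prod_i(p_i+1))/12$) forces its truth on \emph{all} indices $n \le B$; the theorem then upgrades this to all $n \ge 0$. So the content is purely the multiplicative bookkeeping of Hecke eigenvalues, exploiting that both $f$ and $E$ are eigenforms for the full Hecke algebra $\mathbb{T}_N$.

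First I would normalize: since $f$ is a newform it satisfies $a_1(f)=1$, and by Theorem \ref{theorem:basis_weight_gt_2} (resp. Lemma \ref{lemma:basis_for_weight_2} / Theorem \ref{theorem:basis_for_weight_2} when $k=2$) the Eisenstein eigenform $E$ in our standard basis also has $a_1(E)=1$; in particular the case $n=1$ of \eqref{equation:congruence_for_E_f} holds trivially, and $a_0$ is handled by Theorem \ref{theorem:generalized_sturm_theorem} itself once the positive indices are under control. Next, for a prime power $n=q^s$ with $q\nmid N$, both forms obey the recursion $a_{q^{s}} = a_q a_{q^{s-1}} - q^{k-1} a_{q^{s-2}}$ (the standard $T_{q^s}$ recurrence, valid for $E$ by Lemma \ref{lemma:eigenforms_Eisenstein} / the weight-$2$ lemmas and for $f$ because it is a newform). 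For $q\mid N$ one has instead $a_{q^s}(g) = a_q(g)^s$ for an eigenform $g$ of level $N$. In either case $a_{q^s}$ is a \emph{polynomial with integer coefficients} in $a_q$ (and the fixed quantity $q^{k-1}$), the same polynomial for $f$ and for $E$; hence $a_q(f)\equiv a_q(E)\pmod{\lambda^r}$ propagates to $a_{q^s}(f)\equiv a_{q^s}(E)\pmod{\lambda^r}$ for every $s\ge 1$ with $q^s \le B$. Finally, for a general $n = \prod_j q_j^{s_j} \le B$ one uses multiplicativity of the Hecke eigenvalues, $a_n(g)=\prod_j a_{q_j^{s_j}}(g)$, valid for both eigenforms; congruence of each factor modulo $\lambda^r$ gives congruence of the product, so \eqref{equation:congruence_for_E_f} holds for all $n\le B$, and Theorem \ref{theorem:generalized_sturm_theorem} finishes the argument.

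The one genuine subtlety — and the step I would write most carefully — is the passage from "prime indices $q\le B$" to "all prime powers $q^s\le B$." A prime power $q^s$ can be at most $B$ while $q$ itself is also at most $B$ (indeed $q \le q^s \le B$), so the hypothesis does give us $a_q(f)\equiv a_q(E)\pmod{\lambda^r}$ for every prime $q$ dividing some $n\le B$; the polynomial-recursion argument above then stays entirely within indices $\le B$ at each step, because we only ever invoke $a_{q^{j}}$ for $j \le s$ and $q^{j} \le q^{s}\le B$. Once this is phrased precisely the rest is the routine multiplicativity argument, exactly as in the cited adaptation of Sturm's method in \cite{Kiming_mod_powers} and \cite{Sturm}, so I would simply remark that the corollary follows from Theorem \ref{theorem:generalized_sturm_theorem} together with the multiplicativity and the prime-power recurrence for Hecke eigenvalues of eigenforms of level $N$.
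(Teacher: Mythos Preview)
Your proof is correct and is exactly the paper's one-line argument spelled out in detail: the paper writes only ``This follows immediately from Theorem~\ref{theorem:generalized_sturm_theorem} since $f$ and $E$ are normalized eigenforms,'' and your multiplicativity-plus-Hecke-recursion bookkeeping is precisely what that phrase encodes. One minor quibble: your remark that ``$a_0$ is handled by Theorem~\ref{theorem:generalized_sturm_theorem} itself'' is phrased backwards---that theorem takes the $n=0$ congruence as part of its \emph{hypothesis} rather than producing it---but the paper's proof is equally silent on this point.
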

\begin{proof}
	This follows immediately from Theorem \ref{theorem:generalized_sturm_theorem} since $f$ and $E$ are normalized eigenforms.
\end{proof}

\begin{lemma}\label{lem:bound_primes}
Let $N$ be a square-free integer which is a product of prime numbers $p_{1},\ldots, p_{t}$ and $k\geq 2$ be an integer. Let $\{\epsilon_{i}\}_{i=1,\ldots,t}$ be a collection of symbols $\epsilon_{i}\in\{+,-\}$. Let $f\in\mathcal{S}_{k}(N)^{\mathrm{new}}$ be a newform and $E\in\mathcal{E}_{k}(N)$ an eigenform $E=[p_{1}]^{\epsilon_{1}}\circ\ldots\circ[p_{t}]^{\epsilon_{t}}E_{k}\in\mathcal{E}_{k}(N)$. Suppose that there exists a prime ideal $\lambda$ in $\mathcal{O}_{f}$ and a positive integer $r$ for which the congruence $\modulo{a_{n}(f)}{a_{n}(E)}{\lambda^{r}}$ holds for all integers $n$. Let $\ell$ denote the characteristic of the field $\mathcal{O}_{f}/\lambda$. One of the following conditions holds
\begin{itemize}
	\item[(1)] If $k\geq 2$ and $\epsilon_{1}=\ldots=\epsilon_{t}=+$, then $\ell\mid -\frac{B_{k}}{2k}\prod_{i}(1-p_{i})$.
	\item[(2)]  If $k=2$ and $\epsilon_{i}=-$ for some $i$, then $\ell\mid \textrm{GCD}(\{1-p_j^2:\epsilon_{j}=-\})$
	\item[(3)]  If $k>2$ and $\epsilon_{i}=-$ for some $i$, then 
	$\ell\mid \textrm{GCD}(\{1-p_j^k:\epsilon_{j}=-\}\cup \{1-p_j^{k-2}:\epsilon_{j}=+\})$
\end{itemize}
\end{lemma}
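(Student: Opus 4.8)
I would first observe that case~(1) is immediate from Theorem~\ref{corollary:bound_for_a_0_nonzero}: that theorem gives $r\le \mathrm{ord}_\lambda(\ell)\cdot v_\ell\!\left(-\frac{B_k}{2k}\prod_{i=1}^{t}(1-p_i)\right)$, so since $r\ge 1$ and $\mathrm{ord}_\lambda(\ell)\ge 1$ the integer $v_\ell\!\left(-\frac{B_k}{2k}\prod_i(1-p_i)\right)$ is at least $1/\mathrm{ord}_\lambda(\ell)>0$, hence $\ge 1$, which is exactly $\ell\mid -\frac{B_k}{2k}\prod_i(1-p_i)$. For the other two cases the plan is to rerun the first part of the proof of Corollary~\ref{corollary:bound_when_a_0_is_0}, keep only its mod-$\lambda$ content, and translate divisibility by $\lambda$ in $\mathcal{O}_f$ into divisibility by $\ell$ in $\mathbb{Z}$ through $\lambda\cap\mathbb{Z}=\ell\mathbb{Z}$.

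For this, for each prime $p_i\mid N$ I would invoke Lemma~\ref{lemma:Atkin_Lehner_special_coeff}, giving $a_{p_i}(f)=-\lambda_{p_i}p_i^{k/2-1}$ with $\lambda_{p_i}\in\{\pm 1\}$, together with the eigenvalue computations from Lemma~\ref{lemma:eigenforms_Eisenstein} (and its weight-$2$ analogue Lemma~\ref{lemma:basis_for_weight_2}), which via $a_{p_i}(E)=a_1(U_{p_i}E)$ and $a_1(E)=1$ give $a_{p_i}(E)=1$ when $\epsilon_i=+$ and $a_{p_i}(E)=p_i^{k-1}$ when $\epsilon_i=-$. Squaring the congruence $a_{p_i}(f)\equiv a_{p_i}(E)\pmod{\lambda^{r}}$ cancels the Atkin-Lehner sign and yields $p_i^{k-2}\equiv 1\pmod{\lambda^{r}}$ when $\epsilon_i=+$ (hence $\ell\mid 1-p_i^{k-2}$) and $p_i^{k-2}(1-p_i^{k})\equiv 0\pmod{\lambda^{r}}$ when $\epsilon_i=-$ (hence, reducing mod $\lambda$, either $\ell=p_i$ or $\ell\mid 1-p_i^{k}$). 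When $k=2$ the factor $p_i^{k-2}$ is $1$, so the $\epsilon_i=-$ case directly gives $\ell\mid 1-p_i^{2}$ and the $\epsilon_i=+$ clauses are vacuous, which is~(2); when $k>2$, and granting $\ell\nmid N$, one gets $\ell\mid 1-p_i^{k-2}$ for every $\epsilon_i=+$ and $\ell\mid 1-p_i^{k}$ for every $\epsilon_i=-$, and since dividing the $\mathrm{GCD}$ of a set of integers is the same as dividing each of them, this is~(3).

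The step I expect to be the main obstacle is justifying the proviso $\ell\nmid N$ in case~(3) (and, in case~(1), checking that nothing breaks when $\ell$ divides the denominator of $B_k/2k$ --- there one uses that the mere existence of the constant-term congruence forces $\mathrm{ord}_\lambda(a_0(E))\ge r\ge1$, which already absorbs that denominator). For a prime $p_i\mid N$ with $\epsilon_i=+$ this is routine: if $\ell=p_i$ and $k>2$ then $p_i^{k-2}\equiv 1\pmod\lambda$ becomes $0\equiv 1$, and if $k=2$ the only surviving possibility, $\lambda_{p_i}=1$ and $\ell=2$, leaves the stated conclusion intact. The delicate case is a prime $p_j\mid N$ with $\epsilon_j=-$ and $k>2$: there $\mathrm{ord}_\lambda\big(a_{p_j}(f)\big)=(k/2-1)e<(k-1)e=\mathrm{ord}_\lambda\big(a_{p_j}(E)\big)$, where $e=\mathrm{ord}_\lambda(\ell)$, so the $p_j$-coefficient congruence only forces $r\le(k/2-1)e$ and does not by itself exclude $\ell=p_j$; ruling that out --- equivalently, confirming that a cuspidal newform cannot be congruent to such an Eisenstein eigenform at a residue characteristic dividing the level --- is where the real work would go, and in the situations where this lemma is used it is either verified directly or avoided by assuming $\ell\nmid N$ throughout.
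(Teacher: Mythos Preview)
Your approach is exactly the paper's: the paper's entire proof is the sentence ``The lemma follows from Theorem~\ref{corollary:bound_for_a_0_nonzero} and Corollary~\ref{corollary:bound_when_a_0_is_0}.'' You have simply unpacked those two citations, and your derivation of cases~(1) and~(2) is correct and complete.

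Your caution about case~(3) is well placed, and in fact you are being more careful than the paper. Corollary~\ref{corollary:bound_when_a_0_is_0} explicitly assumes $p_i\notin\lambda$ for every $i$ with $\epsilon_i=-$, whereas the present lemma carries no such hypothesis; the paper's one-line proof does not explain how that assumption is discharged. As you observe, for $\epsilon_i=+$ the congruence $p_i^{k-2}\equiv 1\pmod\lambda$ itself forces $\ell\neq p_i$ when $k>2$, but for $\epsilon_j=-$ with $k>2$ the relation $p_j^{k-2}(1-p_j^{k})\equiv 0\pmod{\lambda^r}$ does not exclude $\ell=p_j$. So the gap you flag is genuine and is present in the paper's proof as well; in practice the lemma is only used to generate a finite list of candidate primes $\ell$ for the algorithm, and the primes dividing $N$ are handled separately (cf.\ Table~\ref{table:ell_divides_N_congr}), so the omission is harmless for the paper's purposes even if the statement as written is not fully justified.
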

\begin{proof}
The lemma follows from the Theorem \ref{corollary:bound_for_a_0_nonzero} and Corollary \ref{corollary:bound_when_a_0_is_0}.
\end{proof}

\subsection{Algorithm}\label{sec:algorithm}
\textit{Description of the algorithm:} For a fixed integer $k\geq 2$, a square-free integer $N$, a prime number $\ell$ and a fixed eigenform $E\in\mathcal{E}_{k}(N)$ the algorithm checks for which newforms $f\in\mathcal{S}_{k}(N)$ there is a congruence between $f$ and $E$ modulo $\lambda^{r}$ where the characteristic of the ideal $\lambda$ is $\ell$ and $r>0$ is the maximal possible.

\medskip\noindent
\textbf{Input:} an even number $k\geq 2$, a square-free integer $N>1$,  a prime number $\ell$ and an eigenform $E\in\mathcal{E}_{k}(N)$

\medskip\noindent
\textbf{Steps of the algorithm:}
\begin{itemize}
\item[(1)] Check whether $a_{0}(E)$ is $0$. If yes, then proceed to Step 2. If no, then check if $v_{\ell}(a_{0}(E))>0$. If yes, then go to Step 2. If no, then terminate the algorithm.
\item[(2)] Compute subsets $C_{i}$ of newforms in $\mathcal{S}_{k}(N)$ such that each two element in $C_{i}$ are Galois conjugate
\item[(3)] For each set $C_{i}$ pick one representative and create a set $F_{N,k}$ of those representatives for all $i$.
\item[(4)] Compute the Sturm bound $B=(k/12)[\mathrm{SL}_{2}(\mathbb{Z}):\Gamma_{0}(N)]$.
\item[(5)] For each form $f\in F_{N,k}$ compute the coefficient field $K_{f}$.
\item[(6)] For each $f\in F_{N,k}$ create a set $S_{\ell,f}$ that is made of prime ideals that appear in the factorization of $\ell\mathcal{O}_{f}$.
\item[(7)] For each element $f\in F_{N,k}$ and $\lambda\in S_{\ell,f}$ compute the number
\[r_{\lambda}=\min\left\{ord_{\lambda}\left(a_{q}(f)-a_{q}(E)  \right)\mid q\leq B\right\}.\]
The minimum runs over prime numbers $q$. If $r_{\lambda}>0$, then return a triple $(f,\lambda,r_{\lambda})$. 
\end{itemize}

\medskip\noindent
\textbf{Output:} Set of triples $(f,\lambda,r)$ such that
\[\modulo{a_{n}(f)}{a_{n}(E)}{\lambda^{r}}\]
for all $n\geq 0$ and if for some $s>0$ we have
\[\modulo{a_{n}(f)}{a_{n}(E)}{\lambda^{s}}\]
for all $n\geq 0$, then $s\leq r$. Remark: it might happen that the list will be empty.

\medskip\noindent
\textbf{Validity of the algorithm:} In Step 1 we check if the congruence (\ref{equation:congruence_for_E_f}) is possible. Step 2 amounts to a finite number of computational steps for a fixed level $N$ and weight $k$ by using, for instance, modular symbols. Moreover we can represent each newform by a finite number of bits (e.g. by using the modular symbols representation). The number $r_{\lambda}$ in Step 6 satisfies the output condition because of Corollary \ref{corollary:Sturm_on_primes}. Since $N$ is square-free the constant $B$ is equal to the constant from Corollary \ref{corollary:Sturm_on_primes}.

\section{Numerical data}\label{sec:Numerical_data}
In this section we present the computational data that was gathered while running Algorithm \ref{sec:algorithm} under the restrictions of Lemma \ref{lem:bound_primes}. We performed a check that includes weights $k$ between $2$ and $24$ and square-free levels $N$ up to $4559$. More precise bounds are presented in Table \ref{table:range}. Our main computational resource was the cluster Gauss at the University of Luxembourg maintained by Prof. Gabor Wiese. This computer has 20 CPU units of type Inter(R) Xeon(R) CPU E7-4850 @ 2.00 GHz and approximately 200 GB of RAM memory. We used the computer algebra package MAGMA \cite{MAGMA} and the set of instructions MONTES \cite{Montes} which greatly enhances the efficiency of computations performed on number fields with large discriminants. However, it took about 4 weeks of the computational time under full CPU load of the Gauss cluster (around 13440 CPU hours) to finish the calculations.

{\renewcommand{\arraystretch}{1.3}
\begin{table}[ht]
\begin{center}
\setlength{\tabcolsep}{6pt}
\begin{tabular}{c|c|c|c|c|c|c|c|c|c|c|c|c}
$k$     & 2 & 4 & 6 & 8 & 10 & 12 & 14 & 16 & 18 & 20 & 22 & 24\\
\hline
$N\leq$ & 4559 & 922 & 302 & 202 & 193 & 102 & 94 & 94 & 94 & 94 & 94 & 94
\end{tabular}
\end{center}
\caption{Weight $k$ and corresponding maximal level $N$.}\label{table:range}
\end{table}}

\subsection{Description of data in the tables}\label{sec:descr_of_data}
Let $f_{i}$ be a newform in $\mathcal{S}_{k}(N)^{\mathrm{new}}$ where $k\geq 2$ and $N$ is square-free. The index $i$ is associated to the particular form by the algorithm presented in \cite[Chapter IV]{Cremona_Modular}, described in details in MAGMA manual \footnote{\url{http://magma.maths.usyd.edu.au/magma/handbook/text/1545}}.The number $d$ will denote the degree of the extension $K_{f}$ over $\mathbb{Q}$. Let $\lambda\subset\mathcal{O}_{f_{i}}$ be a prime ideal with residue characteristic $\ell$. Let $e$ denote the ramification degree $\mathrm{ord}_{\lambda}(\ell)$ and $f$ the degree of the residue field extension $[\mathcal{O}_{f_{i}}/\lambda:\mathbb{F}_{\ell}]$. We consider the Eisenstein eigenform $E_{N^{-},N^{+}}\in\mathcal{E}_{k}(N)$ with $N=N^{-}N^{+}$ such that
\begin{equation}\label{equation:explicit_congruence_general}
\modulo{a_{n}(f_{i})}{a_{n}(E_{N^{-},N^{+}})}{\lambda^{r}}
\end{equation}
for all $n\geq 0$. Assume that the positive integer $r$ is maximal, i.e. there is no congruence of type \eqref{equation:explicit_congruence_general} with ideal exponent $r'$ greater than $r$. The number $m$ will denote the maximum over $s$ which satisfy simultaneous congruences
\[\modulo{a_{p_{j}}(f_{i})}{a_{p_{j}}(E_{N^{-},N^{+}})}{\ell^{s}},\quad 1\leq j\leq t, \]
\[\modulo{a_{0}(f_{i})}{a_{0}(E_{N^{-},N^{+}})}{\ell^{s}}.\]
Observe that $m$ depends on the choice of $N$, $N^{+}$, $N^{-}$, $f_{i}$ and $\lambda$. An upper bound for the exponent $r$ is the product $m\cdot e$. In general the bound $m\cdot e$ might be smaller than the upper bound computed in Theorem \ref{corollary:bound_for_a_0_nonzero} and Corollary \ref{corollary:bound_when_a_0_is_0}. We also use specific labels to indicate different prime ideals $\lambda$ that occur in the factorization of $\ell\mathcal{O}_{f_{i}}$. These labels are described in the MONTES package documentation\footnote{\url{http://www-ma4.upc.edu/~guardia/MontesAlgorithm.html}}. Hence, in the column labelled by $\lambda$ we use the notation $\lambda_{j}$ to denote a specific prime ideal with respect to the MONTES labelling. Similarly in the column ''form'' we let $f_{i}$ denote a specific newform that will appear.

\begin{example}
In Table \ref{table:data_example} we describe an example of a typical row of data in our congruence database. We read from it that a newform $f_{1}\in\mathcal{S}_{2}(2651)^{\mathrm{new}}$ is congruent to the Eisenstein series $E_{1,2651}$ modulo a power $\lambda_{1}^{2}$, where the ideal $\lambda_{1}$ is of residue characteristic $5$ and its ramification degree $e$ above $\ell=5$ equals $2$. Field degree $[K_{f_{1}}:\mathbb{Q}]$ is $35$ and $\mathcal{O}_{f_{1}}/\lambda_{1}=\mathbb{F}_{5}$. Theoretical upper bound for $r$ is $m\cdot e=4$ but our congruence appears only with the maximal exponent $r=2$.
{\def\arraystretch{1.1}
\begin{table}[ht]
\setlength{\tabcolsep}{7pt}
\begin{center}
\csvreader[tabular=c|c|c|c|c|c|c|c|c|c|c|c,
    table head= $N$ & $N^-$ & $N^+$ & $k$ & $\ell$ & $m$ &  form & $\lambda$ & $r$ & $e$ & $f$ & $d$ \\\hline,
    late after line=\\]%
{przyklad.csv}{n=\nnumber,nm=\nmnumber,np=\npnumber,fnumber=\fnumber, idnumber=\idnumber, r=\rnumber, k=\knumber, l=\lnumber, e=\enumber, f=\fnumber, d=\dnumber,m=\mnumber }%
{ \nnumber & \nmnumber & \npnumber & \knumber & \lnumber & \mnumber & $f_{\fnumber}$ & $\lambda_{\idnumber}$ & \rnumber & \enumber & \fnumber & \dnumber } %
\end{center}
\caption{Typical row of data}\label{table:data_example}
\end{table}}

\end{example}
 
\begin{example}
In Table \ref{table:high_exponents} we present for each pair $(r,\ell)$ one congruence for which $r$ is maximal in the whole range described in Table \ref{table:range}. In case there were more than one suitable pair $(r,\ell)$ we chose a specific pair and some $k$.
Moreover, we sort the data by descending value of $r$. 

{\def\arraystretch{1.1}
\begin{table}[htb]
\begin{center}
	\setlength{\tabcolsep}{7pt}
\csvreader[tabular=c|c|c|c|c|c|c|c|c|c|c|c|c,
    table head= & $N$ & $N^-$ & $N^+$ & $k$ & $\ell$ & $m$ &  form & $\lambda$ & $r$ & $e$ & $f$ & $d$ \\\hline,
    late after line=\\]%
{high_congruences.csv}{n=\nnumber,nm=\nmnumber,np=\npnumber,fnumber=\fnumber, idnumber=\idnumber, r=\rnumber, k=\knumber, l=\lnumber, e=\enumber, f=\fnumber, d=\dnumber,m=\mnumber }%
{\thecsvrow & \nnumber & \nmnumber & \npnumber & \knumber & \lnumber & \mnumber & $f_{\fnumber}$ & $\lambda_{\idnumber}$ & \rnumber & \enumber & \fnumber & \dnumber } %
\end{center}
\caption{Congruences that satisfy $r>2$ and $m>1$, one for each pair $(r,\ell)$}\label{table:high_exponents}
\end{table}}
\end{example}

\begin{example}
In Table \ref{table:ramified_congruences_1} we describe some examples of congruences that satisfy the non-trivial bound $r\leq e$ with $m>1$. We refer to Corollary \ref{corollary:ramification_bounds} for a precise statement of our observation.

{\def\arraystretch{1.1}
\begin{table}[htb]
\begin{center}
	\setlength{\tabcolsep}{7pt}
\csvreader[tabular=c|c|c|c|c|c|c|c|c|c|c|c|c,
    table head= & $N$ & $N^-$ & $N^+$ & $k$ & $\ell$ & $m$ &  form & $\lambda$ & $r$ & $e$ & $f$ & $d$ \\\hline,
    late after line=\\]%
{duze_e_duze_m_lower.csv}{n=\nnumber,nm=\nmnumber,np=\npnumber,fnumber=\fnumber, idnumber=\idnumber, r=\rnumber, k=\knumber, l=\lnumber, e=\enumber, f=\fnumber, d=\dnumber,m=\mnumber }%
{\thecsvrow & \nnumber & \nmnumber & \npnumber & \knumber & \lnumber & \mnumber & $f_{\fnumber}$ & $\lambda_{\idnumber}$ & \rnumber & \enumber & \fnumber & \dnumber } %
\end{center}
\caption{Exemplary congruences that satisfy conditions: $e>1$, $m>1$, $\ell>3$}\label{table:ramified_congruences_1}
\end{table}}
\end{example}

\section{Summary of computational results}\label{sec:Computational_results}
In this paragraph we summarize the large scale numerical computations that established the existence of congruences for square-free levels $N$ and weights $k$ in the range predicted by Table \ref{table:range}. We will say that \textit{there exists a congruence that satisfies} a condition $\mathcal{W}=\mathcal{W}(r,d,e,f,N^{-},N^{+},\ell,m)$ if we can find a weight $k$ and level $N$ such that there exists a newform $f\in\mathcal{S}_{k}(N)^{\mathrm{new}}$ and an Eisenstein eigenform $E\in\mathcal{E}_{k}(N)$ that satisfy (\ref{equation:explicit_congruence_general}) for an ideal $\lambda\in\mathcal{O}_{f}$ and a positive integer $r$ and such that the values of $r,d,e,f,N^{-},N^{+},\ell$ and $m$ associated with this congruence satisfy the condition $\mathcal{W}$.

\begin{corollary}
Let $N$ be a square-free number depending on the weight as described in Table \ref{table:range}. In Table \ref{table:summary_1} we present the number of different congruences of type (\ref{equation:explicit_congruence_general}) that can be found in the presented range.
In the column denoted by $r\geq 0$ we count the number of pairs $(f,\lambda)$ returned by Algorithm \ref{sec:algorithm}. In the column ''$r>0$'' we count the number of congruences, in the column ''$m\cdot e=r>0$'' we count the number of congruences with maximal exponent $r=m\cdot e$ and the last column has a similar meaning.
\end{corollary}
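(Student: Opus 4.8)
The statement is essentially a bookkeeping assertion about the output of Algorithm \ref{sec:algorithm}, so the plan is to reduce the count to a finite enumeration and then appeal to the correctness of that algorithm together with the finiteness inputs already established. First I would fix a weight $k$ and a square-free level $N$ inside the bounds of Table \ref{table:range}, and list the Eisenstein eigenforms $E = E_{N^{-},N^{+}}^{(k)} \in \mathcal{E}_{k}(N)$ using the explicit bases from Theorem \ref{theorem:basis_weight_gt_2} and Theorem \ref{theorem:basis_for_weight_2}; there are $2^{t}$ of them (respectively $2^{t}-1$ when $k=2$), where $t$ is the number of prime factors of $N$.

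Next, for each such $E$ I would invoke Lemma \ref{lem:bound_primes}: the residue characteristic $\ell$ of any prime ideal $\lambda$ carrying a congruence of type \eqref{equation:explicit_congruence_general} must divide an explicit integer (either $-B_{k}/(2k)\prod_{i}(1-p_{i})$ when all $\epsilon_{i}=+$, or one of the displayed GCDs when some $\epsilon_{i}=-$), so only finitely many $\ell$ need to be tried. For each admissible pair $(E,\ell)$ I would run Algorithm \ref{sec:algorithm}. Its validity discussion shows that it terminates and returns, for each newform $f\in\mathcal{S}_{k}(N)^{\mathrm{new}}$ and each $\lambda\mid\ell$ in $\mathcal{O}_{f}$, the maximal exponent $r_{\lambda}$; correctness here rests on the Sturm-type bound of Theorem \ref{theorem:generalized_sturm_theorem} and Corollary \ref{corollary:Sturm_on_primes}, which guarantees that comparing the coefficients $a_{q}(f)$ and $a_{q}(E)$ only at primes $q\le B=(k/12)[\mathrm{SL}_{2}(\mathbb{Z}):\Gamma_{0}(N)]$ already decides the full congruence.

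The four columns of Table \ref{table:summary_1} are then obtained by post-processing the output. The column $r\ge 0$ counts all pairs $(f,\lambda)$ inspected in the final step of the algorithm; the column $r>0$ retains those for which a genuine congruence was found; and the columns comparing $r$ with $m\cdot e$ use the quantity $m$ (the maximal $s$ for which the congruences on $a_{p_{j}}$ and $a_{0}$ hold modulo $\ell^{s}$, as in Section \ref{sec:descr_of_data}) together with the ramification index $e=\mathrm{ord}_{\lambda}(\ell)$, both of which are computed alongside $r_{\lambda}$. No additional mathematical input is needed: every ingredient is either one of the basis/Sturm statements already proved or a definition from Section \ref{sec:descr_of_data}.

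The obstacle is computational rather than conceptual. For the heavier cases — notably weight $2$ with $N$ up to $4559$ — the coefficient field $K_{f}$ can have large degree and a discriminant divisible by high powers of the relevant primes, so the factorization of $\ell\mathcal{O}_{f}$ and the evaluation of $\mathrm{ord}_{\lambda}$ cannot be carried out by naive number-field routines; one needs the Montes algorithm as implemented in the MONTES package. Combined with the sheer number of quadruples $(k,N,E,\ell)$ to be processed, this is what forces the parallel cluster computation described in Section \ref{sec:Numerical_data}, and once that is completed the tabulated counts follow directly.
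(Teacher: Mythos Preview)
Your proposal is correct and matches the paper's approach: the corollary is stated in the paper without proof, being a summary of the output of Algorithm \ref{sec:algorithm} run over the range in Table \ref{table:range}, and your write-up accurately reconstructs the ingredients (the Eisenstein bases, Lemma \ref{lem:bound_primes} to bound the admissible $\ell$, the Sturm bound via Theorem \ref{theorem:generalized_sturm_theorem} and Corollary \ref{corollary:Sturm_on_primes}, and the MONTES-assisted computation) that justify the tabulated counts. There is nothing to add on the mathematical side; the only ``proof'' in either case is the completed computation.
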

{\def\arraystretch{1.1}
\begin{table}[htb]
%\tiny
\begin{center}
		\setlength{\tabcolsep}{7pt}
\csvreader[tabular=c|c|c|c|c,
    table head= $k$ & $r\geq 0$  & $r>0$  & $\protect{m\cdot e =r > 0}$ & $\protect{m\cdot e >r >0}$ \\\hline,
    late after line=\\]%
{summary_cong1.csv}{k=\knumber,rgeq0=\rgeq,rgt0=\rgt,meeqr=\meeqr,megtr=\megtr}%
{ \knumber & \rgeq & \rgt & \meeqr & \megtr} %
\end{center}
\caption{Number of congruences of type (\ref{equation:explicit_congruence_general}) for fixed values of  $k$.}\label{table:summary_1}
\end{table}}

\begin{corollary}\label{corollary:ramification_bounds}
For $(N,k)$ from range in Table \ref{table:range} there exists 96 congruences that satisfy $e>1$, $m>1$ and $\ell>3$. Except for the cases described in Table \ref{table:exceptional_ramification} we have the bound $r\leq e$.
\end{corollary}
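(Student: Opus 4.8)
The statement is essentially a report on the outcome of the large-scale computation described in Section~\ref{sec:Numerical_data}, so the proof consists of organizing and verifying that computation. The plan is to run Algorithm~\ref{sec:algorithm} exhaustively over the parameter range of Table~\ref{table:range} and then post-process the output.

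First I would, for each even weight $k$ and each square-free level $N$ below the bound attached to $k$ in Table~\ref{table:range}, list the Eisenstein eigenforms $E=E_{N^{-},N^{+}}$ using the explicit basis of Theorem~\ref{theorem:basis_weight_gt_2} (for $k>2$) and Theorem~\ref{theorem:basis_for_weight_2} (for $k=2$). For each such $E$, Lemma~\ref{lem:bound_primes} produces a finite list of candidate primes $\ell$ that can possibly support a congruence $\modulo{a_{n}(f)}{a_{n}(E)}{\lambda^{r}}$; outside this list no congruence exists, so only these tuples $(k,N,E,\ell)$ need be fed into the algorithm. Running Algorithm~\ref{sec:algorithm} on each such tuple returns the complete set of triples $(f,\lambda,r)$ with $r>0$ maximal, together with the data needed to read off $e=\mathrm{ord}_{\lambda}(\ell)$, the residue degree, the field degree $d$, and the integer $m$ defined in Section~\ref{sec:descr_of_data}.

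Next I would filter this master list to the congruences satisfying $e>1$, $m>1$ and $\ell>3$, and simply tabulate them: one checks that exactly $96$ such congruences occur, and that every one of them, apart from the finitely many exceptions collected in Table~\ref{table:exceptional_ramification}, satisfies $r\leq e$. Correctness of the enumeration rests on Corollary~\ref{corollary:Sturm_on_primes}: since $N$ is square-free, the Sturm bound $B=k\prod_{i}(p_{i}+1)/12$ used in Step~(7) of the algorithm is exactly the bound of Theorem~\ref{theorem:generalized_sturm_theorem}, so comparing Fourier coefficients at the primes $q\leq B$ determines the maximal exponent $r$ unambiguously, and no congruence is missed because of the restriction in Step~(1) combined with Lemma~\ref{lem:bound_primes}.

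The main obstacle is purely computational rather than conceptual: for several levels the coefficient field $K_{f}$ has large degree $d$ and large discriminant, which makes factoring $\ell\mathcal{O}_{f}$ and evaluating $\mathrm{ord}_{\lambda}$ the bottleneck of Steps~(6)--(7). This is handled by using the MONTES package \cite{Montes} for the ideal factorizations, and by distributing the search over the cluster as described in Section~\ref{sec:Numerical_data}, at a total cost of roughly $13440$ CPU hours. Once the computation is complete, the corollary is simply a summary of the resulting table.
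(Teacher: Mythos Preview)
Your proposal is correct and matches the paper's approach: the corollary is stated in the paper without a formal proof environment, since it is a direct summary of the computational output produced by running Algorithm~\ref{sec:algorithm} over the range of Table~\ref{table:range} and filtering for the conditions $e>1$, $m>1$, $\ell>3$. Your description of the pipeline (enumerate the Eisenstein eigenforms, restrict $\ell$ via Lemma~\ref{lem:bound_primes}, certify each congruence via the Sturm bound of Corollary~\ref{corollary:Sturm_on_primes}, and tabulate) is exactly what underlies the corollary.
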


{\def\arraystretch{1.1}
\begin{table}[htb]
%\tiny
\begin{center}
\setlength{\tabcolsep}{7pt}
\csvreader[tabular=c|c|c|c|c|c|c|c|c|c|c|c|c,
    table head= & $N$ & $N^-$ & $N^+$ & $k$ & $\ell$ & $m$ &  form & $\lambda$ & $r$ & $e$ & $f$ & $d$ \\\hline,
    late after line=\\]%
{exceptional_r_gt_r_all_range.csv}{n=\nnumber,nm=\nmnumber,np=\npnumber,fnumber=\fnumber, idnumber=\idnumber, r=\rnumber, k=\knumber, l=\lnumber, e=\enumber, f=\fnumber, d=\dnumber,m=\mnumber }%
{\thecsvrow & \nnumber & \nmnumber & \npnumber & \knumber & \lnumber & \mnumber & $f_{\fnumber}$ & $\lambda_{\idnumber}$ & \rnumber & \enumber & \fnumber & \dnumber } %
\end{center}
\caption{Congruences that satisfy $e>1$, $m>1$, $\ell>3$ and $r>e$.}\label{table:exceptional_ramification}
\end{table}}

\begin{remark}
Corollary \ref{corollary:ramification_bounds} extends similar computations performed in \cite{Naskrecki_Springer} for prime levels $N$ and weight $k=2$. It was verified there that for primes $N\leq 13009$ the property $r\leq e$ holds for all $\ell>3$ and $e>1$. It is an open question if there are infinitely many such congruences for all possible ranges of $N$ and $k$.
\end{remark}

\begin{corollary}\label{corollary:large_degree_field}
Let $k=2$. For $N\leq 4559$ square-free and for any $d\leq 222$ we found congruences (\ref{equation:explicit_congruence_general}) if $d\notin D$ where
\begin{equation*}
\begin{split}
D=\{169,175,178,192,197,204,207,208,211,\\
214,215,216,217,218,219,220,221\}.
\end{split}
\end{equation*}
\end{corollary}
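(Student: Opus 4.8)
The statement is a record of an exhaustive machine computation, so the proof consists in specifying what is computed and arguing that the procedure is both sound and complete for the stated range. The plan is to run Algorithm~\ref{sec:algorithm} with $k=2$ for every square-free $N\leq 4559$. Concretely: enumerate the Galois-conjugacy classes of newforms in $\mathcal{S}_{2}(N)^{\mathrm{new}}$ using modular symbols (Step~(2)); for one representative $f$ of each class compute the coefficient field $K_{f}$ and its degree $d=[K_{f}:\mathbb{Q}]$ (Step~(5)); for every factorization $N=N^{-}N^{+}$ with $N^{+}>1$ form the Eisenstein eigenform $E_{N^{-},N^{+}}\in\mathcal{E}_{2}(N)$ of Theorem~\ref{theorem:basis_for_weight_2}; restrict the candidate primes $\ell$ using the divisibility conditions of Lemma~\ref{lem:bound_primes}; and for each prime $\lambda$ above such an $\ell$ in $\mathcal{O}_{f}$ let the algorithm return the maximal exponent $r_{\lambda}$ for which (\ref{equation:explicit_congruence_general}) holds. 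Soundness of the returned $r_{\lambda}$ is guaranteed by Corollary~\ref{corollary:Sturm_on_primes}: because $N$ is square-free, it suffices to test the congruence at primes $q$ up to the Sturm bound $k\prod_{i}(p_{i}+1)/12$.

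Having run this over the whole range, I would collect the set $\mathcal{D}\subseteq\{1,2,\dots,222\}$ of degrees $d$ for which at least one triple $(f,\lambda,r)$ with $r>0$ is produced, and verify by direct inspection that $\{1,\dots,222\}\setminus\mathcal{D}$ is exactly the set $D$ in the statement. Equivalently, for each individual $d\in\{1,\dots,222\}\setminus D$ the search exhibits an explicit witness: a square-free $N\leq 4559$, a newform $f\in\mathcal{S}_{2}(N)^{\mathrm{new}}$ with $[K_{f}:\mathbb{Q}]=d$, an Eisenstein eigenform $E\in\mathcal{E}_{2}(N)$, a prime $\ell$, a prime $\lambda\mid\ell\mathcal{O}_{f}$, and an exponent $r>0$ realizing (\ref{equation:explicit_congruence_general}); these are precisely the entries of the congruence database mentioned in the introduction.

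The only real obstacle is computational feasibility: the coefficient fields $K_{f}$ occurring for $N$ near $4559$ have degrees in the low hundreds and very large discriminants, so naively factoring $\ell\mathcal{O}_{f}$ and evaluating $\mathrm{ord}_{\lambda}$ — operations that presuppose a global integral basis — is out of reach. This is circumvented by invoking the MONTES package \cite{Montes}, whose implementation of the Montes algorithm computes the prime decomposition of $\ell$ and the associated valuations without computing an integral basis; together with the restriction of $\ell$ via Lemma~\ref{lem:bound_primes} and the Sturm cut-off this keeps the total cost within the roughly 13440 CPU hours reported above. I would emphasize that the exclusion set $D$ is an artefact of the finite cut-off $N\leq 4559$ and is not asserted to be final: it is plausible that every degree $d$ is eventually realized by some congruence of type (\ref{equation:explicit_congruence_general}).
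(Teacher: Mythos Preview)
Your proposal is correct and matches the paper's approach: the corollary is presented there without proof, as a direct summary of the exhaustive run of Algorithm~\ref{sec:algorithm} over the range in Table~\ref{table:range}, with soundness coming from Corollary~\ref{corollary:Sturm_on_primes}, the prime restriction from Lemma~\ref{lem:bound_primes}, and the number-field arithmetic handled via the MONTES package exactly as you describe.
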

\begin{remark}
In \cite{Dieulefait_Jimenez_Ribet} the authors study the existence of newforms $f$ with large degree coefficient field $K_{f}$. The computations from Corollary \ref{corollary:large_degree_field} and Table \ref{table:large_degree} suggest that we can both find newforms that have large degree of $K_{f}$ and that are congruent to an Eisenstein eigenform. In Figure \ref{figure:dependens_on_degree_growth} we show that the growth of $d$ as a function of least $N$ is roughly a linear function. The way we present data in Table \ref{table:large_degree} is as follows: we assume $N^{-}=1$, in the $i$-th row we present a congruence such that $d\geq 10i$ for the least possible $N$. All values of $N$ that we found are prime numbers.
\end{remark}

{\def\arraystretch{1.1}
\begin{table}[htb]
\begin{center}
\setlength{\tabcolsep}{7pt}
\csvreader[tabular=c|c|c|c|c|c|c|c|c|c|c|c|c,
    table head= & $N$ & $N^-$ & $N^+$ & $k$ & $\ell$ & $m$ &  form & $\lambda$ & $r$ & $e$ & $f$ & $d$ \\\hline,
    late after line=\\]%
{high_degrees.csv}{n=\nnumber,nm=\nmnumber,np=\npnumber,fnumber=\fnumber, idnumber=\idnumber, r=\rnumber, k=\knumber, l=\lnumber, e=\enumber, f=\fnumber, d=\dnumber,m=\mnumber }%
{\thecsvrow & \nnumber & \nmnumber & \npnumber & \knumber & \lnumber & \mnumber & $f_{\fnumber}$ & $\lambda_{\idnumber}$ & \rnumber & \enumber & \fnumber & \dnumber } %
\end{center}
\caption{Selected congruences sorted by the degree $d$.}\label{table:large_degree}
\end{table}}

\begin{figure}
\begin{center}
\includegraphics{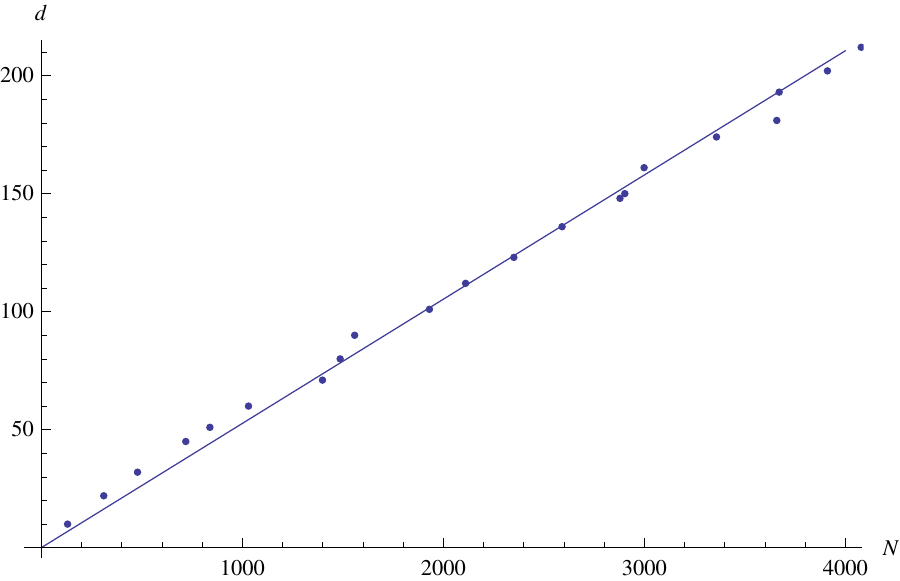}
\end{center}
\caption{Growth of degree $d$ as a function of least level $N$ for data from Table \ref{table:large_degree}.}\label{figure:dependens_on_degree_growth}
\end{figure}

\begin{corollary}
For $k=2$ and level $N$ less or equal to $4559$ there exist a congruence for any level except $N=13$ or $22$, for which the space $\mathcal{S}_{2}(N)^{\mathrm{new}}$ is zero.
\end{corollary}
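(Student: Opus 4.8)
The plan is to partition the square-free levels $N\le 4559$ into three groups — the levels carrying no newform, the prime levels, and the composite square-free levels — and to treat each with the machinery already in place. First I would isolate the levels for which $\mathcal{S}_{2}(N)^{\mathrm{new}}=0$, since there a congruence of type \eqref{equation:explicit_congruence_general} is vacuously impossible. Using the standard dimension formula for $\mathcal{S}_{2}(N)^{\mathrm{new}}$ (via the genus of $X_{0}(N)$ together with the old/new decomposition, cf.\ \cite{Diamond_Shurman}) one checks that among square-free $N\le 4559$ this occurs exactly for $N\in\{1,2,3,5,6,7,10,13,22\}$: for $N=13$ already $X_{0}(13)$ has genus $0$, while for $N=22$ one has $\dim\mathcal{S}_{2}(22)=2$ but both forms are old, coming from level $11$. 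Restricting, as the statement tacitly does, to the range $N\ge 11$ where cusp forms first appear, the only exceptions are $N=13$ and $N=22$.

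For the prime levels $N=p$ with $11\le p\le 4559$ and $p\neq 13$ I would invoke Mazur's theorem \cite{Mazur_modular}. Since $\mathcal{S}_{2}(p)=\mathcal{S}_{2}(p)^{\mathrm{new}}$, the Eisenstein ideal of $\mathbb{T}_{p}$ attached to $E=[p]^{+}E_{2}$ has residue ring $\mathbb{Z}/n\mathbb{Z}$ with $n$ the numerator of $(p-1)/12$, and it is proper precisely when $n>1$, which holds for every prime $p\ge 11$ except $p=13$ (the only primes with $p-1\mid 12$ being $2,3,5,7,13$). Choosing a prime $\ell\mid n$ yields a maximal ideal of $\mathbb{T}_{p}$ whose pullback to $\mathcal{O}_{f}$, for a suitable newform $f$, is a prime $\lambda$ with $a_{n}(f)\equiv a_{n}(E)\pmod{\lambda}$ for all $n\ge 1$; a short inspection of the numerators shows $\ell$ can be chosen so that the $n=0$ congruence $0\equiv a_{0}(E)=(p-1)/24$ holds as well (the only potential obstruction, $\ell=2$ with $v_{2}(p-1)=3$, never occurs for primes $p$ with $n$ a power of $2$). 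This settles the prime case for \emph{all} $p$, not merely $p\le 4559$; a number of composite square-free levels are likewise covered by the sufficiency criteria of Yoo \cite{Yoo_thesis,Yoo_multiplicity}.

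The remaining composite square-free levels $N\le 4559$ are handled by exhaustive computation. For each such $N$ one runs Algorithm \ref{sec:algorithm} with $k=2$ over every Eisenstein eigenform $E_{N^{-},N^{+}}\in\mathcal{E}_{2}(N)$ provided by Theorem \ref{theorem:basis_for_weight_2} and every prime $\ell$ permitted by Lemma \ref{lem:bound_primes} (a finite set for each pair $(N,E)$), comparing coefficients at primes $n$ below the Sturm bound $\prod_{i}(p_{i}+1)/6$; this suffices by Theorem \ref{theorem:generalized_sturm_theorem} and Corollary \ref{corollary:Sturm_on_primes}. The output — the congruence database underlying Section \ref{sec:Numerical_data} — returns in every case at least one triple $(f,\lambda,r)$ with $r\ge 1$. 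The main obstacle here is not conceptual but the sheer scale and the need for the search to be provably complete: one must enumerate the newforms of each of roughly a thousand levels via modular symbols, compute and factor $\ell$ in the sometimes high-degree coefficient fields $\mathcal{O}_{f}$ (handled with the MONTES package to control ramified primes), and be certain that Lemma \ref{lem:bound_primes} exhausts the candidate characteristics $\ell$, so that a negative result of the algorithm genuinely means ``no congruence of type \eqref{equation:explicit_congruence_general}''.
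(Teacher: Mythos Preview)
The paper offers no proof of this corollary at all: it sits in the section ``Summary of computational results'' and is presented purely as an observation extracted from the congruence database produced by Algorithm~\ref{sec:algorithm} over the range in Table~\ref{table:range}. In that sense your proposal already goes further than the paper, which simply reports the outcome of the exhaustive search.

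Your argument differs from the paper's (implicit) approach in that you split off the prime levels and treat them \emph{theoretically} via Mazur's Eisenstein ideal theorem rather than by computation. This is a genuine improvement: for every prime $p\geq 11$, $p\neq 13$, the numerator $n$ of $(p-1)/12$ exceeds $1$, and your case analysis showing that some $\ell\mid n$ also satisfies $v_{\ell}((p-1)/24)\geq 1$ (so that the $n=0$ coefficient is handled as well) is correct --- the only delicate case is $\ell=2$ with $v_{2}(p-1)=3$, and you rightly observe that this forces $p\in\{9,25\}$, neither prime. So for prime $N$ your argument is unconditional and does not rely on the database. The composite square-free levels you then hand back to Algorithm~\ref{sec:algorithm}, which is exactly what the paper does; invoking Yoo's criteria to pre-empt some of these is a nice touch but not needed for the claim.

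One point worth flagging: your list $\{1,2,3,5,6,7,10,13,22\}$ of square-free $N$ with $\mathcal{S}_{2}(N)^{\mathrm{new}}=0$ is correct, and you are right that the corollary as stated is slightly loose in naming only $N=13$ and $N=22$. The paper presumably has in mind levels $N\geq 11$ (or simply did not run the algorithm on the trivial small cases); your charitable reading is the intended one. This is a defect of the paper's statement, not of your proof.
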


\begin{corollary}
For $k=2$ and $N^{-}>1$ there exists $54077$ congruences for levels $N\leq 4559$ and $8860$ congruences for $N^{-}=1$ and levels $N\leq 4559$.
\end{corollary}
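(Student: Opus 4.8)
The assertion is a direct report on the output of the search procedure of Section~\ref{sec:algorithm} specialized to $k=2$, so the plan is simply to explain how running that procedure over the stated range yields exactly these two counts. First I would fix $k=2$ and, for each square-free $N$ with $1<N\leq 4559$, enumerate the Eisenstein eigenforms $E_{N^-,N^+}\in\mathcal{E}_2(N)$ using the explicit basis of Theorem~\ref{theorem:basis_for_weight_2}: there are $2^t-1$ of them, indexed by the factorizations $N=N^-N^+$ with $N^+>1$, and Lemma~\ref{lemma:basis_for_weight_2} supplies their Hecke eigenvalues and hence their $q$-expansions up to any bound. For each fixed pair $(N,E_{N^-,N^+})$ I would then apply Step~1 of the algorithm: when $N^->1$ one has $a_0(E)=0$, and when $N^-=1$ one keeps only those residue characteristics $\ell$ with $v_\ell(a_0(E))>0$. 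By Lemma~\ref{lem:bound_primes} this restricts $\ell$ to the (short, finite) list of primes dividing $-\tfrac{B_2}{4}\prod_i(1-p_i)$ when $N^-=1$, and to the primes dividing $\mathrm{GCD}(\{1-p_j^2:\epsilon_j=-\})$ when $N^->1$.

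Next, for each admissible $\ell$ and each Galois-conjugacy class of newforms $f\in\mathcal{S}_2(N)^{\mathrm{new}}$ (computed via modular symbols), I would factor $\ell\mathcal{O}_f$ and, for every prime $\lambda$ above $\ell$, compute $r_\lambda=\min\{\mathrm{ord}_\lambda(a_q(f)-a_q(E)):q\leq B\text{ prime}\}$, where $B=(1/6)[\mathrm{SL}_2(\mathbb{Z}):\Gamma_0(N)]=\prod_i(p_i+1)/6$ is the Sturm bound of Theorem~\ref{theorem:generalized_sturm_theorem}. By Corollary~\ref{corollary:Sturm_on_primes}, a value $r_\lambda>0$ certifies the congruence \eqref{equation:explicit_congruence_general} for all $n\geq 0$, and $r_\lambda$ is the maximal such exponent. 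Collecting all triples $(f,\lambda,r_\lambda)$ with $r_\lambda>0$ over the entire range of Table~\ref{table:range} for $k=2$, and separating them according to whether $N^-=1$ or $N^->1$, produces the two totals $8860$ and $54077$.

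The main obstacle here is not conceptual but computational reliability. First, one must be sure the enumeration of newforms and of Eisenstein eigenforms is complete, so that the restricted prime lists of Lemma~\ref{lem:bound_primes} genuinely capture every congruence and no contribution is omitted or double-counted; the exactness of the totals depends on this. Second, and more seriously, one must carry out the factorization of $\ell\mathcal{O}_f$ and the $\lambda$-adic valuations of $a_q(f)-a_q(E)$ accurately for newforms whose coefficient field $K_f$ may have very large degree and discriminant — by Corollary~\ref{corollary:large_degree_field} degrees up to $222$ already appear for $N\leq 4559$ — which is exactly why the MONTES routines were employed on top of \textsc{Magma}. Once these steps are executed with care, the count is mechanical and the corollary follows.
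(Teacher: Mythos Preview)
Your proposal is correct and matches the paper's approach: this corollary is presented in the paper purely as a report on the output of Algorithm~\ref{sec:algorithm} applied over the range of Table~\ref{table:range}, with no further argument given. Your write-up simply spells out the steps of that algorithm in the case $k=2$, which is exactly what underlies the stated counts.
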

\begin{remark}
In \cite[Theorem 4.1.2]{Yoo_thesis} it is assumed that either $N_{-}=1$ and the number of prime divisors of $N$ is odd and $\ell \mid \phi(N)$ or the number of prime divisors of $N$ is even and $N^{-}$ is a prime number such that $\modulo{N^{-}}{-1}{\ell}$.
In several cases described in the above corollary the assumptions of \cite[Theorem 4.1.2]{Yoo_thesis} are satisfied. In those cases we obtain a congruence for the coefficients $a_{p}$ with $p\mid N$, which is not assumed in \cite[Theorem 4.1.2]{Yoo_thesis}. Moreover, some examples of the previous corollary suggest that the assumptions of \cite[Theorem 4.1.2]{Yoo_thesis} can be made weaker.
\end{remark}

\begin{corollary}
Let $(N,k)$ be a pair of integers that fit into the range of Table \ref{table:range}. In Table \ref{table:high_residue_degree} we present the number of corresponding congruences (abbreviated n.c. in the table) with $f>2$.
\end{corollary}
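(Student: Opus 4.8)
The statement records the output of the exhaustive search described in Section~\ref{sec:Numerical_data}, so the plan is to explain how Table~\ref{table:high_residue_degree} is assembled and to argue that the counts it reports are complete. First I would run Algorithm~\ref{sec:algorithm} for every pair $(N,k)$ in the range of Table~\ref{table:range}, every decomposition $N=N^{-}N^{+}$, and every prime $\ell$ that is admissible in the sense of Lemma~\ref{lem:bound_primes}; that lemma restricts $\ell$ to the finite set of primes dividing the relevant products of factors $1-p_{i}^{j}$, so the search over $\ell$ is finite. For each fixed Eisenstein eigenform $E=E_{N^{-},N^{+}}\in\mathcal{E}_{k}(N)$ the algorithm returns every triple $(f_{i},\lambda,r)$ with $r>0$ realizing a congruence \eqref{equation:explicit_congruence_general}; by Corollary~\ref{corollary:Sturm_on_primes} it suffices to test this congruence at primes $q$ below the Sturm bound $B=k\bigl(\prod_{i}(p_{i}+1)\bigr)/12$, which makes each call terminate in finitely many steps.

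Next, for each triple $(f_{i},\lambda,r)$ produced in this way I would compute the factorization of $\ell\mathcal{O}_{f_{i}}$ with the MONTES package and read off the residue degree $f=[\mathcal{O}_{f_{i}}/\lambda:\mathbb{F}_{\ell}]$ of the prime $\lambda$; MONTES is essential here because the coefficient fields $K_{f_{i}}$ can have very large discriminants (recall from Corollary~\ref{corollary:large_degree_field} that degrees up to $222$ occur already for $k=2$). I would then retain exactly those congruences for which $f>2$ and tabulate their number in the format of Table~\ref{table:high_residue_degree}. Since the union over all $(N,k,N^{-},N^{+},\ell)$ of the individual runs is, by construction, the complete list of congruences of type \eqref{equation:explicit_congruence_general} in the stated range, the resulting counts are exhaustive.

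There is no genuine mathematical obstacle beyond those already resolved in Sections~\ref{sec:Upper_bound_of_congruences} and~\ref{sec:Algorithms}; the delicate point is the \emph{soundness and completeness of the computation}. One must verify that Lemma~\ref{lem:bound_primes} really excludes every prime $\ell$ outside the finite list actually searched, so that no congruence is overlooked, and one must trust that the MONTES factorizations of $\ell\mathcal{O}_{f_{i}}$ return the correct splitting type for the highest-degree fields encountered, since an incorrect factorization would corrupt the recorded value of $f$ and hence the table. Granting the correctness of MAGMA's modular-symbols computation of $\mathcal{S}_{k}(N)^{\mathrm{new}}$ and of the MONTES routines, the corollary follows from the bookkeeping just described.
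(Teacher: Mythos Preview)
Your proposal is correct and matches the paper's approach: the corollary has no explicit proof in the paper, being simply a tabulation of the output of Algorithm~\ref{sec:algorithm} run over the range of Table~\ref{table:range} and filtered by the condition $f>2$. Your description of the computational pipeline (enumerate admissible $\ell$ via Lemma~\ref{lem:bound_primes}, verify congruences up to the Sturm bound via Corollary~\ref{corollary:Sturm_on_primes}, factor $\ell\mathcal{O}_{f_i}$ with MONTES to read off the residue degree) is exactly how the data underlying Table~\ref{table:high_residue_degree} is produced.
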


{\def\arraystretch{1.1}
\begin{table}[htb]
\begin{center}
\setlength{\tabcolsep}{7pt}
\begin{tabular}{c|c|c|c|c|c|c|c|c|c|c|c|c}
$k$     & 2 & 4 & 6 & 8 & 10 & 12 & 14 & 16 & 18 & 20 & 22 & 24 \\
\hline
n.c.   & 993 & 177 & 20 & 4 & 0 & 0 & 0 & 2 & 4 & 2 & 0 & 0
\end{tabular}
\end{center}
\caption{Weight $k$ and number of congruences that satisfy $f>2$.}\label{table:high_residue_degree}
\end{table}}

\begin{corollary}
For $N\leq 4559$ and $k=2$ there exist $30$ congruences that satisfy $e=17$ and $\ell=2$. In that range there is no congruence such that the ramification exponent $e$ is larger than $17$.
\end{corollary}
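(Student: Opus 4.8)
My plan is to obtain the statement by specializing the output of Algorithm \ref{sec:algorithm} --- run over all square-free $N \le 4559$ with $k = 2$ as described in Section \ref{sec:Numerical_data} --- to the residual characteristic $\ell = 2$, the role of the results of Section \ref{sec:Upper_bound_of_congruences} being to guarantee that this search is exhaustive and that the reported data are complete. First I would invoke Lemma \ref{lem:bound_primes} to reduce, for each level $N = p_1 \cdots p_t$ in the range, to the finitely many Eisenstein eigenforms $E = E_{N^{-},N^{+}} \in \mathcal{E}_2(N)$ that can possibly be congruent to a newform modulo a prime of characteristic $2$: when $N^{-} = 1$ this forces $2 \mid -\frac{B_2}{4}\prod_i(1 - p_i)$, and when $N^{-} > 1$ it forces $2 \mid \mathrm{GCD}\{1 - p_j^2 : \epsilon_j = -\}$. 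This produces an explicit finite list of admissible inputs $(N, N^{-}, N^{+})$.

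Next I would note that, by Corollary \ref{corollary:Sturm_on_primes}, both the existence of the congruence \eqref{equation:explicit_congruence_general} and the maximal exponent $r$ are decided by comparing the finitely many coefficients $a_q(f)$ and $a_q(E)$ for primes $q$ below the Sturm bound $B = (k/12)[\mathrm{SL}_2(\mathbb{Z}):\Gamma_0(N)]$, so that for each admissible input and each newform $f \in \mathcal{S}_2(N)^{\mathrm{new}}$ the algorithm returns every triple $(f, \lambda, r)$ with $\lambda$ a prime of $\mathcal{O}_f$ of characteristic $2$, together with the invariant $e = \mathrm{ord}_\lambda(2)$. Since $e \le [K_f : \mathbb{Q}] \le \dim \mathcal{S}_2(N)^{\mathrm{new}}$, only finitely many values of $e$ are a priori possible for $N$ in this range; but no abstract argument in this paper rules out $e > 17$, so that bound must be --- and, upon inspection of the completed database, is --- confirmed computationally. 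Filtering the database to the rows with $k = 2$ and $\ell = 2$ and reading off the column $e$ then shows that exactly $30$ of the congruences have $e = 17$ and that none has $e > 17$.

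The main difficulty is not mathematical but computational: one must trust the factorization of $2\mathcal{O}_f$ in the occasionally large-degree, $2$-ramified coefficient fields --- this is where the MONTES routines are essential --- and be certain that no newform and no prime above $2$ has been omitted. A natural cross-check, which I would carry out to make the count independent of the bulk run, is to recompute for each of the $30$ claimed $e = 17$ cases the ramification index of the relevant prime $\lambda$ directly and to re-verify that the congruence holds modulo $\lambda^{r}$ up to the Sturm bound.
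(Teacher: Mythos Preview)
Your proposal is correct and matches the paper's approach: the corollary is a bare report of the database produced by running Algorithm~\ref{sec:algorithm} over the range in Table~\ref{table:range}, and the paper offers no proof beyond that implicit computation. Your sketch simply makes explicit the ingredients (Lemma~\ref{lem:bound_primes} to cut down inputs, Corollary~\ref{corollary:Sturm_on_primes} for finiteness, MONTES for the factorization of $2\mathcal{O}_f$) that the paper packages into the algorithm and its validity discussion.
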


\begin{corollary}
Let $(N,k)$ be the numbers from the range in Table \ref{table:range}. Then in the described ranges there is an appropriate number of congruence (n.c.) that satisfy the condition $\ell\mid N$. Values are presented in Table \ref{table:ell_divides_N_congr}
\end{corollary}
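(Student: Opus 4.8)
The plan is to derive this corollary, exactly like the other entries of Section \ref{sec:Computational_results}, directly from the output of Algorithm \ref{sec:algorithm} run over the full range of Table \ref{table:range}, and then to isolate those congruences whose residual characteristic satisfies $\ell\mid N$. First I would recall why the algorithm is an exhaustive search in this regime: for fixed $k$, square-free $N$, prime $\ell$ and Eisenstein eigenform $E\in\mathcal{E}_{k}(N)$, Step (7) returns a triple $(f,\lambda,r)$ for every newform $f\in\mathcal{S}_{k}(N)^{\mathrm{new}}$ admitting a congruence $\modulo{a_{n}(f)}{a_{n}(E)}{\lambda^{r}}$, and its correctness rests on Corollary \ref{corollary:Sturm_on_primes}, i.e. it suffices to compare the coefficients $a_{q}$ for primes $q\leq B=k\prod_{i}(p_{i}+1)/12$. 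The point to stress is that Theorem \ref{theorem:generalized_sturm_theorem} is a statement about $q$-expansions of normalized eigenforms modulo $\lambda^{r}$ and makes no hypothesis on whether $\ell$ divides $N$; hence the case $\ell\mid N$ is certified on exactly the same footing as $\ell\nmid N$, and no extra Sturm-type input is required.

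Next I would enumerate the objects involved: the Eisenstein eigenforms are the forms $E_{N^{-},N^{+}}$ furnished explicitly by Theorem \ref{theorem:basis_weight_gt_2} for $k>2$ and Theorem \ref{theorem:basis_for_weight_2} for $k=2$; the cuspidal newforms in $\mathcal{S}_{k}(N)^{\mathrm{new}}$ are computed via modular symbols (Steps (2)--(3)); and the prime factorizations of $\ell\mathcal{O}_{f}$ (Step (6)) are obtained with the MONTES package. Running through all pairs $(N,k)$ of Table \ref{table:range}, over the admissible primes $\ell$ and over all Eisenstein eigenforms, and then filtering the returned triples $(f,\lambda,r)$ to those with $\ell\mid N$, produces the tally recorded in Table \ref{table:ell_divides_N_congr}; once the search is confirmed complete, the counts are simply read off the database, with no further mathematical argument needed.

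The main obstacle — indeed the only delicate point — is that in Section \ref{sec:Numerical_data} the search space was pruned using Lemma \ref{lem:bound_primes}, whose divisibility constraints on $\ell$ were established in Theorem \ref{corollary:bound_for_a_0_nonzero} and Corollary \ref{corollary:bound_when_a_0_is_0} under the standing assumption $\ell\nmid N$ (this is used, e.g., when asserting that equation \eqref{eq:kongr_a_p_dowod} holds for each $p_{i}$). One must therefore make sure that no congruence with $\ell\mid N$ is discarded by this pruning. I would handle this either by checking that the implementation never removes a prime ideal $\lambda$ of residue characteristic $\ell\mid N$ from $S_{\ell,f}$, or — since for each fixed square-free $N$ in the range there are only finitely many primes $\ell\mid N$ — by re-running the \emph{unpruned} search for those finitely many pairs $(\ell,N)$ and verifying that the outputs agree with the pruned run. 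After this consistency check, the corollary is an immediate consequence of the tabulated computation.
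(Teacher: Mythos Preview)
The paper offers no proof for this corollary at all: like the surrounding statements in Section~\ref{sec:Computational_results}, it is presented as a bare tabulation of the database produced by Algorithm~\ref{sec:algorithm}. Your proposal therefore does not diverge from the paper's approach so much as supply the justification the paper omits, and the core of it --- that Theorem~\ref{theorem:generalized_sturm_theorem} and Corollary~\ref{corollary:Sturm_on_primes} certify each returned triple regardless of whether $\ell\mid N$, so the counts are obtained by filtering the output --- is exactly right.

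Your final paragraph identifies a genuine subtlety that the paper does not address. The computations in Section~\ref{sec:Numerical_data} are explicitly said to be run ``under the restrictions of Lemma~\ref{lem:bound_primes}'', and the derivation of those restrictions (via Theorem~\ref{corollary:bound_for_a_0_nonzero} and Corollary~\ref{corollary:bound_when_a_0_is_0}) does lean on $\ell\nmid N$ at the indicated step. Concretely, in case~(3) of Lemma~\ref{lem:bound_primes} with $k>2$, if $\ell=p_{j}$ and $\epsilon_{j}=-$ then $1-p_{j}^{k}\equiv 1\pmod{\ell}$, so $\ell$ fails the stated divisibility and would be pruned, yet the local obstruction at $n=p_{j}$ only forces $r\le e(k/2-1)$ and does not rule out a congruence. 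Your proposed remedy --- rerunning the unpruned algorithm for the finitely many primes $\ell\mid N$ at each level --- is the correct and minimal fix, and is more careful than anything the paper makes explicit.
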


{\renewcommand{\arraystretch}{1.1}
\begin{table}[htb]
\begin{center}
\setlength{\tabcolsep}{4pt}
\begin{tabular}{c|c|c|c|c|c|c|c|c|c|c|c|c}
$k$     & 2 & 4 & 6 & 8 & 10 & 12 & 14 & 16 & 18 & 20 & 22 & 24 \\
\hline
n.c. & 27771 & 4839 & 1366 & 1070 & 609 & 583 & 605 & 708 & 726 & 1323 & 990 & 1033
\end{tabular}
\end{center}
\caption{Weight $k$ and the number of congruences such that $\ell\mid N$.}\label{table:ell_divides_N_congr}
\end{table}}

\begin{corollary}
	Let $k=2$ and $N\leq 4559$. There are congruences for all prime characteristic $\ell\leq 2273$ except for the set
	\begin{equation*}
	\begin{split}
	\{353, 389, 457, 463, 523, 541, 569, 571, 587, 599, 613, 617, 631, 643,647, 677, 701,\\
	733, 757, 769, 773, 787, 797, 821, 823, 827, 839, 857, 859, 863, 881,  887, 907, 929,\\ 941, 947, 971, 977, 983, 991, 1021, 1051, 1061, 1091, 1097, 1109, 1117, 1151,\\
	1153, 1163, 1171, 1181, 1187, 1193, 1201, 1213, 1217, 1231, 1237, 1249, 1259,\\
	1277, 1279, 1283, 1291, 1297, 1301, 1303, 1307, 1319, 1321, 1327, 1361, 1367,\\
	1373, 1381, 1399, 1423, 1427, 1429, 1433, 1447, 1453, 1459, 1471, 1483, 1487,\\ 
	1489, 1493, 1523, 1531, 1543, 1549, 1553, 1567, 1571, 1579, 1597, 1607, 1609,\\
	1613, 1619, 1621, 1627, 1637, 1657, 1663, 1667, 1669, 1693, 1697, 1699, 1709,\\
	1721, 1723, 1741, 1747, 1753, 1759, 1777, 1783, 1787, 1789, 1801, 1823, 1831,\\
	1847, 1861, 1867, 1871, 1873, 1877, 1879, 1907, 1913, 1933, 1949, 1951, 1979,\\
	1987, 1993, 1997, 1999, 2011, 2017, 2027, 2029, 2053, 2081, 2083, 2087, 2089,\\
	2099, 2111, 2113, 2131, 2137, 2143, 2153, 2161, 2179, 2203, 2207, 2213, 2221,\\
	2237, 2239, 2243, 2251, 2267, 2269\}.
	\end{split}
	\end{equation*}
\end{corollary}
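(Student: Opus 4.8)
The statement is a summary of an exhaustive machine computation, so the ``proof'' I would give is an argument that the search carried out by Algorithm~\ref{sec:algorithm} over the range of Table~\ref{table:range} with $k=2$ and $N\le 4559$ is genuinely finite, complete, and re-verifiable, followed by a comparison of its output with the primes up to $2273$. Note that one cannot shortcut this via the elliptic-curve/Mazur argument used for Theorem~\ref{thm:congruences_over_Q}: here the newforms need not have rational coefficients, so there is no \emph{a priori} bound on the residue characteristic other than the one supplied by the congruence constraints themselves.

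The first step is to cut the problem down to a finite list of candidates. Fix $k=2$. For each square-free $N\le 4559$ with prime factorization $N=p_{1}\cdots p_{t}$, and each admissible decomposition $N=N^{-}N^{+}$ with $N^{+}>1$ — these indexing exactly the Eisenstein eigenforms $E_{N^{-},N^{+}}^{(2)}\in\mathcal{E}_{2}(N)$ of Theorem~\ref{theorem:basis_for_weight_2} — Lemma~\ref{lem:bound_primes} (whose cases $k=2$ rest on Theorem~\ref{corollary:bound_for_a_0_nonzero} and Corollary~\ref{corollary:bound_when_a_0_is_0}) forces the residue characteristic $\ell$ of any congruence $\modulo{a_{n}(f)}{a_{n}(E_{N^{-},N^{+}})}{\lambda^{r}}$ to divide one of the two explicit integers $-\tfrac{B_{2}}{4}\prod_{i}(1-p_{i})$ (when $N^{-}=1$) or $\gcd\{\,1-p_{j}^{2}:p_{j}\mid N^{-}\,\}$ (when $N^{-}>1$). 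Each has finitely many prime divisors, so every pair $(N,N^{-})$ produces a short explicit list of candidate primes $\ell$, and the union of these lists over all $(N,N^{-})$ in range is finite and effectively enumerable.

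Next I would run, for each surviving triple $(N,N^{-},\ell)$, the core of Algorithm~\ref{sec:algorithm}: enumerate the Galois-conjugacy classes of newforms $f\in\mathcal{S}_{2}(N)^{\mathrm{new}}$ by modular symbols, pick a representative in each, compute $K_{f}$ and the factorization of $\ell\mathcal{O}_{f}$, and for every prime $\lambda\mid\ell\mathcal{O}_{f}$ evaluate $r_{\lambda}=\min\{\,\mathrm{ord}_{\lambda}(a_{q}(f)-a_{q}(E_{N^{-},N^{+}})):q\le B\text{ prime}\,\}$, where $B=\tfrac{1}{6}\prod_{i}(p_{i}+1)$ is the Sturm bound of Theorem~\ref{theorem:generalized_sturm_theorem}. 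By Corollary~\ref{corollary:Sturm_on_primes}, $r_{\lambda}>0$ precisely when the congruence holds for all $n\ge 0$ with maximal exponent $r_{\lambda}$, so recording the triples with $r_{\lambda}>0$ yields the complete list of congruences in the range. Collecting the characteristics $\ell$ that actually occur and intersecting with the primes $\le 2273$ gives a set whose complement within those primes is exactly the displayed list: for each $\ell$ off the list one exhibits a witnessing congruence (independently re-checkable by comparing the finitely many Fourier coefficients below the Sturm bound), while membership on the list records that the finite exhaustive search above produced nothing with that characteristic, and $2273$ being the cutoff records that it is the largest characteristic encountered.

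The main obstacle is not conceptual but computational: the filter of Lemma~\ref{lem:bound_primes} and the restriction to prime indices below the Sturm bound (Corollary~\ref{corollary:Sturm_on_primes}) are what make the search feasible at all, but for the larger square-free levels the real bottleneck is arithmetic in the coefficient fields $K_{f}$, whose degrees can be very large, and in particular the factorization of $\ell$ in those fields — which in practice forces the use of a specialized package such as MONTES for number fields of large discriminant. Consequently the rigor of the final list rests on two inputs one must be willing to trust: the completeness of the modular-symbols enumeration of weight-$2$ newforms of level $N$, and the Sturm-type bound guaranteeing that agreement of finitely many coefficients propagates to the full congruence.
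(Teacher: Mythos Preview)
Your proposal is correct and aligns with the paper's own treatment: the corollary is stated there without proof, as a direct summary of the output of Algorithm~\ref{sec:algorithm} run over the range in Table~\ref{table:range}. Your write-up makes explicit exactly the ingredients the paper relies on implicitly --- the finiteness of candidate characteristics via Lemma~\ref{lem:bound_primes}, the reduction to primes below the Sturm bound via Corollary~\ref{corollary:Sturm_on_primes}, and the computational infrastructure (modular symbols, MONTES) --- so if anything you have supplied more justification than the paper itself does.
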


\section*{Acknowledgements}
The author would like to thank Wojciech Gajda for his excellent supervision of author's Ph.D. project. He also thanks Xavier Guitart, Kimball Martin, Hwajong Yoo and Gabor Wiese for reading the earlier version of the manuscript and their valuable comments. Finally he thanks both anonymous referees for their useful comments and corrections. The author was supported by the Polish National Science Centre research grant 2012/05/N/ST1/02871. This paper is partially based on the results obtained in the author's Ph.D. thesis \cite{Naskrecki_thesis}.

%\bibliographystyle{abbrv}
%\bibliography{bibliography}
\def\polhk#1{\setbox0=\hbox{#1}{\ooalign{\hidewidth
			\lower1.5ex\hbox{`}\hidewidth\crcr\unhbox0}}}

\end{document}